\numberwithin{equation}{section}
\tikzset{
>=stealth',
  punktchain/.style={
    rectangle,
    rounded corners,
    draw=black, thick,
    minimum height=3em,
    text centered,
    on chain},
  line/.style={draw, thick, <-},
  element/.style={
    tape,
    top color=white,
    bottom color=blue!50!black!60!,
    minimum width=8em,
    draw=blue!40!black!90, very thick,
    text width=10em,
    minimum height=3.5em,
    text centered,
    on chain},
  every join/.style={->, thick,shorten >=1pt},
  decoration={brace},
  tuborg/.style={decorate},
  tubnode/.style={midway, right=2pt},
}
\setlist[enumerate,1]{label={\rm(\alph*)}, ref={\rm\alph*}} 
\newtheorem*{rep@theorem}{\rep@title}
\newcommand{\newreptheorem}[2]{%
\newenvironment{rep#1}[1]{%
 \def\rep@title{#2 \ref{##1}}%
 \begin{rep@theorem}}%
 {\end{rep@theorem}}}
\newtheorem{Thm}{Theorem}[section]
\newtheorem{Thm*}{Theorem}
\newtheorem{Prop}[Thm]{Proposition}
\newtheorem{Lem}[Thm]{Lemma}
\newtheorem{Con}[Thm]{Conjecture}
\newtheorem{Conj}{Conjecture}
\newtheorem{thm-int}{Theorem}
\theoremstyle{definition}
\newtheorem{Def-s}[Thm]{Definition}
\newtheorem{Def}[Thm]{Definition}
\theoremstyle{remark}
\newtheorem{Rem}[Thm]{Remark}
\renewcommand\;{\hspace{.7pt}}
\renewcommand\_{^{}_}
\newcommand\To{\longrightarrow}
\newcommand\into{\hookrightarrow}
\newcommand{\Into}{\ensuremath{\lhook\joinrel\relbar\joinrel\rightarrow}}
\newcommand\Onto{\longrightarrow\hspace{-5.5mm}\longrightarrow}
\newcommand\PP{\mathbb P}
\newcommand\C{\mathbb C}
\newcommand\Q{\mathbb Q}
\newcommand\R{\mathbb R}
\newcommand\Z{\mathbb Z}
\newcommand\m{\mathfrak m}
\newcommand\cA{\mathcal A}
\newcommand\cD{\mathcal D}
\newcommand\cH{\mathcal H}
\newcommand\cO{\mathcal O}
\newcommand\cI{\mathcal I}
\newcommand\cL{\mathcal L}
\newcommand\cF{\mathcal F}
\newcommand\cG{\mathcal G}
\newcommand\cT{\mathcal T}
\newcommand\ch{\operatorname{ch}}
\newcommand\Aut{\operatorname{Aut}}
\newcommand\Sym{\operatorname{Sym}}
\newcommand\Pic{\operatorname{Pic}}
\newcommand\Ext{\operatorname{Ext}}
\newcommand\rk{\operatorname{rank}}
\newcommand\cok{\operatorname{cok}}
\newcommand\js{\operatorname{JS}}
\newcommand\supp{\operatorname{supp}}
\newcommand\vi{v}
\newcommand\ext{\mathcal E\hspace{-1pt}xt}
\newcommand\dbar{\overline\partial}
\renewcommand\={\ =\ }
\def\abs#1{\left\lvert#1\right\rvert}
\newcommand{\ignore}[1]{}
\def\isolow{\vbox to 0pt{\vss\hbox{$\scriptstyle\sim$}\vskip-1.5pt}}
\newcommand\tors{\operatorname{tors}}
\newcommand\vir{\operatorname{vir}}
\newcommand\slope{\operatorname{slope}}
\newcommand\prim{\operatorname{prim}}
\newcommand\im{\operatorname{im}}
\newcommand\codim{\operatorname{codim}}
\newcommand\Hilb{\operatorname{Hilb}}
\newcommand\disc{\operatorname{disc}}
\newcommand\Spec{\operatorname{Spec}}
\title{Curve counting and S-duality}
\author{S.~Feyzbakhsh}
\address{Department of Mathematics, Imperial College, London SW7 2AZ, United Kingdom}
\email{s.feyzbakhsh@imperial.ac.uk}
\author{R.\,P.~Thomas}
\address{Department of Mathematics, Imperial College, London SW7 2AZ, United Kingdom}
\email{richard.thomas@imperial.ac.uk}
\begin{document}


\maketitle

\begin{prelims}

\DisplayAbstractInEnglish

\bigskip

\DisplayKeyWords

\medskip

\DisplayMSCclass

\end{prelims}


\newpage

\setcounter{tocdepth}{1}

\tableofcontents


\section{Introduction}

A naive way to relate different moduli spaces of coherent sheaves on an algebraic variety is to replace sheaves by cokernels of their sections (after twisting by an appropriately positive line bundle to ensure the sheaves have predictable numbers of sections).
This would give useful relations between enumerative invariants counting sheaves, but it very rarely works due to stability issues.

This is one of a series of papers \cite{F22, FT, rk2, FT3} showing it can be made to work --- modulo the wall-crossing formulae required to move to a stability condition for which such cokernels are stable --- on a threefold $X$ satisfying (a weakening of) Bayer--Macr\`i--Toda's Bogomolov--Gieseker conjecture \cite{BMT}. When $X$ is Calabi--Yau, this gives relations between invariants involving unwieldy formulae. In this paper we concentrate on the one case where the formulae are very simple. They equate the (virtual) counting of ideal sheaves (of curves and points) with the counting of 2-dimensional pure sheaves,
\begin{equation}\label{hash}
\#(\text{ideal sheaves})\=c\cdot\#(\text{2-dimensional pure sheaves}),
\end{equation}
with $c$ an explicit topological constant. The formula reflects the fact that in this case we are ultimately able to prove (by crossing many walls) that no wall-crossing is necessary --- the cokernels are stable and are the only stable sheaves of the same topological type. Thus the relationship between the moduli spaces is also very simple, with one being a projective bundle over the other. We explain below why this is so surprising (to us).

The counts of ideal sheaves on the left-hand side of \eqref{hash} are heavily studied, being equivalent, via the MNOP conjecture \cite{MNOP}, to the Gromov--Witten theory of $X$. On the right-hand side, we have counts of ``D4-D2-D0 branes'', which the string-theoretic ``S-duality conjecture'' predicts are coefficients of (vector-valued mock) modular forms. Thus \eqref{hash} opens up the potential of governing the Gromov--Witten theory of $X$ by modular forms. This is discussed in Section~\ref{modular}, alongside a conjectural approach to proving S-duality geometrically using Noether--Lefschetz theory.

In the sequel \cite{rk2, FT3} we apply the same methods to DT invariants counting higher-rank sheaves. Here the wall-crossing formulae are much more complicated, but ultimately express arbitrary rank DT invariants in terms of the same data. That is, we can express them in terms of the rank 1 invariants on the left-hand side of \eqref{hash} (or, by the MNOP conjecture, the Gromov--Witten invariants of $X$). Or we can express them in terms of the (conjecturally modular) rank 0 invariants counting D4-D2-D0 branes on the right-hand side of \eqref{hash}.

\subsection*{Statement of results} Let $(X, \cO(1))$ be a smooth polarised complex projective threefold. Gieseker and slope (semi)stability of sheaves will always be defined by $H:=c_1(\cO(1))$.

Fix $\beta$ in the image of $H^4(X,\Z)\to H^4(X,\Q)$ and $m\in\Z$. Let $I_m(X,\beta)$ denote the Hilbert scheme of subschemes $C\subset X$ of dimension at most 1
and topological type
$$
\ch_2(\cO_C)\ =\ [C]\ =\ \beta \quad\text{and}\quad \ch_3(\cO_C)\ =\ m.
$$
It is the moduli space of ideal sheaves $I_C$. Torsion-free sheaves of the same Chern character
$$
v\ :=\ (1, 0,-\beta, -m)\ \in\ \textstyle{\bigoplus_{i=0\,}^3}H^{2i}(X,\Q)
$$
are all of the form $I_C\otimes T$, where $T$ is a line bundle with torsion $c_1(T)\in H^2(X,\Z)$. Their moduli space is $I_m(X,\beta)\times\Pic\_0(X)$, where
$$
\Pic\_0(X)\ :=\ \left\{T\in\Pic(X)\ \colon\ c_1(T)\,=\,0\,\in\,H^2(X,\Q)\right\}.
$$
For $n\gg0$ the generic section
\begin{equation}\label{secshun}
s\,\colon\,\mathcal{O}_X(-n) \To I_C\otimes T
\end{equation}
has cokernel a rank 0 Gieseker semistable 2-dimensional sheaf $\cok(s)$ of Chern character
\begin{equation}\label{vn}
\vi_n\ :=\ v-\ch(\cO(-n))\=\left(0,\, nH,\, -\beta -\frac12n^2H^2 ,\, -m + \frac16n^3H^3\right).
\end{equation}
Let $M_{X,H}(\vi_n)$ denote the moduli space of Gieseker semistable sheaves of class $v_n$.

\begin{Thm*}\label{theorem.1}
For $X$ satisfying the Bogomolov--Gieseker conjecture of \,\cite{BMT} and $n\gg0$,
\begin{itemize}
\item $\cok(s)\otimes L$ is slope and Gieseker stable for any $s\ne0$ and any $L\in\Pic\_0(X)$;
\item all points of $M_{X,H}(\vi_n)$ are of the form $\cok(s)\otimes L$ for unique data $((I_C\otimes T,s),L)$;
\item mapping $\cok(s)\otimes L$ to $(I_C,T,L)$ defines a morphism
$$
M_{X,H}(\vi_n)\To I_m(X,\beta)\times\Pic\_0(X)\times\Pic\_0(X)
$$
which is a smooth projective bundle with fibre $\PP^{\;\chi(v(n))-1}$.
\end{itemize}
\end{Thm*}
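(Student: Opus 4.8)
The plan is to realise $M_{X,H}(\vi_n)$ as a moduli space of Bridgeland-stable objects and to describe it through a single wall-crossing. For $s\ne0$ the section of \eqref{secshun} gives a short exact sequence of sheaves
\[
0\To\cO(-n)\xrightarrow{\ s\ }I_C\otimes T\To\cok(s)\To0,
\]
equivalently a triangle $I_C\otimes T\to\cok(s)\to\cO(-n)[1]$ in $D^b(X)$ whose extension class is $s\in\Hom(\cO(-n),I_C\otimes T)=H^0\big(I_C\otimes T(n)\big)$. Since $X$ satisfies the conjecture of \cite{BMT}, there is a family of Bridgeland stability conditions $\sigma=\sigma_{\alpha,b}$ on $D^b(X)$ built by double tilting. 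First I would locate, for the Chern characters $\ch(\cO(-n)[1])$ and $v$, the numerical wall $W$ on which the central charges of $\cO(-n)[1]$ and of any object of class $v$ are aligned, and fix $\sigma$ on $W$ and just off it.

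The heart of the matter is to show that on $W$ the classes $v$ and $\ch(\cO(-n)[1])$ carry, up to torsion twists, exactly one $\sigma$-stable object each, and that these are the only Jordan--Hölder factors an object of class $\vi_n$ can acquire there. Concretely I would prove that for $n\gg0$: (i) $\cO(-n)\otimes L[1]$ is $\sigma$-stable for every $L\in\Pic_0(X)$; (ii) every $\sigma$-stable object of class $v$ is of the form $I_C\otimes T$; and (iii) the phases satisfy $\phi(I_C\otimes T)=\phi(\cO(-n)[1])=\phi(\vi_n)$ on $W$, with $\phi(I_C\otimes T)<\phi(\cO(-n)[1])$ just off $W$ on the stable side. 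The essential input is the conjectural Bogomolov--Gieseker inequality of \cite{BMT}: it bounds the discriminant and $\ch_3$ of any putative sub- or quotient-object, ruling out every competing destabiliser and leaving only the decomposition $\vi_n=v+\ch(\cO(-n)[1])$. Granting this, an object of class $\vi_n$ in the abelian category of $\sigma$-semistable objects of phase $\phi$ on $W$ has a two-step filtration with factors $I_C\otimes T$ and $\cO(-n)\otimes L[1]$, each of multiplicity one; it is $\sigma$-stable just off $W$ precisely when the extension class is nonzero. Twisting by $L^{-1}$ normalises the shifted factor to $\cO(-n)[1]$ and exhibits the object as $\cok(s)\otimes L$, with $(I_C\otimes T,s)$ recovered from the canonical sub-object and $L$ from the torsion twist of the shifted factor; since $\cok(\lambda s)=\cok(s)$, the section is determined only up to scale, and uniqueness of the data $\big((I_C\otimes T,s),L\big)$ is exactly uniqueness of Jordan--Hölder factors on $W$. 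The same analysis controls slope-destabilisers of $\cok(s)\otimes L$, giving slope stability (hence Gieseker stability) in the first bullet. To identify $M_{X,H}(\vi_n)$ with this Bridgeland moduli space I would then run a large-volume comparison as $\alpha\to\infty$, checking there is no further wall for $\vi_n$ between $W$ and the limit, so that $\sigma$-stability on the stable side of $W$ coincides with Gieseker stability of a sheaf of class $\vi_n$. I expect step (ii) together with the no-other-destabiliser and no-further-wall claims to be the main obstacle, since these are where the \cite{BMT} inequality must be turned into effective numerical bounds valid for all $n\gg0$.

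For the last bullet I would globalise over $B:=I_m(X,\beta)\times\Pic_0(X)\times\Pic_0(X)$. Let $\mathcal I\otimes\mathcal T$ be the universal twisted ideal sheaf pulled back from the first two factors and $\pi\colon X\times B\to B$ the projection. For $n\gg0$ the relative $\Hom$-sheaf $\pi_*\big((\mathcal I\otimes\mathcal T)(n)\big)$ is, by Serre vanishing and cohomology-and-base-change, locally free of rank $\chi(v(n))=\chi\big(I_C\otimes T(n)\big)=\dim H^0\big(I_C\otimes T(n)\big)$, the higher direct images vanishing. Its projectivisation $\PP\big(\pi_*((\mathcal I\otimes\mathcal T)(n))\big)$ is a $\PP^{\,\chi(v(n))-1}$-bundle over $B$ whose points are the normalised data $\big((I_C\otimes T,[s]),L\big)$. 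The universal cokernel, twisted by the universal $L$, is a flat family of $\sigma$-stable, hence Gieseker-stable, sheaves of class $\vi_n$, and by the fibrewise bijection above it induces the asserted morphism from $M_{X,H}(\vi_n)$; matching it with the projectivised $\Hom$-bundle shows the morphism is an isomorphism onto that bundle, in particular a smooth projective $\PP^{\,\chi(v(n))-1}$-bundle. The only points then needing care are the existence of the universal families (which holds after the usual twist, the stable sheaves being simple) and the flatness of the universal cokernel, both routine once the fibrewise bijection is in place.
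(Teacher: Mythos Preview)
Your overall architecture matches the paper's: locate a single wall for the class $\vi_n$, show that on it every semistable object is an extension of $\cO(-n)\otimes L[1]$ by some $I_C\otimes T$, and then globalise. Two points deserve comment.

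First, a minor difference: the paper works with \emph{weak} (tilt) stability conditions on the once-tilted heart $\cA(b)$, not with full Bridgeland stability conditions from a double tilt. This is slightly more economical and is all that is needed. More substantively, you correctly identify the ``no other destabiliser'' step as the main obstacle but do not indicate how to carry it out. The paper's argument is considerably more involved than a single application of the Bogomolov--Gieseker inequality at a point of the wall: after locating the wall $\ell$ through $(b_0,w_0)$, one runs the destabilising objects $F_1,F_2$ along the whole segment $\ell\cap U$ to force $\rk F_1=1$ and $\ch_1(F_1).H^2=0$; then one moves down a \emph{different} vertical line $b=b'=-\tfrac1{H^3}$ (where rank-one objects with $\ch_1.H^2=0$ have no walls) to apply the BG inequality to $F_1$ and to $F_2(n)$ separately at carefully chosen points. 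Only by combining these two bounds does one pin down $\ch_2(F_1).H=-\beta.H$ exactly and then identify $F_2\cong L(-n)[1]$. Your sketch does not contain this two-line manoeuvre, and without it one cannot exclude destabilisers with nearby Chern characters.

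Second, and more seriously, your last paragraph constructs the morphism in the wrong direction and then asserts invertibility from a set-theoretic bijection. The universal cokernel over $\PP\big(\pi_*((\mathcal I\otimes\mathcal T)(n))\big)\times\Pic_0(X)$ classifies a family of stable sheaves, hence gives a morphism \emph{to} $M_{X,H}(\vi_n)$; but the theorem asserts a morphism \emph{from} $M_{X,H}(\vi_n)$ to the base, and a fibrewise bijection does not produce one. Since $M_{X,H}(\vi_n)$ is not known a priori to be normal (or even reduced), one cannot invoke Zariski's main theorem. The paper instead constructs the inverse directly: on $M_{X,H}(\vi_n)\times\Pic_0(X)$ it forms the twisted sheaf $\cG=R^2\pi_*\big(\cF\otimes\cL^*(n)\otimes K_X\big)$, whose fibre at $(F,L)$ is $\Ext^1(F,L(-n))^*$, and shows that its support $S_\cG$ is scheme-theoretically a section of the projection to $M_{X,H}(\vi_n)$. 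This requires checking that the obstruction map $\Ext^1(L(-n),L(-n))\to\Ext^2(F,L(-n))$ is an isomorphism (so the fibre is a reduced point) and then an inductive thickening argument over Artinian neighbourhoods. Only then can one untwist $\cF$, trivialise the relative $\ext^1$, and build a universal extension defining the map $M_{X,H}(\vi_n)\to\js_n(\vi)\times\Pic_0(X)$. This step is genuinely missing from your proposal.
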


We use the \emph{weak stability conditions} of Bayer--Macr\`{i}--Toda \cite{BMT} on the bounded derived category of coherent sheaves $\cD(X)$. Their conjectural Bogomolov--Gieseker inequality for weak semistable objects is the key to proving the existence of Bridgeland stability conditions on $\cD(X)$, and is now known to hold for many threefolds, see \cite{BMS, KosekiAb, Kosekinef, Ko20, Li, Li.Fano, MP, Ma, ScQ}, such as $\PP^3$ or the quintic threefold. In fact we only need its weakening stated in Conjecture~\ref{BG} below.

For weak stability conditions in the large volume limit, semistable objects $F$ of Chern character $\vi_n$ are slope semistable sheaves, giving the moduli space $M_{X,H}(\vi_n)$. Several applications of the Bogomolov--Gieseker inequality show there is a single wall of instability for the class $v_n$. Below this there are no semistable objects, while on the wall $F$ is destabilised by an exact triangle
$$
I_C\otimes T\otimes L\To F\To L(-n)[1]
$$
expressing it as the cokernel of a section \eqref{secshun} tensored by $L\in\Pic\_0(X)$.

%
%
%
%
%
%
%
%
%

By \cite{GST} such sheaves $F=\cok(s)\otimes L$ are all of the form
\begin{equation}\label{form}
\iota_*(\cI_C\otimes L')
\end{equation}
for some subscheme $C$ of a divisor $\iota\colon D\into X$ in the linear system $|T(n)|$ and $L'=T\otimes L$ in $\Pic\_0(X)$. (Here $\cI_C$ denotes the ideal sheaf of $C$ on $D$ rather than $X$, and the $H_2$ class of the pushforward of $[C]$ to $X$ is $\beta$.) So, remarkably, such sheaves are always Gieseker and slope stable, and \emph{these are the only sheaves} in $M_{X,H}(\vi_n)$.

In other words, semistable sheaves with Chern character $\vi_n$ \emph{must have rank $1$ on their support}. This came as a surprise to us. The support $D$ can have reducible components which are non-reduced. For instance, consider one of the form $D=D_1\cup rD_2$ for $r>1$. Then there are sheaves with Chern character $\vi_n$ which have support on the reduced divisor $D_1\cup D_2$ but rank $r$ on $D_2$. Theorem~\ref{theorem.1} says they are necessarily \emph{unstable}.\footnote{Of course, stable sheaves do exist with rank $r>1$ on $D_2$, but if they have the same $\ch_1,\,\ch_2$ as $v_n$, then by Theorem~\ref{theorem.1} they have $\ch_3<-m+\frac16n^3H^3$.} (Furthermore, there is a \emph{unique} $L'\in\Pic\_0(X)$ such that after $\otimes(L')^*$, this rank 1 sheaf on $D$ is \emph{anti-effective}: its dual has a section. In particular, if $\beta\cdot H<0$, then $M_{X,H}(\vi_n)$ must be empty.) We do not know of other situations in which whole series of different moduli spaces of sheaves have been systematically shown to have such a simple geometric relation as in Theorem~\ref{theorem.1}.

\medskip
Suppose now that $X$ is a Calabi--Yau threefold: $K_X\cong\cO_X$ and $H^1(\cO_X)=0$. Since semistability equals stability for our moduli spaces, they carry symmetric obstruction theories and virtual cycles of dimension 0, see \cite{HT, MNOP, Th}, and degrees\vspace{1mm}
\begin{itemize}
\setlength\itemsep{5pt}
\item $I_{m,\beta}(X)\,:=\,\int_{[I_m(X,\beta)]^{\vir}}1\in\Z$,
\item $\Omega_{\vi_n}(X)\,:=\,\int_{[M_{X,H}(\vi_n)]^{\vir}}1\in\Z$.
\end{itemize}
The first is the count of (ideal sheaves of) curves and points on $X$ conjectured by Maulik--Nekrasov--Okounkov--Pandharipande to be equivalent to the Gromov--Witten theory of $X$; see \cite{MNOP}. This MNOP conjecture has now been proved for most Calabi--Yau threefolds by Pandharipande and Pixton \cite{PP}.
The second counts D4-D2-D0-branes, or 2-dimensional torsion sheaves, and is subject to the famous S-duality conjectures of physicists.

Write these invariants as Behrend-weighted Euler characteristics; see \cite{Be}. As Theorem~\ref{theorem.1} gives a smooth fibration $M_{X,H}(v_n)\to I_m(X,\beta)$ with fibres of signed Euler characteristic
$$
e_n\ :=\ (-1)^{\chi(\vi(n))-1}\chi(\vi(n))\left(\#H^2(X,\Z)_{\tors}\right)^2,
$$
an immediate corollary is the following precise form of equation \eqref{hash}.

\begin{Thm*} \label{Theorem 2}
Fix $\beta,\,m$, then $n\gg0$. Suppose Conjecture~\ref{BG} holds on $X$. Then
\begin{equation} \label{Theorem 2 equality}
    \Omega_{\vi_n}(X)\ =\ e_n\cdot I_{m,\beta}(X).
\end{equation}
\end{Thm*}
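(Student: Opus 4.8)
The plan is to deduce \eqref{Theorem 2 equality} directly from Theorem~\ref{theorem.1} together with Behrend's description of these virtual counts as weighted Euler characteristics. First I would invoke \cite{Be} to write
$$
I_{m,\beta}(X)\=\chi\big(I_m(X,\beta),\nu_I\big),\qquad
\Omega_{\vi_n}(X)\=\chi\big(M_{X,H}(\vi_n),\nu_M\big),
$$
where $\nu_I,\nu_M$ are the Behrend functions of the two moduli spaces. These functions depend only on the underlying schemes, so the symmetric obstruction theories enter only through the existence of the virtual cycles and the validity of Behrend's formula. The whole content of the corollary is then the comparison of $\nu_M$ with the pullback of $\nu_I$ along the fibration of Theorem~\ref{theorem.1}.

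The key input is the functoriality of the Behrend function under smooth morphisms \cite{Be}: if $p\colon M\to B$ is smooth of relative dimension $d$, then $\nu_M=(-1)^d\,p^*\nu_B$. Theorem~\ref{theorem.1} exhibits
$$
p\colon M_{X,H}(\vi_n)\To B:=I_m(X,\beta)\times\Pic\_0(X)\times\Pic\_0(X)
$$
as a smooth projective bundle with fibre $\PP^{\,\chi(\vi(n))-1}$, so $d=\chi(\vi(n))-1$ and $\nu_M=(-1)^{\chi(\vi(n))-1}p^*\nu_B$. Next I would record that, because $X$ is Calabi--Yau with $H^1(\cO_X)=0$, the identity component of $\Pic(X)$ is trivial, so $\Pic\_0(X)$ is the finite group of torsion line bundles: a disjoint union of $\#H^2(X,\Z)_{\mathrm{tors}}$ reduced points, whose Behrend function is identically $1$. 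Multiplicativity of Behrend functions on products then gives $\nu_B=\nu_I\boxtimes 1\boxtimes 1$, hence
$$
\chi(B,\nu_B)\=\big(\#H^2(X,\Z)_{\mathrm{tors}}\big)^2\cdot\chi\big(I_m(X,\beta),\nu_I\big)\=\big(\#H^2(X,\Z)_{\mathrm{tors}}\big)^2\cdot I_{m,\beta}(X).
$$

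Finally I would use that a weighted Euler characteristic whose weight is pulled back from the base, and so is constant along the fibres of a locally trivial fibration, multiplies by the Euler characteristic of the fibre. Stratifying $B$ by the value of $\nu_B$ and applying this to $p^*\nu_B$ on the $\PP^{\,\chi(\vi(n))-1}$-bundle yields
$$
\chi\big(M,\,p^*\nu_B\big)\=\chi\big(\PP^{\,\chi(\vi(n))-1}\big)\cdot\chi(B,\nu_B)\=\chi(\vi(n))\cdot\chi(B,\nu_B).
$$
Combining the three displays gives
$$
\Omega_{\vi_n}(X)\=(-1)^{\chi(\vi(n))-1}\,\chi(\vi(n))\,\big(\#H^2(X,\Z)_{\mathrm{tors}}\big)^2\cdot I_{m,\beta}(X)\=e_n\cdot I_{m,\beta}(X),
$$
as required. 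The only point that genuinely uses Theorem~\ref{theorem.1} rather than soft properties of Euler characteristics is the \emph{smoothness} of $p$: it is what licenses the clean sign $(-1)^{\chi(\vi(n))-1}$ in the Behrend-function pullback and hence the exact signed fibre contribution $e_n$. Accordingly I expect no real obstacle here — the difficulty was isolated entirely in Theorem~\ref{theorem.1}, and this statement is the formal harvest of its fibration structure.
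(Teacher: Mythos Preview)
Your proposal is correct and is precisely the argument the paper has in mind: the text declares the result an ``immediate corollary'' of writing both invariants as Behrend-weighted Euler characteristics and using the smooth $\PP^{\,\chi(\vi(n))-1}$-bundle of Theorem~\ref{theorem.1} over $I_m(X,\beta)\times\Pic\_0(X)^2$, which contributes the signed Euler characteristic $e_n$ of the fibre. You have simply unpacked the pullback formula $\nu_M=(-1)^d p^*\nu_B$, the triviality of $\Pic\_0(X)^{\mathrm{red}}$ when $H^1(\cO_X)=0$, and the multiplicativity on the $\PP^{\,\chi(\vi(n))-1}$ fibre --- all of which is exactly what the paper leaves implicit.
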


In Section~\ref{modular} we discuss the conjectural modular properties of
the invariants $\Omega_{v_n}$ from two points of view: (i) S-duality from physics and (ii) Noether--Lefschetz theory \cite{MPa}.

\subsection*{Acknowledgements} We thank Arend Bayer, Luis Garcia, Chunyi Li, Jan Manschot, Davesh Maulik, Rahul Pandharipande and an anonymous referee for their generous help with this paper. Our intellectual debt to Yukinobu Toda is described in Section~\ref{related}.

%
%
%

\section{Weak stability conditions}
Let $(X,\mathcal O(1))$ be a smooth polarised complex threefold and $H = c_1(\mathcal O(1))$. Denote the bounded derived category of coherent sheaves on $X$ by $\cD(X)$ and its Grothendieck group by $K(X):=K(\cD(X))$. In this section we review the notion of a weak stability condition on $\cD(X)$. The main references are \cite{BMT,BMS}.

We define the $\mu\_H$-slope of a coherent sheaf $E$ on $X$ to be
$$
\mu\_H(E)\ :=\ \left\{\!\!\begin{array}{cc} \frac{\ch_1(E)\cdot H^2}{\ch_0(E)H^3} & \text{if }\ch_0(E)\ne0, \\
+\infty & \text{if }\ch_0(E)=0. \end{array}\right.
$$
Associated to this slope, every sheaf $E$ has a Harder--Narasimhan filtration. Its graded pieces have slopes whose maximum we denote by $\mu_H^+(E)$ and minimum by $\mu_H^-(E)$.

For any $b \in \mathbb{R}$, let $\cA(b)\subset\cD(X)$ denote the abelian category of complexes
	\begin{equation}\label{Abdef}
	\mathcal{A}(b)\ =\ \left\{E^{-1} \xrightarrow{\,d\,} E^0 \ \colon\ \mu_H^{+}(\ker d) \leq b \,,\  \mu_H^{-}(\cok d) > b \right\}. 
	\end{equation}
Then $\cA(b)$ is the heart of a t-structure on $\cD(X)$ by \cite[Lemma 6.1]{Br}. Let $w\in\R\setminus\{0\}$. On $\cA(b)$ we have the slope function\footnote{This is called $\nu_{b,w}$ in \cite[Equation 7]{BMT}, but we reserve $\nu_{b,w}$ for its rescaling \eqref{scale}.}
\begin{equation*}
N_{b,w}(E)\ :=\ \left\{\!\!\begin{array}{cc} \frac{w\ch_2^{bH}(E)\cdot H - \frac{1}{6}w^3\ch_0(E)H^3}{w^2\ch_1^{bH}(E)\cdot H^2} & \text{if }\ch_1^{bH}(E)\cdot H^2\ne0, \\
+\infty & \text{if }\ch_1^{bH}(E)\cdot H^2=0, \end{array}\right.
\end{equation*}
where $\ch^{bH}(E):=\ch(E)e^{-bH}$. This defines a Harder--Narasimhan filtration on $\cA(b)$ by \cite[Lemma~3.2.4]{BMT}. It will be convenient to replace this by
\begin{equation}\label{scale}
\nu\_{b,w}\ :=\ \sigma N_{b,\sigma}+b, \quad\text{where }\sigma:=\sqrt{6\left(w-\frac{b^2}{2}\right)},
\end{equation}
for $w>\frac{1}{2}b^2$. This is because
\begin{equation}\label{noo}
\nu\_{b,w}(E)\ =\ \left\{\!\!\begin{array}{cc} \frac{\ch_2(E)\cdot H - w\ch_0(E)H^3}{\ch_1^{bH}(E)\cdot H^2}
 & \text{if }\ch_1^{bH}(E)\cdot H^2\ne0, \\
+\infty & \text{if }\ch_1^{bH}(E)\cdot H^2=0 \end{array}\right.
\end{equation}
has a denominator that is linear in $b$ and numerator linear in $w$, so the walls of $\nu_{b,w}$-instability will turn out to be \emph{linear}; see Proposition~\ref{prop. locally finite set of walls}. Note that if $\ch_i(E)\cdot H^{n-i} = 0$ for $i=0,1,2$, the slope $\nu_{b,w}(E)$ is defined by \eqref{noo} to be $+\infty$. Since \eqref{scale} only rescales and adds a constant, it defines the same Harder--Narasimhan filtration as $N_{b,\sigma}$, so it too defines a weak stability condition on $\cA(b)$. 

\begin{Def}
Fix $w>\frac{1}{2}b^2$. We say $E\in\cD(X)$ is $\nu\_{b,w}$-(semi)stable if and only if
\begin{itemize}
\item $E[k]\in\cA(b)$ for some $k\in\Z$, and
\item $\nu\_{b,w}(F)\,(\le)\,\nu\_{b,w}(E[k]/F)$ for all non-trivial subobjects $F\hookrightarrow E[k]$ in $\cA(b)$.
\end{itemize}
Here $(\le)$ denotes $<$ for stability and $\le$ for semistability.
\end{Def}

\begin{Rem}\label{heart}
Given $(b,w) \in \mathbb{R}^2$ with $w> \frac{1}{2}b^2$, the argument in \cite[Proposition 5.3]{Br.stbaility} describes $\cA(b)$. It is generated by the $\nu_{b,w}$-stable two-term complexes $E = \{E^{-1} \to E^0\}$ in $\cD(X)$ satisfying the following conditions on the denominator and numerator of $\nu_{b,w}$ from \eqref{noo}:
\begin{enumerate}
    \item $\ch_1^{bH}(E)\cdot H^2 \geq 0$, and
    \item $\ch_2(E)\cdot H - w\ch_0(E)H^3 \geq 0$ if $\ch_1^{bH}(E)\cdot H^2 = 0$. 
\end{enumerate}
That is, $\cA(b)$ is the extension-closure of the set of these complexes. 
\end{Rem}

We recall the conjectural strong Bogomolov--Gieseker inequality of \cite[Conjecture 1.3.1]{BMT}, rephrased in terms of the rescaling \eqref{scale}.

\begin{Con}  \label{conjecture}
	For $\nu\_{b,w}$-semistable $E\in\cA(b)$ with $\ch_2^{bH}(E)\cdot H =\big(w -\frac{1}{2}b^2\big)\ch_0(E)H^3$, 
\begin{equation}\label{BGineq}
		\ch_3^{bH}(E)\ \leq\ \left(\frac{w}{3} - \frac{b^2}{6}\right) \ch_1^{bH}(E)\cdot H^2.
\end{equation}
	\end{Con}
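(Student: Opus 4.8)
The plan is to follow the standard route for a Bogomolov--Gieseker-type inequality for $\nu_{b,w}$-stable (\emph{tilt-stable}) objects, reducing the cubic statement \eqref{BGineq} to lower-degree data one already controls. First I would record the classical tilt-stability inequality, proved in \cite{BMT}: every $\nu_{b,w}$-semistable $E\in\cA(b)$ satisfies the discriminant bound $\big(\ch_1^{bH}(E).H^2\big)^2 \geq 2\big(\ch_0(E)H^3\big)\big(\ch_2^{bH}(E).H\big)$, which follows from Harder--Narasimhan filtrations and the stability of their pieces. This controls the invariants through $\ch_2$, so the genuinely new content of Conjecture~\ref{conjecture} is the bound on the \emph{cubic} invariant $\ch_3^{bH}$. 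Moreover the hypothesis places $E$ on the distinguished locus $\ch_2^{bH}(E).H = \big(w-\tfrac{b^2}2\big)\ch_0(E)H^3$ --- the analogue of the vanishing locus $\nu_{\alpha,\beta}=0$ in \cite{BMT} --- where the quadratic inequality leaves no slack, so \eqref{BGineq} is precisely the extremal refinement one needs.

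The heart of the matter is to bound $\ch_3^{bH}(E)$, and here I would tilt a second time. Passing to the double-tilted heart obtained from $\cA(b)$ via $\nu_{b,w}$-stability and equipping it with the cubic central charge of \cite{BMT}, the inequality \eqref{BGineq} becomes exactly the positivity needed for this central charge to define a Bridgeland stability condition --- equivalently, a support property. Because the walls of $\nu_{b,w}$-instability are \emph{linear} (Proposition~\ref{prop. locally finite set of walls}), I would then deform $(b,w)$ to propagate the bound from a region where tilt-stability coincides with ordinary slope stability of (twisted) sheaves, where \eqref{BGineq} reduces to a Riemann--Roch estimate combined with the classical inequality, out to the critical locus in the hypothesis.

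For a concrete $X$ the remaining input is geometric, and this is where the known cases diverge: for abelian threefolds one transports the inequality across Fourier--Mukai autoequivalences preserving the class of weak stability conditions; for $\PP^3$ and Fano threefolds of Picard rank one one restricts $E$ to a general surface in $|H|$ and induces on dimension, recovering $\ch_3$ from the second Chern character of the restriction. I expect the main obstacle to be exactly this last step: $\ch_3$ is a cubic invariant that is \emph{not} detected by the tilt-stability data $(\ch_0,\ch_1,\ch_2)$, so no formal manipulation of $\nu_{b,w}$ can produce \eqref{BGineq} on its own. One genuinely needs extra, $X$-dependent geometry --- an effective restriction theorem, a well-behaved autoequivalence, or explicit resolutions --- to pin down the third Chern number, which is why Conjecture~\ref{conjecture} remains open in general and is established only for the special threefolds listed after Theorem~\ref{theorem.1}.
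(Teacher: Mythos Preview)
The statement you are trying to prove is a \emph{conjecture} in the paper, not a theorem: the paper gives no proof of Conjecture~\ref{conjecture} and does not claim one. It is the strong Bogomolov--Gieseker inequality of \cite{BMT}, which the authors explicitly note is \emph{false} for some threefolds \cite{Sc}. The paper's logical role for this statement is purely as a standing hypothesis --- in fact only the weakening in Conjecture~\ref{BG} is assumed --- and the subsequent Theorem~\ref{Li} merely checks that Li's result on the quintic is strong enough to imply Conjecture~\ref{BG} for the specific parameters needed. So there is no ``paper's own proof'' to compare your proposal against.

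Your write-up is an accurate survey of the known strategies (second tilt, deformation along linear walls, Fourier--Mukai transport for abelian threefolds, restriction to surfaces for Fanos), and you correctly diagnose why no formal argument can work: $\ch_3$ is invisible to the tilt slope, so genuine $X$-dependent geometry is required. But precisely because of this, your outline is not and cannot be a proof of the conjecture as stated --- it is a sketch of why the conjecture is hard and a list of the special cases in which it has been established. If the intent was to prove \eqref{BGineq} in general, that is currently impossible (indeed counterexamples exist); if the intent was to prove it for one of the listed threefolds, you would need to commit to that $X$ and carry out the specific geometric input rather than gesture at it.
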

	
Although this conjecture is known \emph{not} to hold for all classes on all threefolds \cite{Sc}, it is possible it always holds for the special classes required to prove Theorem~\ref{theorem.1}. Setting
$$
b_0\ :=\ -\frac{n}{2} - \frac{\beta\cdot H}{nH^3}\,, \quad
w_f\ := \dfrac{n^2}{4} -\frac{\beta\cdot H}{H^3} - \dfrac{3m}{nH^3}  - \left(\frac{\beta\cdot H}{nH^3}\right)^{\!2},
$$
we require the following.

\begin{Conj}\label{BG}{\samepage
Conjecture~\ref{conjecture} holds in case \eqref{BG-1} below, and for \eqref{BG-2} when $\beta\cdot H > 0$.
\begin{enumerate}[label={\rm(\roman*)}, ref={\rm\roman*}]
\item\label{BG-1} $b =b_0$, $w\in(w_f-\epsilon,w_f]$ for some $0< \epsilon \ll 1$, and $\ch(E)=\vi_n$.
\item\label{BG-2} $b = \ch_2(E)\cdot H - \frac{1}{2H^3},\ w = b^2 + \frac{\ch_2(E)\cdot H}{H^3}$, and $E$ a torsion-free sheaf with
$$
\ch_0(E)\,=\,1, \quad \ch_1(E)\cdot H^2\,=\,0, \quad 
-\ch_2(E)\cdot H\,\in\,\left[\beta\cdot H,\,2\beta\cdot H\right].
$$ 
\end{enumerate}}
\end{Conj}

In fact an even weaker version of~\ref{BG} in enough to conclude Theorems~\ref{theorem.1} and~\ref{Theorem 2}, as shown in \cite[Section 3.1]{rk2}.
Conjecture~\ref{conjecture} follows from \cite[Conjecture 4.1]{BMS}, which has now been proved in the following cases: 
\begin{itemize}
\item $X$ is projective space $\mathbb{P}^3$ (see \cite{Ma}), the quadric threefold (see \cite{ScQ}) or, more generally, any Fano threefold of Picard rank 1 (see \cite{Li.Fano}),
\item $X$ is an abelian threefold (see \cite{MP}), a Calabi--Yau threefold of abelian type (see \cite{BMS}), a Kummer threefold (see \cite{BMS}), or a product of an abelian variety and $\PP^n$ (see \cite{KosekiAb}),
\item $X$ has nef tangent bundle (see \cite{Kosekinef}),
\item $X$ is one of the Calabi--Yau threefolds considered in \cite{Ko20}; with some work, one can show the weakening of Conjecture~\ref{conjecture} proved in \cite[Theorem 1.2]{Ko20} is still strong enough to give Theorem~\ref{theorem.1} for $n\gg0$, and
\item $X$ is a quintic threefold (see \cite{Li}), or a (2,4) complete intersection in $\PP^5$ (see \cite{Liu}), and $(b,w)$ are described below.
\end{itemize}

\begin{Thm}[\textit{cf.}~\protect{\cite[Theorem 2.8]{Li}, \cite[Theorem 2.14]{Liu}}] \label{Li}
Conjecture~\ref{conjecture} holds on a smooth quintic threefold or a $(2,4)$ complete intersection in $\PP^5$ when $(b,w)$ satisfy 
\begin{equation}\label{in for b, w}
w\ >\ \frac{1}{2} b^2 + \frac{1}{2}\left(b - \lfloor b \rfloor\right)\left(\lfloor b \rfloor+1 - b \right). 
\end{equation}
In particular, Conjecture~\ref{BG} holds for $n \gg 0$. 
\end{Thm}

\begin{proof}
Using the notation $(\alpha,\beta)$ for our $(w,b)$, \cite[Theorem 2.8]{Li} and \cite[Theorem 2.14]{Liu} prove that \eqref{in for b, w} implies \cite[Conjecture 4.1]{BMS}. This gives Conjecture~\ref{conjecture}, so we only need to check that the parameters in Conjecture~\ref{BG} satisfy \eqref{in for b, w}. 

For the parameters in the first part of Conjecture~\ref{BG}, we have 
$$
    w_f - \frac{b_0^2}{2}\ =\ \frac{n^2}{8} - \frac{3\beta\cdot H}{2H^3} - \frac{3m}{nH^3} - \frac{3}{2} \left(\frac{\beta\cdot H}{nH^3}\right)^{\!2},
$$
which for $n\gg0$ satisfies 
$$
 w_f - \frac{b_0^2}{2}>\ \frac12\ \geq\ \frac{1}{2}\left(b_0 - \lfloor b_0 \rfloor\right)\left(\lfloor b_0 \rfloor +1- b_0 \right).
$$
Thus there exists an $0 < \epsilon \ll 1$ such that $(b_0,w)$ satisfies \eqref{in for b, w} whenever $w\in(w_f -\epsilon,w_f]$.  

For the second part of Conjecture~\ref{BG}, use the obvious inequality
$$
2x\left(x-\frac{1}{H^3}\right)\ >\ \frac{1}{H^3}\left(1-\frac{1}{H^3}\right) \quad\text{for }x\ \ge\ 1.
$$
Rearranging gives
$$
\frac12\left(-x-\frac1{2H^3}\right)^{\!2}-\frac x{H^3}\ >\ \frac1{4H^3}\left(1-\frac1{2H^3}\right).
$$
Substituting in $x=-\ch_2(E)\cdot H \ge \beta\cdot H \ge 1$ and $b = \ch_2(E)\cdot H - \frac{1}{2H^3}$ makes this
$$
\frac{b^2}2+\frac{\ch_2(E)\cdot H}{H^3}\ >\ \frac1{4H^3}\left(1-\frac1{2H^3}\right).
$$
For $w = b^2 + \frac{\ch_2(E)\cdot H}{H^3}$ this is 
$$
w-\frac{b^2}2\ >\ \frac12\left(1-\frac1{2H^3}\right)\frac1{2H^3}\ =\ \frac12\left(b - \lfloor b \rfloor\right)\left(\lfloor b\rfloor-b+1\right)
$$
since $\ch_2(E)\cdot H\in\Z$ for $E$ of rank 1 with $\ch_1(E)=0$. Thus \eqref{in for b, w} holds for this $(b,w)$.
\end{proof}

In Figure~\ref{projetcion} we plot the $(b,w)$-plane simultaneously with the image of the projection map
\begin{eqnarray*}
	\Pi\colon\ K(X) \setminus \left\{E \colon \ch_0(E) = 0\right\}\! &\longrightarrow& \R^2, \\
	E &\longmapsto& \!\!\left(\frac{\ch_1(E)\cdot H^2}{\ch_0(E)H^3}\,,\, \frac{\ch_2(E)\cdot H}{\ch_0(E)H^3}\right).
\end{eqnarray*}
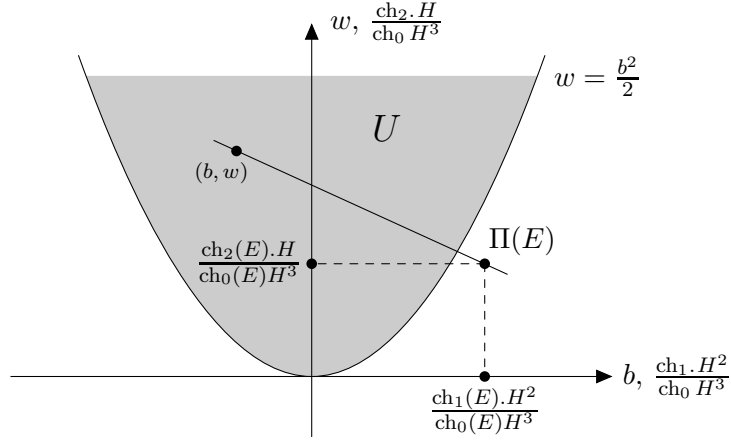
\begin{figure}[h]
	\begin{centering}
		\definecolor{zzttqq}{rgb}{0.27,0.27,0.27}
		\definecolor{qqqqff}{rgb}{0.33,0.33,0.33}
		\definecolor{uququq}{rgb}{0.25,0.25,0.25}
		\definecolor{xdxdff}{rgb}{0.66,0.66,0.66}
		
		\begin{tikzpicture}[line cap=round,line join=round,>=triangle 45,x=1.0cm,y=1.0cm]
		
		\draw[->,color=black] (-4,0) -- (4,0);
		\draw  (4, 0) node [right ] {$b,\,\frac{\ch_1\!.\;H^2}{\ch_0H^3}$};


		\fill [fill=gray!40!white] (0,0) parabola (3,4) parabola [bend at end] (-3,4) parabola [bend at end] (0,0);
		
		\draw  (0,0) parabola (3.1,4.27); 
		\draw  (0,0) parabola (-3.1,4.27); 
		\draw  (3.8 , 3.6) node [above] {$w= \frac{b^2}{2}$};
		
		

		\draw[->,color=black] (0,-.8) -- (0,4.7);
		\draw  (1, 4.3) node [above ] {$w,\,\frac{\ch_2\!.\;H}{\ch_0H^3}$};

		
		\draw [dashed, color=black] (2.3,1.5) -- (2.3,0);
		\draw [dashed, color=black] (2.3, 1.5) -- (0, 1.5);
		\draw [color=black] (2.6, 1.36) -- (-1.3, 3.14);
		
		\draw  (2.8, 1.8) node {$\Pi(E)$};
		\draw  (1, 3) node [above] {\Large{$U$}};
		\draw  (0, 1.5) node [left] {$\frac{\ch_2(E)\cdot H}{\ch_0(E)H^3}$};
		\draw  (2.3 , 0) node [below] {$\frac{\ch_1(E)\cdot H^2}{\ch_0(E)H^3}$};
		\begin{scriptsize}
		\fill (0, 1.5) circle (2pt);
		\fill (2.3,0) circle (2pt);
		\fill (2.3,1.5) circle (2pt);
		\fill (-1,3) circle (2pt);
		\draw  (-1.2, 2.96) node [below] {$(b,w)$};
		
		\end{scriptsize}
		
		\end{tikzpicture}
		
		\caption{$(b,w)$-plane and the projection $\Pi(E)$ when $\ch_0(E)>0$}
		
		\label{projetcion}
		
	\end{centering}
\end{figure}

\noindent Note that for any weak stability condition $\nu_{b,w}$, the pair $(b,w)$ is in the shaded open subset
\begin{equation}\label{Udef}
U \,:= \,\left\{(b,w) \in \mathbb{R}^2 \colon w > \frac{b^2}{2}  \right\}.
\end{equation}
Conversely, the image $\Pi(E)$ of $\nu_{b,w}$-semistable objects $E$ with $\ch_0(E)\ne0$ is \emph{outside} $U$ due to the classical Bogomolov--Gieseker-type inequality
for the $H$-discriminant,
\begin{equation}\label{discr}
	\Delta_H(E)\ =\  \left(\!\ch_1(E)\cdot H^2\right)^2 -2 (\ch_0(E)H^3)(\ch_2(E)\cdot H)\ \ge\ 0,
\end{equation}
proved for $\nu_{b,w}$-semistable objects $E$ in \cite[Theorem 3.5]{BMS}.\footnote{\cite[Theorem 3.5]{BMS} states \eqref{discr} with $\ch$ replaced by $\ch^{bH}$, but the result is still $\Delta_H(E)$. We use the stronger Bogomolov inequality $\ch_1(E)^2\cdot H-2\ch_0(E)(\ch_2(E)\cdot H)\ge0$ for $\mu\_H$-semistable sheaves in \eqref{condition 2}.} By Remark~\ref{heart} they lie to the right of (or on) the vertical line through $(b,w)$ if $\ch_0(E)>0$, to the left if $\ch_0(E)<0$, and at infinity if $\ch_0(E)=0$. The slope $\nu_{b,w}(E)$ of $E$ is the gradient of the line connecting $(b,w)$ to $\Pi(E)$.

Objects of $\cD(X)$ give the space of weak stability conditions a wall and chamber structure, explained in \cite[Proposition 4.1]{FT} for instance.

\begin{Prop}[{Wall and chamber structure}]\label{prop. locally finite set of walls}
	Fix an object $E \in \mathcal{D}(X)$ such that the vector $(\ch_0(E), \ch_1(E)\cdot H^2,$ $\ch_2(E)\cdot H)$ is non-zero. There exists a set of lines $\{\ell_i\}_{i \in I}$ in $\mathbb{R}^2$ such that the segments $\ell_i\cap U$ $($called ``\emph{walls}''\,$)$ are locally finite and satisfy:  
	\begin{enumerate}
	    \item If\, $\ch_0(E)\ne0$, then all lines $\ell_i$ pass through $\Pi(E)$.
	    \item If\, $\ch_0(E)=0$, then all lines $\ell_i$ are parallel of slope $\frac{\ch_2(E)\cdot H}{\ch_1(E)\cdot H^2}$.
	   		\item The $\nu\_{b,w}$-$($semi\,$)$stability of $E$ is unchanged as $(b,w)$ varies within any connected component $($called a ``\emph{chamber}''\,$)$ of $U \setminus \bigcup_{i \in I}\ell_i$.
		\item For any wall $\ell_i\cap U$ there are a $k_i \in \mathbb{Z}$ and a map $f\colon E[k_i] \to F$ in $\cD(X)$ such that
\begin{itemize}
\item for any $(b,w) \in \ell_i \cap U$ the objects $E[k_i],\,F$ lie in the heart $\cA(b)$; 
\item $E[k_i]$ is $\nu\_{b,w}$-semistable with $\nu\_{b,w}(E)=\nu\_{b,w}(F)=\,\slope(\ell_i)$ constant on the wall $\ell_i \cap U$, and
\item $f$ is a surjection $E[k_i] \twoheadrightarrow F$ in $\cA(b)$ which strictly destabilises $E[k_i]$ for $(b,w)$ in one of the two chambers adjacent to the wall $\ell_i$.
\end{itemize} 
	\end{enumerate} 
\end{Prop}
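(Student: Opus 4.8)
The plan is to read the entire wall structure off the explicit slope \eqref{noo}, whose numerator $\ch_2(E).H-w\,\ch_0(E)H^3$ is affine in $w$ and whose denominator $\ch_1^{bH}(E).H^2=\ch_1(E).H^2-b\,\ch_0(E)H^3$ is affine in $b$. The $\nu_{b,w}$-(semi)stability of $E$ can change only across loci where some subobject $F\hookrightarrow E[k]$ in a heart $\cA(b)$ attains equal slope, $\nu_{b,w}(F)=\nu_{b,w}(E)$. Writing $r_\bullet=\ch_0(\bullet)H^3$, $c_\bullet=\ch_1(\bullet).H^2$, $d_\bullet=\ch_2(\bullet).H$, I would clear denominators in this equality and observe that the quadratic $bw$-terms cancel, leaving the affine-linear equation
\[
(d_Fc_E-d_Ec_F)-b\,(d_Fr_E-d_Er_F)-w\,(r_Fc_E-r_Ec_F)\ =\ 0 .
\]
This cuts out a line $\ell_F$, and ranging over the classes of all such subobjects produces the family $\{\ell_i\}_{i\in I}$. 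Substituting $(b,w)=\Pi(E)=(c_E/r_E,\,d_E/r_E)$ makes the left-hand side vanish identically, so when $\ch_0(E)\ne0$ every $\ell_i$ passes through $\Pi(E)$, which is (a); when $\ch_0(E)=0$ the coefficients of $b$ and $w$ become $d_Er_F$ and $-c_Er_F$, giving each $\ell_F$ the $F$-independent slope $d_E/c_E=\ch_2(E).H/\ch_1(E).H^2$, which is (b).

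The crux, and the step I expect to be the main obstacle, is \emph{local finiteness}. Fixing a compact $K\subset U$, I would show that only finitely many classes $\ch(F)$ can give a line $\ell_F$ that meets $K$ and genuinely destabilises. The heart description of Remark \ref{heart} forces $0\le\ch_1^{bH}(F).H^2\le\ch_1^{bH}(E).H^2$ for any subobject $F\hookrightarrow E[k]$ in $\cA(b)$, since the quotient $E[k]/F$ also lies in $\cA(b)$; over the compact $K$ this bounds $\ch_1(F).H^2$. The $\nu_{b,w}$-semistability of $F$ on the wall places $\Pi(F)$ outside $U$ via the discriminant inequality \eqref{discr}, and combined with the slope-matching constraint — collinearity of $(b,w)$, $\Pi(E)$, $\Pi(F)$ when $\ch_0\ne0$, and the parallel condition when $\ch_0(E)=0$ — this confines $\Pi(F)$, hence $(\ch_0(F),\ch_2(F).H)$, to a bounded region. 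As these invariants are integral after pairing with powers of $H$, only finitely many classes survive, so only finitely many $\ell_i$ meet $K$. The delicate points here are that the heart $\cA(b)$ varies with $b$ along a wall, so the constraints must be applied uniformly over $K$, and that one must preclude an accumulation of distinct wall-slopes at the boundary of $K$.

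Properties (c) and (d) then follow formally. Inside a connected component of $U\setminus\bigcup_i\ell_i$ no wall is crossed, so no subobject of $E[k]$ in $\cA(b)$ ever attains equal slope; hence the Harder–Narasimhan type of $E$, and in particular its $\nu_{b,w}$-(semi)stability, is constant on that chamber, giving (c). On a wall $\ell_i\cap U$ the subobject realising it supplies a map $f\colon E[k_i]\to F$ with $E[k_i],F\in\cA(b)$ and $\nu_{b,w}(E)=\nu_{b,w}(F)=\mathrm{slope}(\ell_i)$ along the wall; passing to the relevant Jordan–Hölder factor makes $f$ a surjection $E[k_i]\twoheadrightarrow F$ in $\cA(b)$ whose slope inequality tips strictly on exactly one of the two adjacent chambers, giving (d). The only genuine work, then, lies in the boundedness argument of the second paragraph, for which the Bogomolov–Gieseker inequality \eqref{discr} is essential.
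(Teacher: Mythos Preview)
The paper does not actually prove this proposition: the statement ends with a bare $\square$, and just before stating it the authors refer the reader to \cite[Proposition 4.1]{FT} for the explanation. So there is no in-paper argument to compare against. Your proposal is essentially the standard proof one finds in that reference: you read off the linearity of each numerical wall directly from the affine form of the numerator and denominator of \eqref{noo}, verify (a) and (b) by substitution, and then obtain local finiteness by combining the heart constraint $0\le\ch_1^{bH}(F).H^2\le\ch_1^{bH}(E).H^2$ with the discriminant inequality \eqref{discr} applied to the semistable Jordan--H\"older factors on the wall, over a compact $K\subset U$. Parts (c) and (d) then follow formally, as you say.

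The outline is correct. One small point worth tightening in (b): your slope computation tacitly assumes $r_F\ne0$, since if the subobject $F$ also has $\ch_0(F)=0$ then both slopes $\nu_{b,w}(E),\,\nu_{b,w}(F)$ are constant over $U$ and the equality $\nu_{b,w}(F)=\nu_{b,w}(E)$ either holds identically or never, producing no genuine wall. This is precisely why every actual wall in the rank-zero case comes from an $F$ with $\ch_0(F)\ne0$, which is what makes the slope $d_E/c_E$ independent of $F$.
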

	
	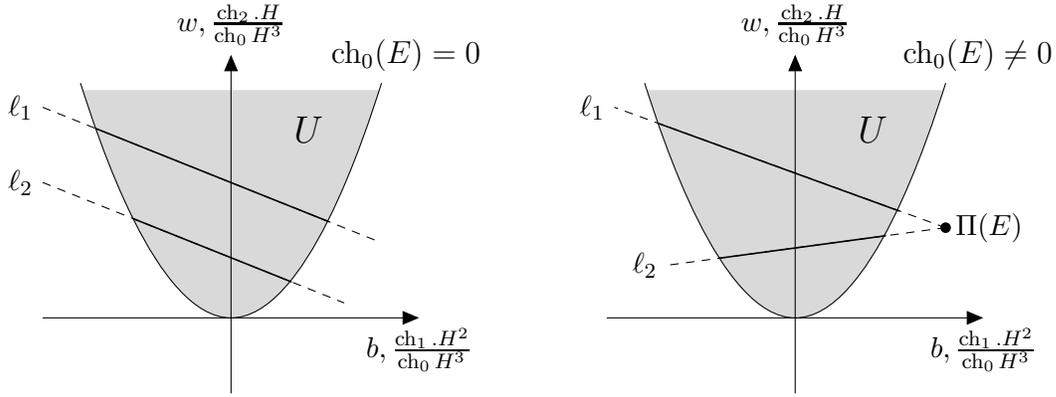
\begin{figure} [h]
	\begin{centering}
		
		\begin{tikzpicture}[line cap=round,line join=round,>=triangle 45,x=1.0cm,y=1.0cm]
	
		\draw[->,color=black] (-10.5,0) -- (-5.5,0);
		\draw[->,color=black] (-3,0) -- (2,0);
		
		\fill [fill=gray!30!white] (-0.5,0) parabola (1.47, 3.03) parabola [bend at end] (-2.47,3.03) parabola [bend at end] (-0.5,0);
		
		\fill [fill=gray!30!white] (-8,0) parabola (-6.03, 3.03) parabola [bend at end] (-9.97,3.03) parabola [bend at end] (-8,0);

		\draw[->,color=black] (-8,-1) -- (-8,3.5);
		\draw[->,color=black] (-0.5,-1) -- (-0.5,3.5);

		\draw [] (-0.5,0) parabola (1.5,3.12); 
		\draw [] (-0.5,0) parabola (-2.5,3.12); 
		\draw [] (-8,0) parabola (-10,3.12); 
		\draw [] (-8,0) parabola (-6,3.12);

		\draw[color=black, dashed] (-10.5,2.8) -- (-6,1);
		\draw[color=black, dashed] (-10.5,1.8) -- (-6.5,0.2);

       \draw[color=black,semithick] (-9.8,2.52) -- (-6.7,1.28);
       \draw[color=black,semithick] (-9.3,1.32) -- (-7.2,.48);
       
		\draw (-10.5,1.8) node [left] {$\ell_2$};
		\draw (-10.5,2.8) node [left] {$\ell_1$};

		\draw (.8,3.5) node [right] {\large{$\ch_0(E) \neq 0$}};
		\draw (-6.8,3.5) node [right] {\large{$\ch_0(E) = 0$}};
		\draw (-5.5,0) node [below] {$b, \frac{\ch_1\cdot H^2}{\ch_0H^3}$};
		\draw (-8,3.5) node [above] {$w, \frac{\ch_2\cdot H}{\ch_0H^3}$};
		
		\draw (2,0) node [below] {$b, \frac{\ch_1\cdot H^2}{\ch_0H^3}$};
		\draw (-0.5,3.5) node [above] {$w, \frac{\ch_2\cdot H}{\ch_0H^3}$};
	
		\draw (-7.3,2.5) node [right] {\Large{$U$}};
		\draw (0.2,2.5) node [right] {\Large{$U$}};

		\draw (1.5, 1.2) node [right] {$\Pi(E)$};
		
		\draw[color=black, dashed] (1.5, 1.2) -- (-2.9,2.8);
		\draw[color=black, dashed] (1.5, 1.2) -- (-2.2, .7);
		
		\draw (-2.9,2.8) node [left] {$\ell_1$};
		\draw (-2.2, .7) node [left] {$\ell_2$};

		\draw[color=black, semithick] (-2.3 ,2.58) -- (.88,1.423);
		\draw[color=black, semithick] (-1.5,.795) -- (0.7,1.092);

		\begin{scriptsize}
	
%
%
%

		\fill [color=black] (1.5,1.2) circle (2pt);
		
%

		
%
%
%
%
%
%
		

		\end{scriptsize}
		
		\end{tikzpicture}
		
		\caption{The line segments $\ell_i \cap U$ are walls for $E$.}
		
		\label{wall.figure}
		
	\end{centering}
	
\end{figure}

\section{From sheaves to Joyce--Song pairs}
Let $(X, \mathcal{O}(1))$ be a smooth polarised complex threefold, and let $H := c_1(\mathcal{O}(1))$. Fix $\beta$ in the image of $H^4(X, \mathbb{Z}) \to H^4(X, \mathbb{Q})$ and $m \in \mathbb{Z}$. In this section we investigate walls of instability for sheaves of Chern character
\begin{equation*}
	\textstyle{\vi_n := \left(0, nH,\,-\beta -\dfrac{1}{2}n^2H^2 ,\, -m + \dfrac{1}{6}n^3H^3 \right)}
\end{equation*}
when $n \gg 0$. For any sheaf $F$ of rank 0, we define its $\nu\_H$-slope as 
\begin{equation}\label{nuslope}
\nu\_H(F)\ :=\ \left\{\!\!\begin{array}{cc} \frac{\ch_2(F)\cdot H}{\ch_1(F)\cdot H^2} & \text{if }\ch_1(F)\cdot H^2\ne0, \\
+\infty & \text{if }\ch_1(F)\cdot H^2=0. \end{array}\right.
\end{equation}
We say that a sheaf $F$ of rank 0 is slope (semi)stable if for all non-trivial quotients $F\to\hspace{-3mm}\to F'$, one has $\nu\_H(F)\,(\leq)\, \nu\_H(F')$.

This section is devoted to proving the following half of Theorem~\ref{theorem.1}.

\begin{Thm}\label{Theorem. part 1}
Fix $\beta\in\im\big(H^4(X,\Z)\to H^4(X,\Q)\big),\ m\in\Z,\ n\gg0$, and suppose Conjecture~\ref{BG} holds on $X$. 
Then any slope semistable sheaf $F$ of Chern character $\vi_n$ is slope stable, and there exist unique $(L,I,s)$ such that
$$
	F \ \cong\ \cok (s)\otimes L,
$$
where $I=I_C\otimes T$ is a torsion-free sheaf of Chern character $\vi = (1, 0, -\beta, -m)$, the line bundles $L,T$ have torsion $c_1$,  and $s\colon \cO_X(-n) \rightarrow I$ is non-zero. 
\end{Thm}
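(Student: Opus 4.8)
The plan is to regard a slope-semistable sheaf $F$ of class $\vi_n$ as a point in the large-volume chamber of the weak stability conditions and then to pin down the single wall on which it breaks up. Since $\ch_0(\vi_n)=0$, slope semistability of $F$ implies $\nu_{b,w}$-semistability for every $(b,w)\in U$ with $w\gg0$, so $F$ lies in $\cA(b_0)$. A direct computation gives $\ch_2(\vi_n).H/\ch_1(\vi_n).H^2=b_0$, whence by Proposition \ref{prop. locally finite set of walls}(b) every wall for $\vi_n$ is a line of slope $b_0$ and the slope $\nu_{b_0,w}(F)=b_0$ is constant along the vertical line $b=b_0$.

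The mechanism that forces $F$ to break up is Conjecture \ref{BG}(i). One checks $\ch_2^{b_0H}(\vi_n).H=0$, so Conjecture \ref{conjecture} applies to any $\nu_{b_0,w}$-semistable object of class $\vi_n$ and reads
$$
\ch_3^{b_0H}(\vi_n)\ \le\ \Big(\tfrac{w}{3}-\tfrac{b_0^2}{6}\Big)\,\ch_1^{b_0H}(\vi_n).H^2 .
$$
The left-hand side is a fixed number and the right-hand side is strictly increasing in $w$, and a short calculation shows the two agree exactly at $w=w_f$. Hence for $w<w_f$ the inequality fails, so no object of class $\vi_n$ is $\nu_{b_0,w}$-semistable there. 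Decreasing $w$ from $+\infty$ along $b=b_0$, the object $F$ is semistable at the top but not below $w_f$, so it is strictly destabilised at a first wall $w_1\ge w_f$.

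The crux is the analysis of the destabilising sequence $0\to A\to F\to B\to0$ in $\cA(b_0)$ on the wall $w_1$, with $A,B$ being $\nu_{b_0,w_1}$-semistable of slope $b_0$. From $\ch_0(F)=0$ we have $\ch_0(A)=-\ch_0(B)$, and equality of slopes forces $\Pi(A)$ and $\Pi(B)$ to lie on a single line $\ell$ of slope $b_0$, while the classical inequality \eqref{discr} confines them to the region on or below the parabola $w=\tfrac12 b^2$. Using integrality of the Chern classes and the $n\gg0$ estimates I would first reduce to ranks $\pm1$, and then show that the factors sit at $\Pi(A)=(0,-\tfrac{\beta.H}{H^3})$ and $\Pi(B)=(-n,\tfrac12n^2)$ on $\ell$, i.e. $A$ is a torsion-free sheaf $I$ of class $\vi=(1,0,-\beta,-m)$ and $B\cong L(-n)[1]$ for a line bundle $L$ with torsion $c_1$. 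Ruling out the intervening configurations is exactly where Conjecture \ref{BG}(ii) enters: after the reduction it concerns rank-$1$ torsion-free sheaves with $\ch_1.H^2=0$ and $-\ch_2.H\in[\beta.H,2\beta.H]$, and its $\ch_3$ bound is incompatible with the fixed value $\ch_3(\vi_n)$ unless $\ch_2(I)=-\beta$. This identification, which must combine both halves of Conjecture \ref{BG}, the inequality \eqref{discr}, and the positivity coming from $n\gg0$, is where essentially all the difficulty lies.

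Finally I would rotate the triangle $I\otimes L\to F\to L(-n)[1]$ to $L(-n)\to I\otimes L\to F$, exhibiting $F$ as $\cok(s)\otimes L$ for a nonzero $s\colon\cO(-n)\to I$ after untwisting by $L$, and write $I=I_C\otimes T$ to obtain the asserted data. Because $A\hookrightarrow F$ has equal slope, $F$ is only strictly semistable on the wall, so to upgrade to slope \emph{stability} I would argue that a sheaf quotient of $F$ of equal $\nu_H$-slope would either produce a second wall or contradict the fact that $\cok(s)$ has rank $1$ on its (possibly reducible or nonreduced) support. Uniqueness of $(L,I,s)$ then follows because $A$ and $B$ are the canonical Harder--Narasimhan factors of $F$ just below $w_1$: they are determined by $F$, the line bundle $L$ is read off from $B\cong L(-n)[1]$, and $I$ together with $s$ from $A=I\otimes L$ and the connecting map.
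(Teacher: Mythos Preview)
Your overall strategy matches the paper's: use Conjecture~\ref{BG}(i) to force a first wall at some $w_0\ge w_f$, reduce the destabilising factors to rank $\pm1$, then use Conjecture~\ref{BG}(ii) to pin down their Chern characters, and finally read off uniqueness from the Harder--Narasimhan filtration just below the wall. The reduction to rank $\pm1$ that you gesture at is carried out in the paper (Proposition~\ref{prop.the first wall}) by exploiting that $F_1,F_2$ remain in $\cA(b)$ all along the segment $\ell\cap U$, which for $n\gg0$ has horizontal width exceeding $n-\tfrac1{H^3}$; this is the ``integrality and $n\gg0$'' step you allude to. However, three of your subsequent steps contain genuine gaps.

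First, a single application of Conjecture~\ref{BG}(ii) to $A$ cannot force $\ch_2(A).H=-\beta.H$. The relation $\ch_3(\vi_n)=\ch_3(A)+\ch_3(B)$ means a bound on $\ch_3(A)$ alone is useless until $\ch_3(B)$ is also controlled. The paper applies the $\ch_3$ inequality \emph{twice}, once to $F_1$ and once to $F_2(n)$ (Propositions~\ref{prop. upper bound for ch_3(F_1)} and~\ref{prop. upper bound for ch_3(F_2)}), and it is only the \emph{sum} of the two bounds that, for $n\gg0$, forces $\ch_2(F_1).H=-\beta.H$ (Proposition~\ref{prop. exact value of ch2}).

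Second, knowing $\Pi(B)=(-n,\tfrac12n^2)$ is far from knowing $B\cong L(-n)[1]$. The projection $\Pi$ sees only $\ch_1.H^2/\ch_0$ and $\ch_2.H/\ch_0$; it does not determine $\ch_1(B)\in H^2(X,\Q)$, nor $\ch_3(B)$, nor does it rule out a nontrivial $\cH^0(B)$. The paper needs the Hodge index theorem and the classical Bogomolov inequality (Lemma~\ref{lem. exact value of ch1}) to get $\ch_1(\cH^{-1}(F_2))=-nH$ and $\dim\cH^0(F_2)=0$, then a separate duality argument (Lemma~\ref{lem. ch3}) to bound $\ch_3(F_2)$, and finally the $\nu_{b_0,w_0}$-semistability of $F_2$ to kill a possible $I_Z$ twist in $\cH^{-1}(F_2)$.

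Third, your slope-stability argument is circular: that $F$ has rank $1$ on its support is a \emph{consequence} of this theorem (via~\cite{GST}), not an input to its proof, and a same-slope torsion quotient does not produce a new wall since $\nu_{b,w}(F')=\nu_H(F')$ is constant in $(b,w)$. The paper's argument is instead that $I$ and $L(-n)[1]$ are each $\nu_{b_0,w_0}$-\emph{stable} of the common phase; then for any proper torsion quotient $F\twoheadrightarrow F'$ of the same $\nu_H$-slope, the composite $I\hookrightarrow F\twoheadrightarrow F'$ is either zero (forcing a surjection $L(-n)[1]\twoheadrightarrow F'$, impossible by stability) or injective (forcing a surjection $L(-n)[1]\twoheadrightarrow\cok$, again impossible).
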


We call $(I,s)$ a \emph{Joyce--Song pair} when $0\ne s\in H^0(I(n))$. Joyce and Song \cite[Section 5.4]{JS} studied pairs consisting of a sheaf and a section (of a very positive twist of the sheaf) satisfying a version of Gieseker stability for pairs. We use weak stability conditions instead, but ultimately we prove that in this simplified rank 1 situation, both conditions amount to the same: that $I$ is torsion-free and $s$ is non-zero.

To prove Theorem~\ref{Theorem. part 1}, we start in the large volume limit, where a very similar argument to \cite[Proposition 14.2]{Br} implies that a rank 0 sheaf is slope (semi)stable if and only if it is $\nu_{b,w}$-(semi)stable for any $b \in \mathbb{R}$ and $w \gg 0$. 

So now take a slope semistable sheaf $F$ of Chern character $\vi_n$. It is in the heart $\mathcal{A}(b)$ for any $b \in \mathbb{R}$ and $\nu_{b,w}$-semistable for $w \gg 0$. By Proposition~\ref{prop. locally finite set of walls} the walls of instability for $F$ are all line segments of slope $b_0 := - \frac{n}{2}- \frac{\beta\cdot H}{nH^3}$; see Figure~\ref{figure.walls for class v}. The lowest such wall is tangent to $\partial U$ at $\left(b_0, \frac12b_0^2\right)$. So it makes sense to move down the vertical line $b=b_0$ which intersects all the walls of instability for $F$.
Since
\begin{equation*}
\ch_0(F)\ =\ 0\quad\text{and}\quad \ch_2^{b_0H}(F)\cdot H\ =\ -\beta\cdot H -\frac{n^2H^3}{2} -b_0nH^3\ =\ 0,
\end{equation*}
Conjecture~\ref{BG} gives the Bogomolov--Gieseker inequality \eqref{BGineq} for the stability parameters $(b_0,w)$ where $w\in(w_f -\epsilon,w_f]$. This says that while $F$ is $\nu_{b_0, w}$-semistable,
\begin{equation*}
\ch_3^{b_0H}(F)\ =\ -m + \dfrac{n^3H^3}{6} +b_0H\cdot\left(\beta +\dfrac{n^2H^2}{2}\right) + nH\cdot\frac12b_0^2H^2\ \leq\ \left(\dfrac{w}{3} - \dfrac{b_0^2}{6}\right) nH^3.
\end{equation*}
Rearranging gives
\begin{equation}\label{final}
w\ \geq\ w_f\ :=\ \dfrac{n^2}{4} -\frac{\beta\cdot H}{H^3} - \dfrac{3m}{nH^3}  - \left(\frac{\beta\cdot H}{nH^3}\right)^{\!2}.
\end{equation}
We may assume we chose $n\gg0$ sufficiently large that
\begin{equation*}
w_f\ >\ \frac{b_0^2}{2}\ =\ \frac{n^2}{8} + \frac{\beta\cdot H}{2H^3} + \frac{1}{2}\left(\frac{\beta\cdot H}{nH^3}\right)^{\!2}, 
\end{equation*}
so the point $(b_0, w_f)$ lies inside $U$. Therefore, moving down the line $b= b_0$, there is a point $w_0 \geq w_f$ where $F$ is first destabilised. Our ultimate aim (achieved in Proposition~\ref{prop. exact value of ch2}) will be to show that this point is $w_0=\frac{n^2}{4} + \frac{(\beta\cdot H)^2}{(nH^3)^2}$ where $\{b=b_0\}$ intersects the upper (red) line in Figure~\ref{figure.walls for class v}. This is where $F$ can be destabilised by $\cO(-n)[1]$ in (a rotation of\,) a triangle $\cO(-n)\to I\to F$ made by a Joyce--Song pair for some sheaf $I$ of Chern character $v$.
The next proposition gets us part of the way to this goal.

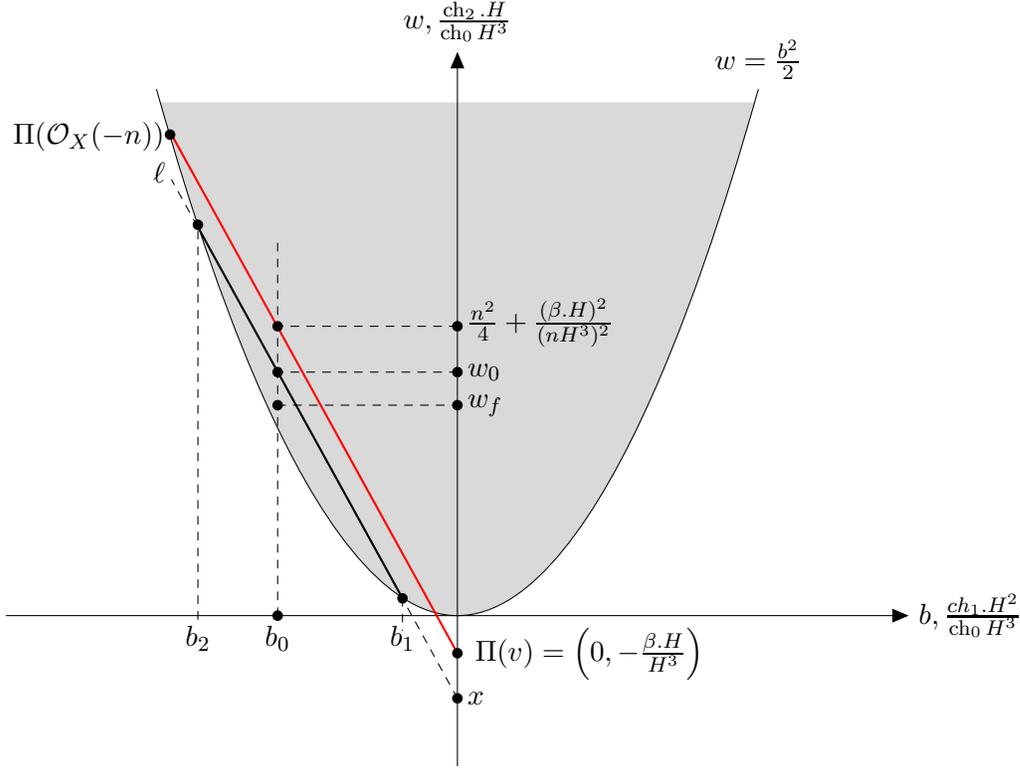
\begin{figure}[h]
	\begin{centering}
		\definecolor{zzttqq}{rgb}{0.27,0.27,0.27}
		\definecolor{qqqqff}{rgb}{0.33,0.33,0.33}
		\definecolor{uququq}{rgb}{0.25,0.25,0.25}
		\definecolor{xdxdff}{rgb}{0.66,0.66,0.66}
		
		\begin{tikzpicture}[line cap=round,line join=round,>=triangle 45,x=1.0cm,y=1.0cm]
		
		\draw[->,color=black] (-6,0) -- (6,0);
		\draw  (6, 0) node [right ] {$b, \frac{ch_1\cdot H^2}{\ch_0H^3}$};
		\fill [fill=gray!30!white] (0,0) parabola (3.95, 6.83) parabola [bend at end] (-3.95, 6.83) parabola [bend at end] (0,0);

		\draw  (0,0) parabola (4,7); 
		\draw  (0,0) parabola (-4,7); 
		\draw  (4 , 7) node [above] {$w=\frac{b^2}{2}$};

		\draw[->,color=black] (0,-2) -- (0,7.5);
		\draw  (0, 7.5) node [above ] {$w, \frac{\ch_2\cdot H}{\ch_0H^3}$};
	
		\draw[dashed,color=black] (-2.39,0) -- (-2.39,5);
		
		\draw[dashed,color=black] (0,3.85) -- (-2.39,3.85);
		
		\draw[dashed,color=black] (0,3.24) -- (-2.39,3.24);
		
		\draw [color=red, thick] (0,-.5) -- (-3.8,6.4);
		\draw [color=black, dashed] (0,-1.1) -- (-3.8,5.8);
		
		\draw [color=black,thick] (-.73, .23) -- (-3.45,  5.17);
		
		\draw [color=black, dashed] (-3.45, -.05) -- (-3.45,  5.15);
		
		\draw[dashed, color=black] (-2.39,2.8) -- (0,2.8);
		\draw [dashed, color=black] (-.73, .23) -- (-.73, -.2);
		\draw (-3.45, 0) node [below]{$b_2$};
		\draw (-.73, 0) node [below]{$b_1$};
		
		\draw  (-4.9, 6.7) node [below ] {$\Pi(\mathcal{O}_X(-n))$};
		\draw  (0.1, -.5) node [right ] {$\Pi(\vi) = \left(0,-\frac{\beta\cdot H}{H^3}\right)$};
		\draw  (0, 3.9) node [right ] {$\frac{n^2}{4} + \frac{(\beta\cdot H)^2}{(nH^3)^2}$ };
		\draw  (0, 3.24) node [right ] {$w_0$};
		\draw  (0, 2.8) node [right ] {$w_f$};
		\draw  (-2.39, 0) node [below ] {$b_0$};
		\draw  (-3.75,5.9) node [left ] {$\ell$};
		\draw  (0, -1.1) node [right] {$x$};
		
		\begin{scriptsize}
		\fill (-2.39, 2.8) circle (2pt);
		\fill (0, 2.8) circle (2pt);
				
		
		\fill (0, -.5) circle (2pt);
		\fill (0, 3.85) circle (2pt);
		
		\fill (-3.82,6.4) circle (2pt);
		\fill (-2.39,0) circle (2pt);
		\fill (-2.39,3.85) circle (2pt);

		\fill (-2.39,3.24) circle (2pt);
		\fill (0,3.24) circle (2pt);
		
		\fill (0,-1.1) circle (2pt);
    	\fill (-.73, .23) circle (2pt);
		\fill (-3.45,  5.2) circle (2pt);

		\end{scriptsize}
		
		\end{tikzpicture}
		
		\caption{Walls for objects of class $\vi_n$}
		
		\label{figure.walls for class v}
		
	\end{centering}
\end{figure}

\begin{Prop}\label{prop.the first wall}
    The wall that bounds the large volume limit chamber $w \gg 0$ for $F$ has slope $b_0=-\frac n2-\frac{\beta\cdot H}{nH^3}$ and passes through the point $(b_0, w_0)$, where 
    $$
    w_f\ \le\ w_0\ \le\ \frac{n^2}{4} + \left(\frac{\beta\cdot H}{nH^3}\right)^2. 
    $$
     On this wall there is a destabilising sequence $F_1 \hookrightarrow F \twoheadrightarrow F_2$ in $\mathcal{A}(b_0)$, where $F_2$ is a 2-term complex with $\dim\supp\cH^0(F_2)\le1$ and $F_1$ is a rank $1$ torsion-free sheaf with 
\begin{equation}\label{dagger}
    \ch_1(F_1)\cdot H^2\ =\ 0 \quad\text{and}\quad -2 \beta\cdot H\ \leq\ \ch_2(F_1)\cdot H\ \leq\ -\beta\cdot H.
\end{equation}
\end{Prop}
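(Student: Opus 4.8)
The plan is to run the tilt-stability wall analysis for the class $\vi_n$ and push all the content into two Bogomolov inequalities evaluated on the first wall. Since $\ch_0(F)=0$, Proposition~\ref{prop. locally finite set of walls}(b) forces every wall for $F$ to be a line of slope $\frac{\ch_2(F).H}{\ch_1(F).H^2}=b_0$, so the ray $\{b=b_0,\ w\downarrow\}$ meets all of them; the paragraph before the statement already locates the first wall at some $(b_0,w_0)$ with $w_0\ge w_f$ by Conjecture~\ref{BG}(i), which is the lower bound asserted. On this wall $\nu_{b_0,w_0}$-semistability of $F$ degenerates, producing a short exact sequence $F_1\hookrightarrow F\twoheadrightarrow F_2$ in $\cA(b_0)$ with $\nu_{b_0,w_0}(F_1)=\nu_{b_0,w_0}(F)=\nu_{b_0,w_0}(F_2)=b_0$. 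Taking the cohomology long exact sequence and using $\cH^{-1}(F)=0$ shows $\cH^{-1}(F_1)=0$, so $F_1$ is a genuine sheaf (hence $\ch_0(F_1)\ge0$), while $F_2=[\cH^{-1}(F_2)\to\cH^0(F_2)]$ is a two-term complex whose $\cH^0$ is a quotient sheaf of $F$. A genuine wall requires $\ch_0(F_1)\neq0$, so $r:=\ch_0(F_1)\ge1$.

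Next I would pass to the twisted characters $\ch^{b_0H}$. A direct computation gives $\ch_2^{b_0H}(F).H=0$, and equality of slopes on the wall then yields $\ch_2^{b_0H}(F_1).H=\big(w_0-\tfrac{b_0^2}2\big)rH^3=-\ch_2^{b_0H}(F_2).H$. Writing $D_i:=\ch_1^{b_0H}(F_i).H^2$, Remark~\ref{heart} gives $D_1,D_2\ge0$ with $D_1+D_2=\ch_1^{b_0H}(F).H^2=nH^3$, and the (twist-invariant) inequality~\eqref{discr}, valid since $F_1,F_2$ are $\nu_{b_0,w_0}$-semistable, reads $D_i^2\ge 2\big(w_0-\tfrac{b_0^2}2\big)(rH^3)^2$ for $i=1,2$. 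Taking square roots and adding, $D_1+D_2=nH^3$ forces $w_0-\tfrac{b_0^2}2\le\tfrac{n^2}{8r^2}$; combined with the a priori bound $w_0\ge w_f$ from~\eqref{final} this becomes $r\le \tfrac n2\big(2w_f-b_0^2\big)^{-1/2}$, whose right-hand side tends to $1^+$ as $n\to\infty$ because $2w_f-b_0^2=\tfrac{n^2}{4}-O(\beta.H)$. Hence for $n\gg0$ we get $r=1$. \emph{This balancing of the two Bogomolov inequalities against the one-sided bound $w_0\ge w_f$ to eliminate higher-rank destabilizers is the crux of the proof and the step I expect to be the main obstacle.}

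With $r=1$ the remainder is bookkeeping. The same two inequalities now confine $D_1$ to an $O(\beta.H/n)$-neighbourhood of $-b_0H^3$, so $\ch_1(F_1).H^2=D_1+b_0H^3$ lies in an interval of length $O(\beta.H/n)$ about $0$; since $\ch_1(F_1).H^2=c_1(F_1).H^2\in\Z$, this forces $\ch_1(F_1).H^2=0$ once $n\gg0$. Given $r=1$ and $\ch_1(F_1).H^2=0$, the inequality~\eqref{discr} for $F_2$ collapses to $\ch_2(F_1).H\le-\beta.H$, which is both the upper bound $w_0\le\tfrac{n^2}4+\big(\tfrac{\beta.H}{nH^3}\big)^2$ and the right half of~\eqref{dagger}; dually $w_0\ge w_f$ reads $\ch_2(F_1).H\ge-2\beta.H-o(1)$, and since these intersection numbers lie in a fixed lattice $\tfrac1N\Z$ we conclude $\ch_2(F_1).H\ge-2\beta.H$ for $n\gg0$. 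This establishes~\eqref{dagger} together with both bounds on $w_0$.

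Finally I would settle the structure of $F_2$ and the purity of $F_1$. The slope-semistable rank-zero sheaf $F$ is pure of dimension $2$, since any $0$- or $1$-dimensional subsheaf has $\nu_H$-slope~\eqref{nuslope} equal to $+\infty$ and would destabilise it; tracing the kernel $\cH^{-1}(F_2)\hookrightarrow F_1$ and the image $\mathrm{im}(F_1\to F)\subseteq F$ then rules out lower-dimensional torsion in $F_1$, giving $F_1$ torsion-free of rank one. For the two-term complex $F_2$, I would feed the computed characters $\big(r=1,\ \ch_1(F_1).H^2=0\big)$ into the four-term sheaf sequence $0\to\cH^{-1}(F_2)\to F_1\to F\to\cH^0(F_2)\to0$, and combine the heart constraint $\mu_H\big(\cH^{-1}(F_2)\big)\le b_0$ with $\nu_{b_0,w_0}$-semistability of $F_2$ (equivalently, the inequality~\eqref{discr} for the subobject $\cH^{-1}(F_2)[1]\hookrightarrow F_2$) to pin $\ch_1\big(\cH^0(F_2)\big).H^2=0$; by effectivity of the rank-zero quotient $\cH^0(F_2)$ this is exactly $\dim\mathrm{supp}\,\cH^0(F_2)\le1$, completing the statement.
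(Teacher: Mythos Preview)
Your core argument---balancing the Bogomolov inequalities \eqref{discr} for $F_1$ and $F_2$ at the single point $(b_0,w_0)$ to force $r=1$, then $\ch_1(F_1).H^2=0$, and then the bounds \eqref{dagger} and $w_0\le\tfrac{n^2}4+\big(\tfrac{\beta.H}{nH^3}\big)^2$---is correct and is a genuinely different packaging from the paper's. The paper instead computes the endpoints $b_1,b_2$ of $\ell\cap\partial U$ explicitly, then uses that $F_1,F_2$ remain in $\cA(b)$ for \emph{all} $b\in(b_2,b_1)$ to deduce $\mu_H^-(F_1)\ge b_1>-\tfrac1{2H^3}$ and $\mu_H^+(\cH^{-1}(F_2))\le b_2<-n+\tfrac1{2H^3}$; a single arithmetic inequality then forces $\ch_0(F_1)=1$ \emph{and} $\ch_1(\cH^0(F_2)).H^2=0$ simultaneously. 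Your route is slicker for the numerics, but the paper's formulation buys something yours does not: control over the \emph{extremal} Harder--Narasimhan slopes $\mu_H^{\pm}$ of $F_1$ and $\cH^{-1}(F_2)$, rather than merely their averages $\mu_H$.

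This is exactly where your final paragraph breaks down. For torsion-freeness of $F_1$, purity of $F$ is not enough: a $2$-dimensional torsion subsheaf $T\subset F_1$ can perfectly well have $2$-dimensional image in $F$ without contradicting purity. The paper's argument is that such a $T$ would force $\mu_H(F_1/T)\le -\tfrac1{H^3}$, contradicting $\mu_H^-(F_1)\ge b_1>-\tfrac1{2H^3}$; but that last inequality comes from the heart constraint along the whole wall, which your Bogomolov-at-a-point approach does not supply. Similarly, for $\dim\mathrm{supp}\,\cH^0(F_2)\le1$ you invoke ``$\mu_H(\cH^{-1}(F_2))\le b_0$'', which only yields $\ch_1(\cH^0(F_2)).H^2\le\tfrac{nH^3}2-\tfrac{\beta.H}n$, far from zero. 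One needs the sharper bound $\mu_H^+(\cH^{-1}(F_2))\le b_2<-n+\tfrac1{2H^3}$, again from the full wall extent, to conclude $\ch_1(\cH^0(F_2)).H^2<\tfrac12$ and hence $=0$. Your reference to ``\eqref{discr} for the subobject $\cH^{-1}(F_2)[1]$'' does not help, since that subobject need not be $\nu_{b_0,w_0}$-semistable. Both gaps are easily closed by importing Proposition~\ref{prop. locally finite set of walls}(d) (the objects lie in $\cA(b)$ for every $b$ on the wall) together with an estimate $b_1>-\tfrac1{2H^3}$, $b_2<-n+\tfrac1{2H^3}$---which in fact follows from your own bound $w_0\ge w_f$---but as written your sketch of these two steps does not go through.
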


\begin{proof}
    By Conjecture~\ref{BG} and \eqref{final}, $F$ is $\nu_{b_0,w_0}$-destabilised by a sequence $F_1 \hookrightarrow F \twoheadrightarrow F_2$ in $\mathcal{A}(b_0)$ for some $w_0 \geq w_f$. By Proposition~\ref{prop. locally finite set of walls} the corresponding wall is $\ell\cap U$ for $\ell$ the line pictured in Figure~\ref{figure.walls for class v} with equation 
    $w = b_0 b + x$, where
    \begin{equation}\label{in. for x}
    x\ =\ -b_0^2 +w_0\ \geq\ -b_0^2 + w_f\ =\ -\frac{2\beta\cdot H}{H^3} - \frac{3m}{nH^3} -2\left(\frac{\beta\cdot H}{nH^3}\right)^{\!2}.
    \end{equation}
    Let $b_2 < b_1 $ be the values of $b$ at the intersection points of $\ell$ and the boundary $\partial U=\big\{w = \frac{1}{2}b^2\big\}$ of the space of weak stability conditions $U$, 
    \begin{equation*}
      b_1\ =\ b_0 + \sqrt{b_0^2+2x}\,, \quad b_2\ =\ b_0 - \sqrt{b_0^2+2x}\,. 
    \end{equation*}
    We claim that 
    \begin{equation}\label{in. lower bound for b1}
    b_1\ >\ -\frac{1}{2H^3} \quad\text{and}\quad b_2\ <\ -n + \frac{1}{2H^3}\,.
    \end{equation}
The first is equivalent to $b_0^2 +2 x > \left(-b_0 - \frac{1}{2H^3} \right)^2$, and thus to  
    \begin{equation*}
	  x\ >\ \frac{b_0}{2H^3} + \frac{1}{8(H^3)^2}\ =\ -\frac{n}{4H^3} - \frac{\beta\cdot H}{2n(H^3)^2} + \frac{1}{8(H^3)^2}\,.
	\end{equation*}
The second  is equivalent to $\left(b_0 +n -\frac{1}{2H^3} \right)^2 < b_0^2 + 2x$, and therefore to 
    \begin{equation*}
    x\ >\ \left(n- \frac{1}{2H^3} \right)\left(b_0 + \frac{1}{2} \left(n- \frac{1}{2H^3} \right) \right)\ =\ -\left(n- \frac{1}{2H^3} \right) \left( \frac{\beta\cdot H}{nH^3} + \frac{1}{4H^3}\right). 
    \end{equation*}
Both of these follow from \eqref{in. for x} for $n\gg0$.

By Proposition~\ref{prop. locally finite set of walls} 
there is a short exact sequence $F_1 \hookrightarrow F \twoheadrightarrow F_2$ in $\cA(b_0)$ which strictly destabilises $F$ below the wall. Taking cohomology gives the long exact sequence of coherent sheaves
   \begin{equation}\label{long exact}
   0 \To \cH^{-1}(F_2) \To \cH^0(F_1) \To F \rightarrow \cH^0(F_2) \To 0.
   \end{equation}
   	In particular, the destabilising subobject $F_1$ is a coherent sheaf. If it had rank 0, then its slope $\nu_{b,w}(F_1)$, see \eqref{noo}, would be constant throughout $U$, like that of $F$, so we would not have a wall. Thus $\ch_0(F_1)>0$, so \eqref{long exact} gives
$$
   \ch_0(\cH^{-1}(F_2))\ =\ \ch_0(F_1)\ >\ 0.
$$
Since $\rk\,F_2=-\rk\,F_1\ne0$, Proposition~\ref{prop. locally finite set of walls} shows that $\Pi(F_1)$ and $\Pi(F_2	)$ lie on the line $\ell$. All along $\ell\cap U$ --- \textit{i.e.}\ for $b \in (b_2, b_1)$ --- the objects $F_1$ and $F_2$ lie in the heart $\mathcal{A}(b)$ and (semi)destabilise $F$. Therefore, by the definition \eqref{Abdef} of $\cA(b)$,
   \begin{equation}\label{two conditions}
   \mu_H^{+}(\cH^{-1}(F_2))\ \leq\ b_2\ <\ -n + \frac{1}{2H^3} 
   \quad\text{and}\quad \mu_H^{-}(F_1)\ \geq\ b_1\ >\ -\frac1{2H^3}\,.
   \end{equation}  
   Thus intersecting the identity $\ch_1(F)-\ch_1(\cH^0(F_2))=\ch_1(F_1)-\ch_1(\cH^{-1}(F_2))$ with $H^2$ and dividing by $\ch_0(F_1)H^3$ gives
   \begin{align}\label{in.1}
   \frac n{\ch_0(F_1)}-\dfrac{\ch_1(\cH^0(F_2))\cdot H^2}{\ch_0(F_1)H^3}\ &=\ \mu\_H(F_1) -\mu\_H(\cH^{-1}(F_2)) \\ \nonumber
   &\ge\ \mu_H^{-}(F_1) - \mu_H^{+}(\cH^{-1}(F_2))\ \geq\ b_1 - b_2\ >\ n- \frac{1}{H^3}\,,
   \end{align}
  with the last inequality following from \eqref{in. lower bound for b1}.
    Now $\ch_1(\cH^0(F_2))\cdot H^2 \geq 0$ since $\cH^0(F_2)$ has rank 0. So \eqref{in.1} can only hold for $n>0$ if $\ch_0(F_1) =1$ and $\ch_1(\cH^{0}(F_2))\cdot H^2 =0$. In particular, $\cH^{0}(F_2)$ is supported in dimension at most 1. Plugging $\rk\,F_1=1$ back into \eqref{two conditions} gives 
	\begin{equation}\label{in. lower bound}
\frac{\ch_1(F_1)\cdot H^2}{H^3}\ =\ \mu\_H(F_1)\ >\ -\frac{1}{2H^3}, \quad\text{so}\quad \mu\_H(F_1)\ \geq\ 0.
	\end{equation}
	Similarly, plugging $\rk\,\cH^{-1}(F_2))=1$ into \eqref{two conditions} gives
     $$
\frac{\ch_1(\cH^{-1}(F_2))\cdot H^2}{H^3}\ =\	\mu\_H(\cH^{-1}(F_2))\ <\ -n + \frac{1}{2H^3}, \quad\text{so}\quad \mu\_H(\cH^{-1}(F_2))\ \leq\ -n.
     $$
    On the other hand, \eqref{in.1} gives $n = \mu\_H(F_1) - \mu\_H(\cH^{-1}(F_2))$, so  in fact
    $$
    \mu\_H(F_1)\ =\ 0 \quad\text{and}\quad \mu\_H(\cH^{-1}(F_2))\ =\ -n\ =\ \mu\_H(F_2).
    $$
We use this to show that $F_1$ is torsion-free. Suppose for a contradiction that its torsion subsheaf $T$ is non-zero. If $\ch_1(T)\cdot H^2>0$, then since $\mu\_H(F_1)=0$, we find $\mu\_H(F_1/T)\le-\frac{1}{H^3}$. But $\mu_H^-(F_1)\le\mu\_H(F_1/T)$, so this contradicts \eqref{two conditions}. Therefore, $\ch_1(T)\cdot H^2=0$, and $T$ is supported in codimension at least 2. But then $\nu_{b_0,w_0}(T)=+\infty$, so $T$ strictly destabilises $F_1$,  contradicting its $\nu_{b_0,w_0}$-semistability.\medskip
    
    To finish the proof, we consider the projected point $\Pi(F_2) = \left( -n \,,\, -\frac{\ch_2(F_2)\cdot H}{H^3}\right)$. Since it lies on the line $\ell=\{w = b_0b+x\}$,
    \begin{equation*}
    -\frac{\ch_2(F_2)\cdot H}{H^3}\ =\ n \left(\frac{n}{2} + \frac{\beta\cdot H}{nH^3} \right) +x.   
    \end{equation*}
    Since $F_2$ is $\nu_{b_0,w_0}$-semistable, it satisfies the classical Bogomolov--Gieseker inequality \eqref{discr},
        \begin{equation}\label{in. upper bound for x}
      -\frac{\ch_2(F_2)\cdot H}{H^3}\ \leq\ \frac{n^2}{2}, \quad\text{so that}\quad x\ \leq\ -\frac{\beta\cdot H}{H^3}\,. 
    \end{equation}
  This proves that the line $\ell$ cannot be above the red uppermost line in Figure~\ref{figure.walls for class v}; \textit{i.e.}\  
  \begin{equation*}
     w_0\ =\ b_0^2 +x\ \leq\ \left(-\frac{n}{2} - \frac{\beta\cdot H}{nH^3} \right)^2- \frac{\beta\cdot H}{H^3}\ =\ \frac{n^2}4+\left(\frac{\beta\cdot H}{nH^3}\right)^{\!2}, 
  \end{equation*}
    as claimed in the proposition. Moreover, $\Pi(F_1) = \left( 0, \frac{\ch_2(F_1)\cdot H}{H^3}\right)\in\ell$ gives
\begin{equation}\label{x}
x\ =\ \frac{\ch_2(F_1)\cdot H}{H^3}\,,
\end{equation}
so the inequalities \eqref{in. upper bound for x} and \eqref{in. for x} imply 
$$
    -\frac{\beta\cdot H}{H^3}\ \geq\ \frac{\ch_2(F_1)\cdot H}{H^3}\ \geq\ -\frac{2\beta\cdot H}{H^3} - \frac{3m}{nH^3} -2\left(\frac{\beta\cdot H}{nH^3}\right)^2.
$$
For $n\gg0$ the second inequality becomes $\frac{\ch_2(F_1)\cdot H}{H^3}\ge-\frac{2\beta\cdot H}{H^3}$.
\end{proof}

\begin{Rem}
If $\beta\cdot H<0$, then \eqref{dagger} gives a contradiction, showing that no such $F$ exists and $M_{X,H}(\vi_n)$ is empty. But then $I_m(X,\beta)$ is also empty since there are no sheaves $I_C\otimes T$ with $[C]=\beta$ when $\beta\cdot H<0$. Thus Theorem~\ref{theorem.1} holds in this case, so from now on we will assume $\beta\cdot H\ge0$.
\end{Rem}

We now borrow some results from \cite[Section 8]{FT}. There we explain why it is profitable to work on the vertical line $\{b = b'\}\cap U$, where $b':=-\frac{1}{H^3}$; see Figure~\ref{figure. the object F1}. In particular, a simple numerical argument (precisely analogous to the argument that rank 1 sheaves can only be slope destabilised by their rank 0 torsion subsheaves) shows that rank 1 objects with $\ch_1\!\cdot H^2=0$ are $\nu_{b',w}$-semistable at a point of this vertical line if and only if they are \emph{$\nu_{b',w}$-stable everywhere on the line}.

\begin{Lem}[\textit{cf.}~\protect{\cite[Section 8]{FT}}]\label{lem. no wall for F1} 
  Take an object $E \in \mathcal{A}(b')$ of rank $1$ with $\ch_1(E)\cdot H^2 = 0$. If\, $E$ is $\nu_{b',w}$-semistable for some $w > \frac12(b')^2$, then it is $\nu_{b',w}$-stable for all $w > \frac12(b')^2$.
\end{Lem}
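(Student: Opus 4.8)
The plan is to push everything through the \emph{denominator} of the slope $\nu_{b',w}$. Write $\delta(G):=\ch_1^{b'H}(G).H^2$ for $G\in\cD(X)$. The point of the special value $b'=-\tfrac1{H^3}$ is that $\delta(G)=\ch_1(G).H^2+\ch_0(G)$, so $\delta$ has three features I would record at the outset: it is independent of $w$; it is additive on short exact sequences; and for every $G\in\cA(b')$ it is a \emph{non-negative integer} — integrality because $\ch_1(G).H^2$ and $\ch_0(G)$ are integers, and non-negativity because $\ch_1^{b'H}.H^2\geq0$ is exactly the positivity of the ``imaginary part'' of the central charge on the heart (this is built into Remark \ref{heart} for the generators of $\cA(b')$ and extends to all objects by additivity). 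For the object $E$ of the Lemma, $\ch_0(E)=1$ and $\ch_1(E).H^2=0$ give $\delta(E)=1$.

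First I would record the resulting dichotomy. Given any short exact sequence $0\to F\to E\to Q\to0$ in $\cA(b')$ with $F$ a nontrivial proper subobject, additivity and the previous paragraph force $\delta(F)+\delta(Q)=1$ with $\delta(F),\delta(Q)\in\Z_{\geq0}$, so $(\delta(F),\delta(Q))$ equals either $(0,1)$ or $(1,0)$, and this classification is the same for every $w$.

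Next I would use the observation that an object with $\delta=0$ has $\nu_{b',w}=+\infty$ by the convention \eqref{noo}, for every $w$; such subobjects play precisely the role that a rank-$0$ torsion subsheaf plays in destabilising a rank-$1$ sheaf. If some nontrivial $F$ has $\delta(F)=0$ then $\nu_{b',w}(F)=+\infty$ while $\nu_{b',w}(Q)$ is finite (as $\delta(Q)=1$), so $F$ strictly destabilises $E$ for \emph{every} $w$; in particular $E$ is not even $\nu_{b',w}$-semistable for any $w$. Thus the hypothesis that $E$ is $\nu_{b',w}$-semistable for some $w$ rules this case out, leaving $\delta(F)=1$ and $\delta(Q)=0$ for every nontrivial proper $F$. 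But then $\nu_{b',w}(Q)=+\infty>\nu_{b',w}(F)$ for \emph{every} $w$, so $E$ is $\nu_{b',w}$-stable for all $w>\tfrac12(b')^2$, which is the claim.

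The content of the argument is entirely in the first paragraph, and the step I would treat most carefully is the non-negativity $\delta(G)\geq0$ for \emph{all} $G\in\cA(b')$ rather than merely the stable generators, together with the observation that the potential destabilisers $F$ live in the fixed heart $\cA(b')$ and so do not move with $w$. Once these are secured, the remainder is the short case analysis above, and it is the specific value $b'=-\tfrac1{H^3}$ — the unique choice making $\delta(E)=1$ — that forces any proper subobject to send one of $\delta(F),\delta(Q)$ to $0$ and hence to infinite slope.
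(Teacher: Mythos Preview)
Your argument is correct and is precisely the ``simple numerical argument'' the paper points to just before the Lemma (and attributes to \cite[Section~8]{FT}): the twisted $\ch_1^{b'H}\!.H^2$ plays the role of rank, the special value $b'=-\tfrac1{H^3}$ makes it equal to $1$ on $E$, and any proper subobject is forced to have $\delta=0$ (hence slope $+\infty$, destabilising) or $\delta=1$ (with quotient of slope $+\infty$, so never destabilising). This is exactly the analogy with rank~$1$ sheaves that the paper invokes, so nothing further to add.
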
 

By \eqref{in. lower bound for b1}, we have $b_1>-\frac1{2H^3}>b'$, so Figure~\ref{figure. the object F1} shows that our wall of instability $\ell$ for $F$ intersects $\{b = b'\}$ at an interior point of $U$. Therefore, the $\nu_{b,w}$-semistability of $F_1$ along the line segment $\ell \cap U$ and Lemma~\ref{lem. no wall for F1} show that $F_1$ is $\nu_{b',w}$-semistable for all $w > \frac12(b')^2$.

Let $\ell_1$ be the line connecting $\Pi(F_1) = \left(0 , \frac{\ch_2(F_1)\cdot H}{H^3} \right)$ to the point $\big(b',\frac12(b')^2\big)$ where $\{b=b'\}$ intersects $\partial U$. By Proposition~\ref{prop. locally finite set of walls} the $w \downarrow \frac{1}{2(H^3)^2}$ limit of Lemma~\ref{lem. no wall for F1} shows that $F_1$ is $\nu_{b,w}$-semistable for any $(b,w) \in \ell_1 \cap U$. 

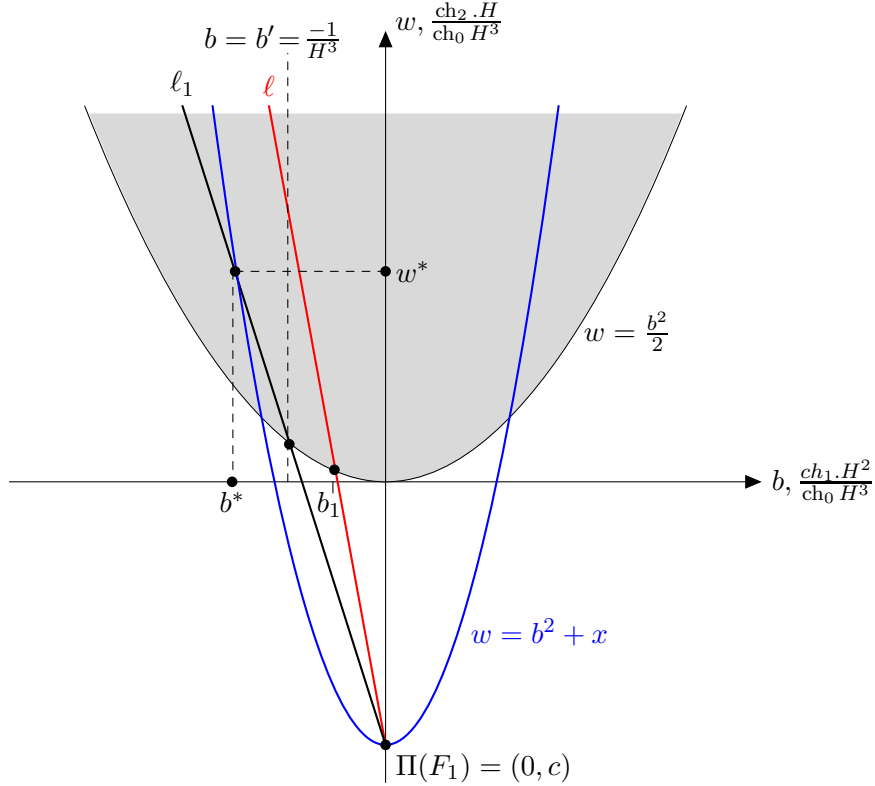
\begin{figure}[h]
	\begin{centering}
		\definecolor{zzttqq}{rgb}{0.27,0.27,0.27}
		\definecolor{qqqqff}{rgb}{0.33,0.33,0.33}
		\definecolor{uququq}{rgb}{0.25,0.25,0.25}
		\definecolor{xdxdff}{rgb}{0.66,0.66,0.66}
		
		\begin{tikzpicture}[line cap=round,line join=round,>=triangle 45,x=1.0cm,y=1.0cm]
		
		\draw[->,color=black] (-5,0) -- (5,0);
		\draw  (5, 0) node [right ] {$b, \frac{ch_1\cdot H^2}{\ch_0H^3}$};
		\fill [fill=gray!30!white] (0,0) parabola (3.94, 4.9) parabola [bend at end] (-3.94, 4.9) parabola [bend at end] (0,0);

		\draw  (0,0) parabola (4,5); 
		\draw  (0,0) parabola (-4,5); 
%
%
		\draw[->,color=black] (0,-4) -- (0,6);
		\draw  (.85, 5.7) node [above ] {$w, \frac{\ch_2\cdot H}{\ch_0H^3}$};
		\draw[color=red, thick] (0,-3.5) -- (-1.55,5);
		\draw  (-1.55,5) node [above, color=red] {$\ell$};
			
		\draw  (-.75,0) node [below] {$b_1$};
		\draw (-.7, 0) -- (-.7,-.15);
		
		\draw[color=black, thick] (0,-3.5) -- (-2.7,5);
		\draw  (-2.7,5) node [above] {$\ell_1$};
		\draw[color=black, dashed] (-1.3,0) -- (-1.3,5.7);
		\draw  (-1.5, 5.5) node [above ] {$b = b'\!=\!\frac{-1}{H^3}$};

		\draw [color=blue, thick]  (0,-3.5) parabola (-2.3,5);			
		\draw [color=blue, thick]  (0,-3.5) parabola (2.3,5);
%
%
%
%
%
		\draw[dashed, color=black] (-2, 2.8) -- (0, 2.8);
		\draw[dashed, color=black] (-2.03, 2.8) -- (-2.03, 0);
%
		\draw  (0, 2.8) node [right ] {$w^*$};
		\draw  (-2.04, 0) node [below ] {$b^*$};
        \draw  (2.5, 2) node [right] {$w = \frac{b^2}{2}$};
		\draw  (1, -2) node [right, color=blue ] {$w =b^2 + x$};
		\draw  (0, -3.8) node [right] {$\Pi(F_1)=(0,c)$};
		
		\begin{scriptsize}
%
%
%
		\fill (0,-3.5) circle (2pt);
		\fill (-.68,.16) circle (2pt);
		\fill (-2.04, 0) circle (2pt);
		\fill (0, 2.8) circle (2pt);
		\fill (-2,2.8) circle (2pt);
		\fill (-1.28,.5) circle (2pt);
%
%
%

		\end{scriptsize}
		
		\end{tikzpicture}
		
		\caption{Walls for objects of class $\vi$}
		
		\label{figure. the object F1}
		
	\end{centering}
\end{figure}

So by Conjecture~\ref{BG}\eqref{BG-2} we can apply the Bogomolov--Gieseker inequality \eqref{BGineq} to $F_1$, so long as we can find a point of $\ell_1 \cap U$ satisfying $\ch_2^{bH}(F_1)\cdot H = \left(w- \frac{1}{2}b^2\right)\ch_0(F_1)H^3$, \textit{i.e.}\
$$
\frac{\ch_2(F_1)\cdot H}{H^3} + \frac{b^2}{2}\ =\ w -\frac{b^2}{2}\,.
$$
This gives the lower parabola in blue in Figure~\ref{figure. the object F1}. It intersects $\ell_1$ at $(b^*, w^*)$, where 
\begin{equation}\label{b*w*}
    b^*\ =\ \ch_2(F_1)\cdot H - \frac{1}{2H^3}\,, \quad w^*\ =\ (b^*)^2 + \frac{\ch_2(F_1)\cdot H}{H^3}\,. 
\end{equation}
Since 
\begin{equation*}
    w^* - \frac{(b^*)^2}{2}\ =\   \frac{1}{2}\left(\ch_2(F_1)\cdot H - \frac{1}{2H^3}\right)^{\!2} + \frac{\ch_2(F_1)\cdot H}{H^3}\ >\ 0,
\end{equation*}
the point $(b^*,w^*)$ is in the interior of $U$.
As in \cite[Proposition 8.3]{FT}, Conjecture~\ref{BG}\eqref{BG-2} then gives the following.
 
\begin{Prop}\label{prop. upper bound for ch_3(F_1)}
	If\, $\beta\cdot H >0$, the destabilising sheaf\, $F_1$ satisfies
	\begin{equation*}
	\ch_3(F_1)\ \leq\ \frac{2}{3}\ch_2(F_1)\cdot H\left(\ch_2(F_1)\cdot H- \frac{1}{2H^3}  \right). \vspace{-8mm}
	\end{equation*}
	$\hfill\square$
\end{Prop}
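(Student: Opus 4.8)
The plan is to apply the Bogomolov--Gieseker inequality \eqref{BGineq}---valid here because Conjecture \ref{BG}(ii) guarantees Conjecture \ref{conjecture} for the relevant parameters---directly to the sheaf $F_1$ at the single point $(b^*,w^*)$ of \eqref{b*w*}. Almost all of the hypotheses have already been assembled: $F_1$ is a rank-one torsion-free sheaf with $\ch_1(F_1).H^2=0$, and \eqref{dagger} gives $-\ch_2(F_1).H\in[\beta.H,2\beta.H]$, so $F_1$ satisfies the numerical conditions of Conjecture \ref{BG}(ii); the parameters $b^*=\ch_2(F_1).H-\frac1{2H^3}$ and $w^*=(b^*)^2+\frac{\ch_2(F_1).H}{H^3}$ are exactly the ones prescribed there; and since $(b^*,w^*)\in\ell_1\cap U$ lies in the interior of $U$, the paragraph preceding \eqref{b*w*} shows that $F_1$ is $\nu_{b^*,w^*}$-semistable. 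Finally, $(b^*,w^*)$ was defined as the intersection of $\ell_1$ with the lower (blue) parabola, which is exactly the locus $\ch_2^{b^*H}(F_1).H=\big(w^*-\tfrac{(b^*)^2}2\big)\ch_0(F_1)H^3$, so the constraint hypothesis of Conjecture \ref{conjecture} holds.

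With these checks in place, \eqref{BGineq} applied to $F_1$ reads
\[
\ch_3^{b^*H}(F_1)\ \le\ \Big(\tfrac{w^*}3-\tfrac{(b^*)^2}6\Big)\,\ch_1^{b^*H}(F_1).H^2.
\]
The next step is to expand both twisted Chern characters via $\ch^{bH}=\ch\,e^{-bH}$, using $\ch_0(F_1)=1$ and $\ch_1(F_1).H^2=0$. On the left this gives $\ch_3^{b^*H}(F_1)=\ch_3(F_1)-b^*\ch_2(F_1).H-\frac{(b^*)^3}6H^3$, while the denominator simplifies to $\ch_1^{b^*H}(F_1).H^2=-b^*H^3$. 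Substituting $w^*$ into the coefficient yields $\frac{w^*}3-\frac{(b^*)^2}6=\frac{(b^*)^2}6+\frac{\ch_2(F_1).H}{3H^3}$, so the right-hand side equals $-\frac{(b^*)^3}6H^3-\frac{b^*\ch_2(F_1).H}3$.

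The crux is simply that the cubic terms $-\frac{(b^*)^3}6H^3$ on the two sides cancel, leaving
\[
\ch_3(F_1)-b^*\ch_2(F_1).H\ \le\ -\tfrac{b^*\ch_2(F_1).H}3,
\]
that is $\ch_3(F_1)\le\frac23\,b^*\ch_2(F_1).H$; inserting $b^*=\ch_2(F_1).H-\frac1{2H^3}$ turns this into exactly the asserted inequality. This is the local counterpart of \cite[Proposition 8.3]{FT}. There is no genuine obstacle beyond the bookkeeping---the substance all lies in the preceding pages, which pin down the range \eqref{dagger} needed to invoke Conjecture \ref{BG}(ii) and establish the semistability of $F_1$ along $\ell_1$ through Lemma \ref{lem. no wall for F1}. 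The one point to keep an eye on is that $(b^*,w^*)$ really lies inside $U$, i.e.\ the positivity $w^*-\frac{(b^*)^2}2>0$ already recorded above, since otherwise the $\nu_{b^*,w^*}$-semistability of $F_1$ would be unavailable.
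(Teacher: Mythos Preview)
Your proof is correct and follows exactly the approach outlined in the paper leading up to the Proposition (which in turn cites \cite[Proposition 8.3]{FT}): apply the Bogomolov--Gieseker inequality \eqref{BGineq} to $F_1$ at the point $(b^*,w^*)$ of \eqref{b*w*}, using the semistability of $F_1$ along $\ell_1\cap U$ and the numerical constraints from Conjecture~\ref{BG}(ii). Your expansion and cancellation of the $\ch^{b^*H}$ terms is accurate, and you correctly note that the hypothesis $\beta.H>0$ together with \eqref{dagger} is what places $F_1$ in the range required by Conjecture~\ref{BG}(ii).
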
\vspace{2mm}

\noindent A similar argument given in \cite[Proposition 8.4]{FT} gives a similar inequality for $F_2(n)$.

\begin{Prop} \label{prop. upper bound for ch_3(F_2)}
    If\, $\beta\cdot H > 0$, the destabilising object $F_2$ satisfies
    \begin{equation*}
      \ch_3(F_2(n))\ \leq\ \frac{2}{3}\ch_2(F_2(n))\cdot H\left(\ch_2(F_2(n))\cdot H + \frac{1}{2H^3}  \right). \vspace{-8mm}
	\end{equation*}
	$\hfill\square$
\end{Prop}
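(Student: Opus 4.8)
The plan is to run the proof of Proposition \ref{prop. upper bound for ch_3(F_1)} with $F_2(n)$ playing the role of $F_1$. From the proof of Proposition \ref{prop.the first wall} we know that $F_2$ is $\nu_{b_0,w_0}$-semistable on the wall $\ell$, that $\cH^{-1}(F_2)$ is a rank one sheaf while $\cH^0(F_2)$ is supported in dimension $\le1$, and that $\mu_H(F_2)=-n$. Hence $F_2(n)$ is $\nu_{b,w}$-semistable on the $\cO(n)$-translate of $\ell$, with $\ch_0(F_2(n))=-1$ and $\ch_1(F_2(n)).H^2=0$. The only structural novelty compared with $F_1$ is the \emph{negative rank}, and I would absorb it by passing to the derived dual $\mathbb D(-):=R\hom(-,\cO_X)[1]$, which returns us to the rank $+1$ world in which Conjecture \ref{BG}(ii) is phrased.

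Concretely, I would use the standard compatibility of $\mathbb D$ with these weak stability conditions: it exchanges the hearts $\cA(b)\leftrightarrow\cA(-b)$, sends $\nu_{b,w}$-semistable objects to $\nu_{-b,w}$-semistable ones, and acts on Chern characters by $(\ch_0,\ch_1,\ch_2,\ch_3)\mapsto(-\ch_0,\ch_1,-\ch_2,\ch_3)$ (an identity valid on any smooth $X$, with no Todd correction, since $\ch_k(E^\vee)=(-1)^k\ch_k(E)$). Thus $E:=\mathbb D(F_2(n))$ has rank one, $\ch_1(E).H^2=0$, $\ch_2(E).H=-\ch_2(F_2(n)).H$ and $\ch_3(E)=\ch_3(F_2(n))$. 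I would then repeat, for $E$, the construction preceding Proposition \ref{prop. upper bound for ch_3(F_1)}: invoke Lemma \ref{lem. no wall for F1} for $E$ exactly as for $F_1$ to propagate $\nu_{b,w}$-semistability along the line $\ell_2$ joining $\Pi(E)$ to the tangency point $\big(-\frac1{H^3},\frac1{2(H^3)^2}\big)$ on $\partial U$, locate the intersection $(b^*,w^*)$ of $\ell_2$ with the parabola $\ch_2^{bH}(E).H=(w-\tfrac{b^2}2)\ch_0(E)H^3$ exactly as in \eqref{b*w*}, and apply the Bogomolov--Gieseker inequality \eqref{BGineq} there through Conjecture \ref{BG}(ii). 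This produces $\ch_3(E)\le\frac23\ch_2(E).H\big(\ch_2(E).H-\frac1{2H^3}\big)$; substituting $\ch_2(E).H=-\ch_2(F_2(n)).H$ and $\ch_3(E)=\ch_3(F_2(n))$ and using $(-c)(-c-\frac1{2H^3})=c(c+\frac1{2H^3})$ turns the $-\frac1{2H^3}$ into the $+\frac1{2H^3}$ of the statement. So the sign flip is exactly the $\ch_2\mapsto-\ch_2$ of duality.

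The main obstacle is making this reduction legitimate, since $F_2(n)$ is a genuine two-term complex rather than a torsion-free sheaf. I would first check that $E=\mathbb D(F_2(n))$ is, up to codimension $\ge2$ contributions coming from $R\hom(\cH^0(F_2),\cO_X)$, a rank one torsion-free sheaf: here one uses that $\cH^0(F_2)$ sits in dimension $\le1$ and so contributes to $\mathbb D$ only in strictly positive cohomological degrees, leaving $\cH^0(E)$ as a subsheaf of the reflexive dual of the rank one sheaf $\cH^{-1}(F_2)(n)$. The genuinely delicate point is then to verify that this sheaf satisfies the numerical hypothesis of Conjecture \ref{BG}(ii), namely that $-\ch_2(E).H$ lands in the assumed window; this must be extracted from the a priori bounds \eqref{dagger} together with the identity $\ch(F_1)+\ch(F_2)=\vi_n$, which yields $\ch_2(F_2(n)).H=-\beta.H-\ch_2(F_1).H$. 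Granting these two checks, the remaining line-and-parabola estimate is identical to the $F_1$ case recorded in Figure \ref{figure. the object F1} and \eqref{b*w*}.
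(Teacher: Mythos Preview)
Your overall strategy---dualize $F_2(n)$ and rerun the $F_1$ argument---is sound and is close in spirit to what the paper does. But the paper (via \cite[Proposition 8.4]{FT}) does \emph{not} dualize: as explained in Section \ref{related}, it works with $F_2$ directly on the mirror vertical line $b=-n+\frac1{H^3}$ (equivalently with $F_2(n)$ on $b=+\frac1{H^3}$), exploiting the second inequality of \eqref{in. lower bound for b1}. The two routes are equivalent under the involution $\cA(b)\leftrightarrow\cA(-b)$, so this is a cosmetic difference. What is not cosmetic is your appeal to Conjecture \ref{BG}(ii) as stated.

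You compute $\ch_2(F_2(n)).H=-\beta.H-\ch_2(F_1).H$, and by \eqref{dagger} this lies in $[0,\beta.H]$. Hence for your dual object $E$ one has $-\ch_2(E).H=\ch_2(F_2(n)).H\in[0,\beta.H]$, \emph{not} the window $[\beta.H,2\beta.H]$ required in Conjecture \ref{BG}(ii). So the very check you flag as ``genuinely delicate'' in fact fails: the hypotheses of \ref{BG}(ii) are not met, and you cannot invoke it verbatim. (Note also that your duality statement is slightly too clean: $\mathbb D(F_2(n))$ is only a sheaf up to the zero-dimensional correction $Q[-1]$ of \cite[Lemma 5.1.3(b)]{BMT}, as used later in Lemma \ref{lem. ch3}; one must check---as you hint but do not verify---that the resulting $\ch_3$ shift has the right sign, and that the extracted sheaf $E$ really is torsion-free.)

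The paper's direct approach sidesteps this by applying the Bogomolov--Gieseker inequality to $F_2$ itself on the line $b=-n+\frac1{H^3}$, which amounts to a mirror version of \ref{BG}(ii) for rank $-1$ objects at the reflected point; this is what \cite[Proposition 8.4]{FT} supplies. If you want to keep your duality formulation, you would need to state and assume the corresponding instance of Conjecture \ref{conjecture} for $E$ with $-\ch_2(E).H\in(0,\beta.H]$, rather than the range in \ref{BG}(ii). That is a genuine extra hypothesis, not something that drops out of \eqref{dagger}.
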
\vspace{2mm}

\begin{Prop}\label{prop. exact value of ch2}
     We have $\ch_2(F_1)\cdot H = -\beta\cdot H$ and $\ch_2(F_2(n))\cdot H=0$.
\end{Prop}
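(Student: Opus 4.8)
The plan is to upgrade the bound $\ch_2(F_1).H\le-\beta.H$ from \eqref{dagger} to an equality by playing the two Bogomolov--Gieseker estimates of Propositions \ref{prop. upper bound for ch_3(F_1)} and \ref{prop. upper bound for ch_3(F_2)} against the fixed total $\ch_3(F)=\ch_3(F_1)+\ch_3(F_2)$. First note that if $\beta.H=0$ then the two inequalities in \eqref{dagger} collapse to $\ch_2(F_1).H=0=-\beta.H$ and there is nothing to prove; so I would assume $\beta.H>0$ and write $a:=\ch_2(F_1).H\in[-2\beta.H,-\beta.H]$, aiming to show $a=-\beta.H$.

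Next I would translate everything into $a$ and $n$. From $\ch(F)=\ch(F_1)+\ch(F_2)$ together with $\ch_0(F_1)=1$ and $\ch_1(F_1).H^2=0$ one reads off $\ch_0(F_2)=-1$, $\ch_1(F_2).H^2=nH^3$, and $\ch_2(F_2).H=-\beta.H-\tfrac12 n^2H^3-a$. Applying the twist formula $\ch(F_2(n))=\ch(F_2)e^{nH}$ and contracting with $H,H^2$ gives $\ch_1(F_2(n)).H^2=0$ and, once the $\tfrac12 n^2H^3$ terms cancel,
$$
\ch_2(F_2(n)).H\ =\ -\beta.H-a.
$$
A parallel computation of $\ch_3(F_2(n))$, using $\ch_3(F_1)+\ch_3(F_2)=\ch_3(F)=-m+\tfrac16 n^3H^3$, collapses the cubic and quadratic powers of $n$ and leaves the single clean identity
$$
\ch_3(F_1)+\ch_3(F_2(n))\ =\ -m-n\beta.H-na.
$$

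The heart of the argument is then to add the two upper bounds. Setting $s:=-\beta.H-a\ge0$ (so that $\ch_2(F_2(n)).H=s$ and $a=-\beta.H-s$), Propositions \ref{prop. upper bound for ch_3(F_1)} and \ref{prop. upper bound for ch_3(F_2)} sum to
$$
\ch_3(F_1)+\ch_3(F_2(n))\ \le\ \tfrac23\Big[a\big(a-\tfrac{1}{2H^3}\big)+s\big(s+\tfrac{1}{2H^3}\big)\Big].
$$
The left-hand side equals $-m+ns$ by the identity above, while the right-hand side is a polynomial in $s$ carrying no $n$-dependence; since $0\le s\le\beta.H$ it is bounded by a constant $K=K(\beta.H,H^3)$ independent of $n$. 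Hence $ns\le K+m$. To finish I would invoke integrality: as $F_1$ is rank-one torsion-free, $\ch_2(F_1).H=\tfrac12 c_1(F_1)^2.H-c_2(F_1).H\in\tfrac12\Z$, and $\beta.H\in\Z$, so $s\in\tfrac12\Z_{\ge0}$. Thus $s=0$ or $s\ge\tfrac12$; but $s\ge\tfrac12$ would force $n\le2(K+m)$, contradicting $n\gg0$. Therefore $s=0$, i.e. $\ch_2(F_1).H=-\beta.H$.

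The main obstacle is essentially bookkeeping: one must check that in both $\ch_2(F_2(n)).H$ and $\ch_3(F_1)+\ch_3(F_2(n))$ all the high powers of $n$ cancel, so that only the harmless linear term $ns$ survives on the left. The conceptual point that is easy to overlook is the final step — the quantitative estimate only yields $s\to0$, and it is the integrality of $\ch_2(F_1).H$ that converts this into the exact equality once $n$ is large.
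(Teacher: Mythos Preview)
Your proposal is correct and follows the same line as the paper's proof: compute $\ch_2(F_2(n)).H$ and $\ch_3(F_2(n))$, add the two Bogomolov--Gieseker bounds for $F_1$ and $F_2(n)$, and observe that the resulting inequality has a term $n\cdot(-\beta.H-a)$ on one side against an $n$-independent bound on the other, forcing $a=-\beta.H$ for $n\gg0$. Your write-up is in fact slightly more careful than the paper's, which leaves the final discreteness step (``contradiction for $n\gg0$ unless $-(\beta.H+c)=0$'') implicit; your observation that $s\in\tfrac12\Z_{\ge0}$ makes precise why $s$ cannot be small and positive.
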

\begin{proof}  Set $c := \ch_2(F_1)\cdot H$.
    If $\beta\cdot H = 0$, then \eqref{dagger} gives $c = -\beta\cdot H = 0$. So we now assume $\beta\cdot H >0$.        Using $\ch_0(F_1) =1$, $\ch_1(F_1)\cdot H^2 =0$ and the exact triangle $F_1\to F \to F_2$, we compute 
    \begin{align}\label{chi}
      \ch_1(F_2(n))\cdot H^2\ =\ 0, \quad \ch_2(F_2(n))\cdot H\ =&\ -\beta\cdot H -c \\
      \text{and} \quad \ch_3(F_2(n))\ =&\ -m- \ch_3(F_1) - n(\beta\cdot H +c).
    \end{align}
Thus Proposition~\ref{prop. upper bound for ch_3(F_2)} becomes
    \begin{equation}\label{mbeta}
     -m- \ch_3(F_1) - n(\beta\cdot H +c)\ \leq\ \frac{2}{3}(\beta\cdot H +c) \left(\beta\cdot H +c \ - \frac{1}{2H^3}  \right),
    \end{equation}
while  Proposition~\ref{prop. upper bound for ch_3(F_1)} says
    \begin{equation}\label{in. ch3 bound}
      \ch_3(F_1)\ \leq\ \frac{2}{3}c \left( c- \frac{1}{2H^3} \right). 
    \end{equation}
    Combining the two gives
    \begin{equation}\label{in. bounds}
-m-\frac{2}{3}c \left( c- \frac{1}{2H^3} \right)- n(\beta\cdot H +c)\ \leq\ \frac{2}{3}(\beta\cdot H +c) \left(\beta\cdot H +c \ - \frac{1}{2H^3}  \right).
    \end{equation}
By  Proposition~\ref{prop.the first wall} we have $c\in[-2\beta\cdot H,-\beta\cdot H]$, so the right-hand side of \eqref{in. bounds} is bounded while on the left-hand side, $n$ appears multiplied by $-(\beta\cdot H+c)\ge0$. This gives a contradiction for $n\gg0$ unless $-(\beta\cdot H+c)=0$.
\end{proof}

\begin{Lem}\label{lem. exact value of ch1}
The sheaf\, $\cH^0(F_2)$ is supported in dimension 0, and
$$
\ch_1(\cH^{-1}(F_2))\ =\ -nH \quad\text{in\ } H^2(X,\Q).
$$
	\end{Lem}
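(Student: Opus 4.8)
The plan is to isolate a single identity among degree-two Chern characters and then kill all the unknowns at once by a sign squeeze, combining the Hodge index theorem with the effectivity of codimension-$\ge2$ loci. No further input beyond Proposition \ref{prop. exact value of ch2} and the structure already in \eqref{long exact} should be needed; in particular the $\ch_3$-bounds of Propositions \ref{prop. upper bound for ch_3(F_1)}--\ref{prop. upper bound for ch_3(F_2)} are not used.

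First I would record the relevant structure. From \eqref{long exact} the subobject $\cH^{-1}(F_2)$ injects into the torsion-free sheaf $F_1$, so it is itself torsion-free, of rank one since $\ch_0(F_1)=1$ and $\ch_0(F)=0$ force $\ch_0(\cH^{-1}(F_2))=1$. Hence on the smooth threefold $X$ I may write $\cH^{-1}(F_2)=I_W\otimes M$ for a line bundle $M$ and a subscheme $W$ of dimension $\le1$, giving
$$
\ch_2(\cH^{-1}(F_2)).H\ =\ \tfrac12\,c_1(M)^2.H - [W].H, \qquad [W].H\ \ge\ 0.
$$
Meanwhile $\cH^0(F_2)$ is supported in dimension $\le1$ by Proposition \ref{prop.the first wall}, so $\ch_1(\cH^0(F_2))=0$ and $\ch_2(\cH^0(F_2)).H\ge0$, with equality precisely when it is supported in dimension zero.

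Next I would compute in $K(X)$. The triangle $F_1\to F\to F_2$ gives $\ch(F)=\ch(F_1)+\ch(\cH^0(F_2))-\ch(\cH^{-1}(F_2))$. Comparing $\ch_1$ and using $\ch_1(\cH^0(F_2))=0$ yields $\ch_1(\cH^{-1}(F_2))=\ch_1(F_1)-nH$, whence (since $\ch_1(F_1).H^2=0$ by \eqref{dagger}) $c_1(M)^2.H=\ch_1(F_1)^2.H+n^2H^3$. Comparing $\ch_2(\,\cdot\,).H$ and cancelling the $-\beta.H$ supplied by $\ch_2(F_1).H=-\beta.H$ (Proposition \ref{prop. exact value of ch2}) against the $-\beta.H$ in $\ch_2(F).H=-\beta.H-\tfrac{n^2}{2}H^3$, and then substituting the decomposition above, I arrive at the clean identity
$$
\ch_2(\cH^0(F_2)).H\ =\ \tfrac12\,\ch_1(F_1)^2.H - [W].H.
$$
The crux is a sign squeeze on this identity: the left-hand side is $\ge0$, while on the right $[W].H\ge0$ and the Hodge index theorem applied to the $H$-primitive class $\ch_1(F_1)$ gives $\ch_1(F_1)^2.H\le0$. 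Hence both sides vanish and each term vanishes separately: $\ch_2(\cH^0(F_2)).H=0$, $[W].H=0$, and $\ch_1(F_1)^2.H=0$. The first shows $\cH^0(F_2)$ is supported in dimension zero, and the equality case of Hodge index applied to $\ch_1(F_1)$ (with $\ch_1(F_1).H^2=\ch_1(F_1)^2.H=0$) forces $\ch_1(F_1)=0$ in $H^2(X,\Q)$, so $\ch_1(\cH^{-1}(F_2))=\ch_1(F_1)-nH=-nH$.

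The main obstacle I anticipate is the final Hodge-index step: one must pass rigorously from $\ch_1(F_1).H^2=0$ and $\ch_1(F_1)^2.H=0$ to $\ch_1(F_1)=0$ in $H^2(X,\Q)$, i.e. invoke the negative definiteness of the form $D\mapsto D^2.H$ on $H$-primitive Néron--Severi classes (seen by restricting to a general surface in $|mH|$ and using injectivity of the restriction map) together with its equality case. The remainder is Chern-character bookkeeping; the only other point to check carefully is the identification $\cH^{-1}(F_2)=I_W\otimes M$ with $[W].H\ge0$, which rests solely on its being a rank-one torsion-free sheaf on the smooth threefold $X$.
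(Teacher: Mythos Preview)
Your argument is correct and is essentially the paper's own proof: both combine the Bogomolov inequality for the rank-one torsion-free sheaf $\cH^{-1}(F_2)$ (which is exactly your $[W].H\ge0$ after writing it as $I_W\otimes M$), the Hodge index theorem, and the effectivity $\ch_2(\cH^0(F_2)).H\ge0$ into a sign squeeze. The only cosmetic difference is that you apply Hodge index directly to the $H$-primitive class $\ch_1(F_1)$ rather than to $\ch_1(\cH^{-1}(F_2))=\ch_1(F_1)-nH$, which slightly streamlines the bookkeeping.
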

	
\begin{proof}
Proposition~\ref{prop.the first wall} and the long exact sequence \eqref{long exact} show that $\cH^{-1}(F_2)$ is a torsion-free sheaf of rank 1. Therefore, it is $\mu\_H$-semistable, and the classical Bogomolov inequality applies, 
	\begin{equation}\label{condition 2}
	\ch_1(\cH^{-1}(F_2))^2\cdot H -2\ch_2(\cH^{-1}(F_2))\cdot H\ \geq\ 0. 
	\end{equation}
By \eqref{long exact} again, $\ch_i(\cH^{-1}(F_2))=\ch_i(F_1)-\ch_i(F)+\ch_i(\cH^0(F_2))$. 	Take $i=2$ and intersect with $H$; then Proposition~\ref{prop. exact value of ch2} gives
	\begin{equation}\label{ch2}
	\ch_2(\cH^{-1}(F_2))\cdot H\ =\
	-\beta\cdot H + \beta\cdot H +\frac{n^2H^3}{2} + \ch_2(\cH^0(F_2))\cdot H\ =\ \frac{n^2H^3}{2} + \ch_2(\cH^0(F_2))\cdot H.
	\end{equation}
Take $i=1$ and intersect with $H^2$; then Proposition~\ref{prop.the first wall} kills the first and third terms, so
\begin{equation}\label{in. ch1}
\ch_1(\cH^{-1}(F_2))\cdot H^2\ =\ -nH^3.
\end{equation}
Therefore, by the Hodge index theorem,
\begin{equation}\label{ch11}
n^2H^3\ =\ \frac{\left(\!\ch_1(\cH^{-1}(F_2))\cdot H^2\right)^2}{H^3}\ \ge\ \ch_1(\cH^{-1}(F_2))^2\cdot H,
\end{equation}
with equality if and only if $\ch_1(\cH^{-1}(F_2))$ is a multiple of $H$ in $H^2(X,\Q)$.

Combining \eqref{condition 2}, \eqref{ch2} and \eqref{ch11} gives
\begin{equation}\label{<>0}
\ch_2(\cH^0(F_2))\cdot H\ \le\ 0.
\end{equation}
But $\dim\supp\cH^0(F_2)\le1$ by Proposition~\ref{prop.the first wall}, so this shows  $\dim\supp\cH^0(F_2)=0$, and (\ref{<>0}) and~(\ref{ch11}) are equalities. Thus $\ch_1(\cH^{-1}(F_2))$ is a multiple of $H$ in $H^2(X,\Q)$, and by \eqref{in. ch1} that multiple is $nH$.
	\end{proof}

\begin{Lem}\label{lem. ch3}
We have $\ch_3(F_2)\le\frac16n^3H^3$. 
\end{Lem}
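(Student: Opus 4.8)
The plan is to turn the inequality into a lower bound on $\ch_3(F_1)$ and then invoke the Bogomolov--Gieseker estimate already set up for $F_2(n)$. Since $\ch_3(F)=\frac16n^3H^3-m$ is determined by $\vi_n$, additivity of the Chern character along the triangle $F_1\to F\to F_2$ gives $\ch_3(F_2)=\ch_3(F)-\ch_3(F_1)=\frac16n^3H^3-m-\ch_3(F_1)$. Hence the claim $\ch_3(F_2)\le\frac16n^3H^3$ is \emph{equivalent} to
\[
\ch_3(F_1)\ \ge\ -m,
\]
and it is this lower bound that I would aim to establish.

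To produce it I would work with $F_2(n)$. Writing $c:=\ch_2(F_1).H$, the computations \eqref{chi} from the proof of Proposition \ref{prop. exact value of ch2} record
\[
\ch_2(F_2(n)).H\ =\ -\beta.H-c \qquad\text{and}\qquad \ch_3(F_2(n))\ =\ -m-\ch_3(F_1)-n(\beta.H+c).
\]
The decisive input is Proposition \ref{prop. exact value of ch2}, which gives $c=-\beta.H$, so that $\beta.H+c=0$. This simultaneously forces $\ch_2(F_2(n)).H=0$ and simplifies the second formula to $\ch_3(F_2(n))=-m-\ch_3(F_1)$. I would then feed $\ch_2(F_2(n)).H=0$ into Proposition \ref{prop. upper bound for ch_3(F_2)}: its right-hand side is $\frac23\ch_2(F_2(n)).H\big(\ch_2(F_2(n)).H+\frac1{2H^3}\big)$, which vanishes, so the proposition collapses to $\ch_3(F_2(n))\le0$. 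Reading this back through $\ch_3(F_2(n))=-m-\ch_3(F_1)$ yields exactly $\ch_3(F_1)\ge-m$, and with it the lemma.

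The one delicate point, which I expect to be the main obstacle, is that Proposition \ref{prop. upper bound for ch_3(F_2)} is only available when $\beta.H>0$, so the clean collapse of the Bogomolov--Gieseker bound is unavailable in the borderline case $\beta.H=0$. It is worth emphasising that the \emph{reverse} inequality $\ch_3(F_2)\ge\frac16n^3H^3$ holds in general and essentially for free: by Lemma \ref{lem. exact value of ch1} the sheaf $\cH^{-1}(F_2)(n)$ is torsion-free of rank one with vanishing $\ch_1.H^2$ and $\ch_2.H$, hence (on the smooth threefold $X$) a torsion line bundle twisted by the ideal sheaf of a zero-dimensional subscheme, so its $\ch_3$ is non-positive, which pushes $\ch_3(F_2)$ up. Thus the genuine content of the lemma is precisely the $\le$ direction, and that is exactly where the Bogomolov--Gieseker input enters. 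Handling $\beta.H=0$ therefore requires either the boundary case of Conjecture \ref{BG}(\emph{ii}) or a direct analysis of this degenerate, essentially zero-dimensional, situation (here $c=0$ by \eqref{dagger} and $F_1$ is a torsion twist of the ideal sheaf of points). Combined with the easy reverse inequality, the lemma will pin down $\ch_3(F_2)=\frac16n^3H^3$ and identify $F_2$ with a shift of $\cO(-n)$ up to twist, which is the identification the subsequent argument is driving toward.
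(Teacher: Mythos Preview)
For $\beta.H>0$ your argument is correct and more direct than the paper's: once Proposition \ref{prop. exact value of ch2} forces $\ch_2(F_2(n)).H=0$, plugging this into Proposition \ref{prop. upper bound for ch_3(F_2)} collapses the right-hand side to zero and gives $\ch_3(F_2(n))\le0$, equivalently $\ch_3(F_2)\le\frac16n^3H^3$. This is a clean reuse of machinery already in place.

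The paper, however, takes a different route that handles all $\beta.H\ge0$ uniformly, without appealing to Proposition \ref{prop. upper bound for ch_3(F_2)} (and hence without Conjecture \ref{BG}(\emph{ii})). It dualises: using large-volume semistability of $F_2(n)$ (cited from \cite{FT}) together with \cite[Lemma 5.1.3(b)]{BMT} and \cite[Lemma 2.7]{BMS}, the shifted derived dual $F_2(n)^\vee[1]$ sits in a triangle $E\hookrightarrow F_2(n)^\vee[1]\twoheadrightarrow Q[-1]$ with $Q$ zero-dimensional and $E$ rank-one torsion-free. Lemma \ref{lem. exact value of ch1} and Proposition \ref{prop. exact value of ch2} then give $\ch_1(E)=0$ in $H^2(X,\Q)$ and $\ch_2(E).H=0$, so $E$ is (a torsion twist of) an ideal sheaf of points, hence $\ch_3(E)\le0$, and this unwinds to $\ch_3(F_2(n))\le0$.

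Your proposed patch for $\beta.H=0$ via ``direct analysis'' of $\cH^{-1}(F_2)(n)$ does not close the gap: as you yourself note, that analysis yields the \emph{reverse} inequality $\ch_3(F_2)\ge\frac16n^3H^3$, not the one the lemma asserts. The point of the paper's dualisation is precisely that passing to $F_2(n)^\vee[1]$ flips the sign, turning the same style of ``rank-one torsion-free with trivial $\ch_1,\ch_2.H$'' analysis into the correct bound. So either extend the hypothesis of Conjecture \ref{BG}(\emph{ii}) to $\beta.H=0$ (which the paper does not do), or replace the appeal to Proposition \ref{prop. upper bound for ch_3(F_2)} by the derived-dual argument to get a proof valid in all cases.
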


\begin{proof}
Lemma~8.2 of \cite{FT} implies that $F_2(n)\in \cA(-b')$ is $\nu_{-b', w}$-semistable for $b' = -\frac{1}{H^3}$ and $w \gg 0$. Therefore, by \cite[Lemma 5.1.3(b)]{BMT} the shifted derived dual $F_2(n)^{\vee}[1]$ lies in an exact triangle 
  \begin{equation}\label{Q}
     E\ \Into\, F_2(n)^{\vee}[1]\, \To\hspace{-5.5mm}\To\, Q[-1],
  \end{equation}
with $Q$ a 0-dimensional sheaf and $E$ a $\nu\_{b',w}$-semistable object of $\cA(b')$ for $w \gg 0$. Since $\rk E=1$, it is a torsion-free sheaf by \cite[Lemma 2.7]{BMS}. Moreover, Lemma~\ref{lem. exact value of ch1} and  Proposition~\ref{prop. exact value of ch2} give 
$$
\ch_1(E)\ =\ 0\ \,\text{in\ } H^2(X,\Q), \quad \ch_2(E)\cdot H\ =\ 0. 
$$
Hence $\ch_3(E) \leq 0$, which by \eqref{Q} gives $\ch_3(F_2(n)) \leq 0$. 
\end{proof}

We are finally ready to identify the destabilising sequence for $F$.
By Lemma~\ref{lem. exact value of ch1} there is a  subscheme $Z\subset X$ of dimension at most 1 such that
\begin{equation}\label{TC}
\cH^{-1}(F_2)\ \cong\ L(-n)\otimes I_Z
\end{equation}
for some line bundle $L$ with $c_1(L)=0\in H^2(X,\Q)$. By \eqref{ch2} we find $\ch_2(L(-n)\otimes I_Z)\cdot H=\frac12n^2H^3=\ch_2(L(-n))\cdot H$, so in fact $Z$ is 0-dimensional. If it were non-empty, then $\nu_{b_0,w}(\cO_Z)=+\infty$, so combining the $\cA(b_0)$-short exact sequences
$$
\cO_Z\Into\cH^{-1}(F_2)[1]\Onto L(-n)[1] \quad\text{and}\quad \cH^{-1}(F_2)[1]\Into F_2\Onto\cH^0(F_2)
$$
gives the destabilising subobject $\cO_Z\into F_2$. This contradicts the $\nu_{b_0,w_0}$-semistability of $F_2$, so in fact $Z=\emptyset$. Therefore, $\cH^{-1}(F_2)\cong L(-n)$, which has $\ch_3=-\frac16n^3H^3$.

Since $\ch_3(F_2)\le\frac16n^3H^3$ by Lemma~\ref{lem. ch3}, this gives $\ch_3(\cH^0(F_2))\le0$. But by Lemma~\ref{lem. exact value of ch1} $\cH^0(F_2)$ is 0-dimensional, so it vanishes and
$$
F_2\ \cong\ L(-n)[1].
$$
Thus our destabilising sequence in $\cA(b_0)$ is
\begin{equation}\label{at last}
I\Into F\Onto L(-n)[1]
\end{equation}
for some $L\in\Pic\_0(X)$ and rank 1 torsion-free sheaf $I:=F_1\in I_m(X,\beta)\times\Pic\_0(X)$ of Chern character $\vi = (1, 0, -\beta , -m)$. Therefore, $F\otimes L^*$ is (the cokernel of) a Joyce--Song pair $\cO(-n)\to I\otimes L^*$.

\subsection{Uniqueness} To finish the proof of Theorem~\ref{Theorem. part 1}, we should prove the uniqueness of the sequence \eqref{at last} and the slope stability of $F$.

By \cite[Lemma 2.7(c)]{BMS} the slope semistable sheaf $I$ is $\nu_{b', w}$-semistable for $w \gg 0$. Let $\ell'$ be the red line segment in Figure~\ref{figure.walls for class v} which passes through $\Pi(\cO_X(-n))$ and $\Pi(v)$. By Lemma~\ref{lem. no wall for F1} --- and the fact noted there that $\ell'\cap U$ intersects $\{b=b'\}$ --- it is strictly stable all along $\ell'\cap U$. In particular, it is $\nu_{b_0,w_0}$-stable. The same is true of $L(-n)[1]$, by \cite[Corollary 3.11(a)]{BMS}. That is,
\begin{equation}\label{staybul}
I\text{ and }L(-n)[1]\text{ are }\nu_{b_0, w_0}\text{-stable of the same phase.}
\end{equation}
As we move below the wall $\ell$ to $w=w_0-\epsilon$, they remain stable for $0<\epsilon\ll1$ but 
    \begin{equation*}
    \nu\_{b_0, w_0-\epsilon}(I)\ >\ \nu\_{b_0, w_0-\epsilon}\left(L(-n)[1]\right)
    \end{equation*}
   by an elementary calculation with \eqref{noo}. Therefore, \eqref{at last} is the Harder--Narasimhan filtration of $F$ with respect to $\nu_{b_0,w_0-\epsilon}$. The uniqueness of the Harder--Narasimhan filtration gives the uniqueness of $I$ and $L$.

    \subsection{Slope stability} It remains to prove that $F$ is not strictly slope semistable in the sense of \eqref{nuslope}. Suppose $F \twoheadrightarrow F'$ is a proper quotient sheaf with $\nu\_H(F') = \nu\_H(F)$.    
    
     Since $\rk\,F'=0=\rk\,F$, the formula \eqref{noo} gives
    $$
    \nu\_{b,w}(F')\ =\ \nu\_H(F')\ =\ \nu\_H(F)\ =\ \nu\_{b,w}(F)
    $$
    for all $(b,w) \in U$. Since all torsion sheaves are in $\cA(b_0)$, $F'$ is a quotient of $F$ in the abelian category $\cA(b_0)$, and any quotient of $F'$ in $\mathcal{A}(b_0)$ is also a quotient of $F$. Therefore, $F'$ is also $\nu_{b_0, w_0}$-semistable.
    Since $I$ is $\nu_{b_0,w_0}$-stable, the composition
$$
I \Into F \Onto F'
$$
in $\cA(b_0)$ must be either zero or injective. And it cannot be zero because this would give a surjection $L(-n)[1] \twoheadrightarrow F'$ in $\mathcal{A}(b_0)$, contradicting the $\nu_{b_0,w_0}$-stability \eqref{staybul} of $L(-n)[1]$.
    
   So it is injective. Let $C$ denote its cokernel in $\cA(b_0)$, sitting in a commutative diagram
 	    \begin{equation*}
	\xymatrix@R=16pt{
		\,I\,\ar@{^{(}->}[r]\ar@{=}[d] & F \ar@{->>}[r]\ar@{->>}[d]&L(-n)[1]\ar@{->>}[d]\\
		\,I\,\ar@{^{(}->}[r] &F'\ar@{->>}[r]&\,C\rlap{.}}
	\end{equation*}
         	Since $F'$ and $I$ are $\nu_{b_0,w_0}$-semistable of the same phase, $C$ is also $\nu_{b_0,w_0}$-semistable.
Therefore, the right-hand surjection contradicts the $\nu_{b_0,w_0}$-stability \eqref{staybul} of $L(-n)[1]$.

\section{Proof of the main theorem}
In this section we prove the rest of Theorem~\ref{theorem.1}. Let $\js_n(\vi)$ be the moduli space of pairs $(I, s)$, where $I=I_C\otimes T$ is a torsion-free sheaf of Chern character $\vi = (1, 0, -\beta, -m)$ and $s \colon \mathcal{O}_X(-n) \rightarrow I$ is a non-zero section. Since there are no strictly semistable sheaves of rank 1, this is a special case of the projective moduli space constructed in \cite[Section 12.1]{JS}. For $n\gg0$ it is a projective bundle over $I_m(X,\beta)\times\Pic\_0(X)$ with fibre $\PP\big(H^0(I(n))\big)$; see \cite[Lemma 3.2]{GST} for instance. For any such pair $(I,s)$ the cokernel $\cok(s)$ is a sheaf of Chern character $\vi_n$. 

\begin{Prop}\label{prop.converse}
   Take a pair $(I, s) \in \js_n(\vi)$ for $n\gg0$. Then $\cok(s)$ is slope stable. 
\end{Prop}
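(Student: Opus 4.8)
The plan is to reverse the wall-crossing analysis of the previous section. Since $\cO(-n)$ and $I$ are rank-one torsion-free sheaves, the nonzero section $s$ is injective, so we have a short exact sequence of sheaves $0\to\cO(-n)\xrightarrow{\,s\,}I\to F\to0$ with $F=\cok(s)$. Rotating the associated triangle gives, for $b=b_0$, a short exact sequence
\begin{equation*}
I\ \Into\ F\ \Onto\ \cO(-n)[1]
\end{equation*}
in $\cA(b_0)$: indeed $F$ is a torsion sheaf and hence lies in every heart, $\mu_H^{-}(I)=0>b_0$, and $\mu_H^{+}(\cO(-n))=-n\le b_0$ for $n\gg0$, so all three objects lie in $\cA(b_0)$. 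Its extension class in $\Ext^1_{\cA(b_0)}\big(\cO(-n)[1],I\big)=\Hom(\cO(-n),I)$ is exactly $s\ne0$, so the extension is \emph{non-split} — this is the only place the hypothesis $s\ne0$ is used, and it is what will force stability rather than mere semistability.

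First I would recover the stability of the two outer terms on the wall. By the same argument already used for \eqref{staybul} — Lemma \ref{lem. no wall for F1} applied to $I$ together with \cite[Corollary 3.11(a)]{BMS} for $\cO(-n)[1]$ — both $I$ and $\cO(-n)[1]$ are $\nu_{b_0,w_0}$-stable of the same phase, where $w_0=\tfrac{n^2}{4}+\big(\tfrac{\beta.H}{nH^3}\big)^2$ is the height at which the line through $\Pi(\vi)$ and $\Pi(\cO(-n))$ meets $\{b=b_0\}$. An extension of two semistable objects of equal phase is semistable, so $F$ is $\nu_{b_0,w_0}$-semistable. Since $\ch_0(F)=0$ we have $\nu_{b_0,w}(F)=b_0$ for all $w$, and a direct computation with \eqref{noo} (the sign-reversed version of the one used for the Harder–Narasimhan filtration below the wall) gives, for every $w>w_0$,
\begin{equation*}
\nu_{b_0,w}(I)\ <\ \nu_{b_0,w}(F)=b_0\ <\ \nu_{b_0,w}\big(\cO(-n)[1]\big).
\end{equation*}

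Finally I would cross the wall upward and conclude. Moving to $w=w_0+\epsilon$, any subobject destabilising $F$ has $\nu_{b_0,w_0}$-slope equal to $b_0$ (by continuity and the semistability at $w_0$), and local finiteness of walls lets us take it $\nu_{b_0,w_0}$-semistable; an elementary classification of the phase-$b_0$ subobjects of the non-split extension $F$ shows the only proper such subobject is $I$ itself, whose slope has just dropped strictly below $b_0$ and so no longer destabilises. Hence $F$ is $\nu_{b_0,w_0+\epsilon}$-stable. The real obstacle is to rule out any \emph{further} wall above $w_0$: this is exactly the bound in Proposition \ref{prop.the first wall}, whose proof of \eqref{in. upper bound for x} applies the classical Bogomolov–Gieseker inequality \eqref{discr} to the destabilising quotient on any wall of a class-$\vi_n$ object lying above $w_f$, forcing that wall to satisfy $w\le w_0$. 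Thus $F$ has no wall in $(w_0,\infty)$, so it remains $\nu_{b_0,w}$-stable for all $w>w_0$ and in particular for $w\gg0$. By the large-volume characterisation of slope stability recalled at the start of the previous section, $F=\cok(s)$ is slope stable.
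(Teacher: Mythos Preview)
Your argument is correct in outline, but it takes a substantially longer route than the paper. Both proofs begin identically: the short exact sequence $I\hookrightarrow\cok(s)\twoheadrightarrow\cO(-n)[1]$ in $\cA(b_0)$, together with the $\nu_{b_0,w_0}$-stability of the two outer terms (established exactly as in \eqref{staybul}), shows $\cok(s)$ is $\nu_{b_0,w_0}$-semistable. At this point the paper stops and cashes in: since any torsion quotient $\cok(s)\twoheadrightarrow F'$ lies in $\cA(b_0)$ with $\nu_{b_0,w_0}(F')=\nu\_H(F')$, the $\nu_{b_0,w_0}$-semistability immediately gives $\nu\_H$-slope semistability, and then Theorem~\ref{Theorem. part 1} (already proved) upgrades slope semistability to slope stability for free. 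You instead cross the wall upward, classify the phase-$b_0$ subobjects of the non-split extension to get stability at $w_0+\epsilon$, and then invoke the upper bound on walls from Proposition~\ref{prop.the first wall} to rule out any further wall in $(w_0,\infty)$. This works --- the proof of \eqref{in. upper bound for x} does apply to any wall above $w_f$, not just the first one from large volume --- but your ``elementary classification'' step must also dispose of rank-$0$ subsheaves $G$ with $\nu\_H(G)=b_0$ (whose slope is constant in $w$, so they would persist above the wall); ruling these out needs an argument essentially identical to the paper's ``Slope stability'' paragraph. So your approach is more self-contained in that it does not invoke Theorem~\ref{Theorem. part 1} as a black box, but the paper's shortcut is what Theorem~\ref{Theorem. part 1} buys you: having already shown that slope semistable $\Rightarrow$ slope stable for class $v_n$, there is no need to re-run any wall-crossing.
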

\begin{proof}
By the same argument as in \eqref{staybul}, $I$ and $\cO_X(-n)[1]$ are $\nu_{b_0,w_0}$-stable of the same phase, where $w_0=\frac{n^2}4+\frac{(\beta\cdot H)^2}{(nH^3)^2}$. Hence the exact sequence
 	 	\begin{equation}\label{in. HN}
 	I \Into \cok(s) \Onto \cO_X(-n)[1]
 	\end{equation} 
 	 in $\cA(b_0)$ shows that $\cok(s)$ is also $\nu_{b_0,w_0}$-semistable. Therefore, it is also slope semistable: any quotient sheaf $\cok(s) \twoheadrightarrow F'$ is torsion, so lies in $\mathcal{A}(b_0)$ and satisfies
 	  	\begin{equation*}
  \nu\_{b_0,w_0}\left(\!\cok(s)\right)\ =\ \nu\_H\left(\!\cok(s)\right)\ \leq \ \nu\_H(F')\ =\ \nu\_{b_0,w_0}(F').
 	\end{equation*}
As we just proved in Theorem~\ref{Theorem. part 1}, this implies that $\cok(s)$ is actually slope stable.
\end{proof}
\begin{proof}[Proof of Theorem~\ref{theorem.1}]
   By Theorem~\ref{Theorem. part 1} and Proposition~\ref{prop.converse} we have now proved that any slope or Gieseker semistable sheaf $F$ of Chern character $\vi_n$ is slope (and so Gieseker) stable, and we have established a bijection
	\begin{align}\label{morph}
	\js_n(\vi)\times\Pic\_0(X)\ \To&\  M_{X,H}(\vi_n), \\
	\left((I, s),\,L \right)\ \longmapsto&\ \cok (s) \otimes L.  \nonumber
	\end{align}
Next we make the arrow into a morphism. By \cite[Theorem 4.11]{Le Potier} the product $\js_n(\vi)\times X$ carries a universal Joyce--Song pair. Tensoring with (the pull back of) a Poincar\'e sheaf on $X \times \Pic\_0(X)$ gives a universal complex on $\js_n(\vi)\times\Pic\_0(X)\times X$. Its cokernel is a flat family of sheaves over $\js_n(\vi)\times\Pic\_0(X)$ whose closed fibres are slope and Gieseker stable sheaves of Chern character $\vi_n$. It is therefore classified by a map to the moduli space $M_{X,H}(\vi_n)$, which gives \eqref{morph}. \medskip
    
We are left with finding the inverse morphism. Start with $F\in M_{X,H}(\vi_n)$. By Theorem~\ref{Theorem. part 1} we find a unique $L\in\Pic\_0(X)$ with non-zero $\Ext^1(F,L(-n))\cong\C$ defining an extension $I\in I_m(X,\beta)\times\Pic\_0(X)$,
\begin{equation}\label{extn}
0\To L(-n)\To I\To F\To0.
\end{equation}
As noted in the proof of Proposition~\ref{prop.converse}, $I$ and $L(-n)[1]$ are $\nu_{b_0,w_0}$-stable of the same phase, so $\Ext^1(I,L(-n))=0$. 
Therefore, applying $\Ext(\ \cdot\ ,L(-n))$ to \eqref{extn} gives
\begin{equation}\label{extgroups}
  \Ext^i(F,L(-n))\ =\
  \left\{\!\!\begin{array}{lcl}\C, && i=1, \\ 0, && i\le0\text{ or }i\ge4.\end{array}\right.
\end{equation}
We would like to do this in families, as the pairs $(F,L)$ move over $M_{X,H}(\vi_n)\times\Pic\_0(X)$. But $\Ext^1(F,L(-n))$ is the non-zero Ext group of lowest degree by \eqref{extgroups}, so basechange issues mean it does not show up in the relative $\ext$s of the family version. Instead, we use its Serre dual
$$
H^2(F\otimes L^*(n)\otimes K_X)\ \cong\ \Ext^1(F,L(-n))^*.
$$
To set up its family version \eqref{Edot} below, we let $\cF$ be a universal twisted sheaf\,\footnote{Working with twisted sheaves is no harder than working with ordinary sheaves; the formalism is set up in \cite{Ca}, for instance. Eventually we will be able to remove the twisting to make $\cF$ a coherent sheaf.}
 over $X\times M_{X,H}(\vi_n)$ and let $\cL$ be a Poincar\'e sheaf on $X\times\Pic\_0(X)$. Suppressing some obvious pull back maps for clarity and pushing forward along the map
 \begin{equation}\label{pie}
 X\times M_{X,H}(\vi_n)\times\Pic\_0(X)\xrightarrow{\ \pi\ }M_{X,H}(\vi_n)\times\Pic\_0(X),
 \end{equation}
we consider the twisted sheaf
\begin{equation}\label{Edot}
\cG\ :=\ R^2\pi_*\left(\cF\otimes\cL^*(n)\otimes K_X\right) \quad\text{on}\quad M_{X,H}(\vi_n)\times\Pic\_0(X).
\end{equation}
By Serre duality applied to \eqref{extgroups}, there are no higher-degree push down cohomology sheaves, so basechange applies to show that on restriction to any closed point $(F,L)\in\supp\cG$,
$$
\cG\big|_{(F,L)}\ =\ H^2(F\otimes L^*(n)\otimes K_X)\ \cong\ \Ext^1(F,L(-n))^*\ \cong\ \C.
$$
Therefore, $\cG$ is a (twisted) line bundle on its support $S_\cG$, where
$$
\xymatrix{S_\cG\,:=\,\supp\cG\ \ar@{^(->}[r]<-.2ex>^-\iota\ar[dr]& M_{X,H}(\vi_n)\times\Pic\_0(X) \ar[d]^p \\
& M_{X,H}(\vi_n)}
$$
is a set-theoretic section of $p$, \textit{i.e.}\ a single point in each $\Pic\_0(X)$ fibre over $M_{X,H}(\vi_n)$. We want to upgrade this statement to one about schemes instead of sets.

\begin{Lem} The support $S_\cG$ of\, $\cG$ is a section of $p$, so is scheme-theoretically isomorphic to $M_{X,H}(\vi_n)$.
\end{Lem}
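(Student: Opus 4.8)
The plan is to prove that the finite morphism $p|_{S_\cG}\colon S_\cG\to M_{X,H}(\vi_n)$ is an isomorphism; this is exactly the assertion that $S_\cG$ is a section of $p$, the section being $(p|_{S_\cG})^{-1}$ followed by projection to $\Pic\_0(X)$. I would first record that $p|_{S_\cG}$ is finite: $S_\cG$ is closed in $M_{X,H}(\vi_n)\times\Pic\_0(X)$ and the factor $\Pic\_0(X)$ is proper, so $p$ is proper, while the set-theoretic section property makes $p|_{S_\cG}$ quasi-finite; proper $+$ quasi-finite $=$ finite. Since all residue fields are $\C$ and the map is a bijection, it is also radicial. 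The whole content is therefore to show that $p|_{S_\cG}$ is unramified with reduced fibres: radicial $+$ unramified will make it a monomorphism, properness will upgrade this to a closed immersion, and a final flatness input will force the bijective closed immersion to be an isomorphism.

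The heart of the matter is the scheme-theoretic fibre of $S_\cG$ over a point $F$, i.e. the Zariski tangent space at the unique point $L$ of $S_\cG\cap(\{F\}\times\Pic\_0(X))$. Because $\cG=R^2\pi_*$ is the top non-vanishing pushforward, its formation commutes with base change, so this fibre is the jumping locus of $L'\mapsto\Ext^1(F,L'(-n))$ inside $\Pic\_0(X)$, and a tangent vector $v\in T_L\Pic\_0(X)=H^1(\cO_X)=\Ext^1\big(L(-n),L(-n)\big)$ is tangent to it precisely when the generator $\xi\in\Ext^1(F,L(-n))\cong\C$ of \eqref{extn} survives to first order, that is when the Yoneda product $v\cdot\xi\in\Ext^2(F,L(-n))$ vanishes. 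Now I would observe that the map
\begin{equation*}
\delta\colon\ \Ext^1\big(L(-n),L(-n)\big)\ \To\ \Ext^2\big(F,L(-n)\big),\qquad v\ \longmapsto\ v\cdot\xi,
\end{equation*}
is exactly the connecting homomorphism obtained by applying $\Hom(\,\cdot\,,L(-n))$ to the triangle $L(-n)\to I\to F\xrightarrow{\ \xi\ }L(-n)[1]$. Its kernel is the image of $\Ext^1(I,L(-n))$, which vanishes because $I$ and $L(-n)[1]$ are $\nu_{b_0,w_0}$-stable of the same phase, as in \eqref{staybul}. Hence $\delta$ is injective, the tangent space to the fibre is $0$, the fibre is a single reduced point, and $p|_{S_\cG}$ is unramified. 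With radicial this gives a monomorphism, and with properness a closed immersion $S_\cG\hookrightarrow M_{X,H}(\vi_n)$.

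It remains to turn this bijective closed immersion into an isomorphism, which is the step I expect to be the main obstacle: a single reduced point in each closed fibre only yields $S_\cG\cong(M_{X,H}(\vi_n))_{\mathrm{red}}$, and over the possibly non-reduced moduli space one needs flatness of $p|_{S_\cG}$ to conclude. To supply it I would use that $\cG$ is a (twisted) line bundle on $S_\cG$ to form the universal extension $0\to\cL(-n)\otimes\cG^\vee\to\cI\to\cF\to0$ on $X\times S_\cG$, giving a flat family of torsion-free sheaves of class $\vi$ together with a Joyce--Song section, hence a morphism $S_\cG\to\js_n(\vi)\times\Pic\_0(X)$ whose composite with \eqref{morph} is $p|_{S_\cG}$. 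Comparing the deformation theory of $S_\cG$ with that of $M_{X,H}(\vi_n)$ through this factorisation --- again controlled by the exact sequence that produced $\delta$ --- identifies the two scheme structures and makes $p|_{S_\cG}$ flat, so the closed immersion is open-and-closed and therefore, being bijective, an isomorphism. The one genuinely new ingredient throughout is the vanishing $\Ext^1(I,L(-n))=0$ forced by stability: it is what makes the jumping transverse rather than merely set-theoretic.
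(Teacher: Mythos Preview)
Your first half --- showing that $p|_{S_\cG}$ is finite, radicial, and unramified, hence a closed immersion --- is correct and is essentially the paper's argument: both compute that the Zariski tangent space to the fibre of $S_\cG$ over $F$ is the kernel of $\cup\,e\colon\Ext^1(L(-n),L(-n))\to\Ext^2(F,L(-n))$, and both use $\Ext^1(I,L(-n))=0$ (from stability, \eqref{staybul}) to see this kernel vanishes.

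The gap is in your final paragraph. Factoring $p|_{S_\cG}$ through $\js_n(\vi)\times\Pic\_0(X)$ via a universal extension over $S_\cG$ does not by itself yield flatness: the composite is still only the closed immersion you started with, and \eqref{morph} is at this point merely a bijective morphism, so the factorisation gives no new leverage. ``Comparing deformation theories'' would require showing $dp|_{S_\cG}$ is \emph{surjective} onto $\Ext^1(F,F)$, and then controlling all higher-order obstructions --- neither of which follows from the injectivity of $\delta$ alone. The paper supplies this missing content with a direct inductive argument on fat points $M_k=\mathrm{Spec}(\cO_M/\m^k)$: given a section $M_k\to S_\cG$ (equivalently, a family $(L_k,e_k)$), one first extends $L_k$ arbitrarily to some $L_{k+1}$ over $M_{k+1}$, computes the obstruction $\mathrm{ob}\in\Ext^2(F,L(-n))\otimes\m^k/\m^{k+1}$ to extending $e_k$, and then \emph{adjusts} $L_{k+1}$ by the preimage of $-\mathrm{ob}$ under $\cup\,e$ so that the new obstruction vanishes. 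This step needs the \emph{surjectivity} of $\cup\,e$ (equivalently $\Ext^2(I,L(-n))=0$, which holds for $n\gg0$), not just its injectivity. That is the mechanism your proposal lacks; once you have it, $p|_{S_\cG}$ is surjective on every infinitesimal thickening and hence an isomorphism, and indeed the closed-immersion reduction becomes unnecessary.
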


\begin{proof} We first prove $S_\cG\to M_{X,H}(\vi_n)$ is an embedding by showing that the fibre over any closed point $F\in M_{X,H}(\vi_n)$ is a \emph{reduced} point $L\in\Pic\_0(X)$. Here $L$ is the unique line bundle such that $\Ext^1(F,L(-n))$ is non-zero. Let $e$ be a generator of $\Ext^1(F,L(-n))$, defining the extension \eqref{extn}. Applying $\Ext^*(\ \cdot\ ,L(-n))$ to \eqref{extn} gives an isomorphism
\begin{equation}\label{isomorphi}
\Ext^1\!\left(L(-n),L(-n)\right)\xrightarrow[\cup e]{\ \isolow\ }\Ext^2\!\left(F,L(-n)\right)
\end{equation}
because $\Ext^{\le2}(I,L(-n))=0$ for $I\in I_m(X,\beta)\times\Pic\_0(X)$ and $n\gg0$.
This map takes any first-order deformation of $L$ in $\Pic\_0(X)$ to the obstruction (in the right-hand group) to deforming $e\in\Ext^1_X(F,L(-n))$ with it. Thus $e$ is totally obstructed --- it does not deform to first order with $L$. That is, the Zariski tangent space to the fibre of $S_\cG$ over $\{F\}$ --- the kernel of \eqref{isomorphi} --- is trivial. This shows that $S_\cG\to M_{X,H}(\vi_n)$ is an embedding. \medskip

To prove $S_\cG\to M_{X,H}(\vi_n)$ is an isomorphism of schemes, it is now sufficient to show its basechange to any fat point of $M_{X,H}(\vi_n)$ is an epimorphism. That is, take the maximal ideal $\m\subset\cO_{M_{X,H}(\vi_n)}$ at $F$, set
$$
M_k\ :=\ \Spec\left(\cO/\m^k\right),
$$
and assume inductively that we have proved $S_k:=S_\cG\times\_{M_{X,H}(\vi_n)}M_k\to M_k$ is an epimorphism (and so an isomorphism). We want to show the same is true for $k+1$.

Let $F_k,\,L_k$ be the restrictions of the universal sheaves to $X\times M_k\times\Pic\_0(X)$. By our inductive assumption and basechange, we know that $\cG|_{S_k}$ is a line bundle on $S_k$. By relative Serre duality down $\pi_k$ --- the basechange of $\pi$ \eqref{pie} to $S_k$ --- its dual is the line bundle
$$
\ext^1_{\pi_k}\!\left(F_k,L_k(-n)\right)\quad\text{on}\quad S_k\,\cong\,M_k, 
$$
which therefore has a trivialising section $e_k\in\Ext^1(F_k,L_k(-n))$ defining an extension
\begin{equation}\label{Ik}
0\To L_k(-n)\To I_k\To F_k\To 0.
\end{equation}
Since $\Pic\_0(X)$ is smooth, the classifying map $M_k\to\Pic\_0(X)$ of $L_k$ can be extended to $M_{k+1}\to\Pic\_0(X)$, thus defining a preliminary $L_{k+1}$ over $X\times M_{k+1}$. We already have $F_{k+1}:=\cF|_{X\times M_{k+1}}$. The obstruction to extending the extension class $e_k$ to any
$$
e_{k+1}\ \in\ \Ext^1_{X\times M_{k+1}}\!\left(F_{k+1},L_{k+1}(-n))\right)
$$
is a class ob in
\begin{equation}\label{key}
\Ext^2_{X\times M_k}\!\left(F_k,L_k(-n)\otimes\frac{\m^k}{\m^{k+1}}\right)\xleftarrow[\ \cup e_k]{\isolow}\Ext^1_{X\times M_k}\!\left(L_k(-n),L_k(-n)\otimes\frac{\m^k}{\m^{k+1}}\right).
\end{equation}
Here the isomorphism follows from applying $\Ext^*\!\big(\,\ \cdot\,\ ,L_k(-n)\otimes(\m^k/\m^{k+1})\big)$ to \eqref{Ik}; \textit{cf.}~\eqref{isomorphi}. Now the space of choices of $L_{k+1}$ extending $L_k$ (\textit{i.e.}\ maps $M_{k+1}\to\Pic\_0(X)$ extending the given map from $M_k$) is a torsor over the right-hand group of \eqref{key}. Therefore, the class of $(-\,$ob) in this group defines a new $L_{k+1}$ for which the obstruction to the existence of $e_{k+1}$ now vanishes. This $e_{k+1}$ then trivialises
$$
\ext^1_{\pi_{k+1}}\left(F_{k+1},L_{k+1}(-n)\right)\quad\text{on}\quad M_{k+1},
$$
showing it is a line bundle and that we have defined an extension $S_{k+1}\subset S_\cG$ of $S_k\subset S_\cG$. Thus $S_\cG\to M_{X,H}(\vi_n)$ is an epimorphism after basechange to $M_{k+1}$, as required.
\end{proof}

By basechange $\cG=\iota_*\cT$, where $\cT$ is the twisted line bundle
$$
R^2\left(\pi|\_{X\times S_\cG}\right)_*\left((\cF\otimes\cL^*(n)\otimes K_X)\big|_{X\times S_\cG}\right) \quad\text{on}\quad S_\cG\,\cong\,M_{X,H}(\vi_n).
$$
Denote the composition of $\iota$ with the projection to $\Pic\_0(X)$ by $f\colon M_{X,H}(\vi_n)\to\Pic\_0(X)$, and let \linebreak $\pi'\colon X\times M_{X,H}(\vi_n)\to M_{X,H}(\vi_n)$ be the projection. Identifying $S_\cG$ with $M_{X,H}(\vi_n)$, the above becomes
$$
\cT\ =\ R^2\pi'_*\left(\cF\otimes f^*\cL^*(n)\otimes K_X\right) \quad\text{on}\quad M_{X,H}(\vi_n).
$$
So replacing $\cF$ by $\cF\otimes(\pi')^*\cT^*$ gives a new universal \emph{sheaf} (the twistings cancel) such that, by relative Serre duality down $\pi'$,
$$
\ext^1_{\pi'}\left(\cF,f^*\cL(-n)\right)\ \cong\ \cO_{M_{X,H}(\vi_n)}.
$$
The section $1\in\Gamma(\cO)$ defines an extension 
$$
0\To f^*\cL(-n)\To\cI\To\cF\To0.
$$
Since $\cI\otimes f^*\cL^*$ is flat over $M_{X,H}(\vi_n)$, we get a family of Joyce--Song pairs classified by a map $M_{X,H}(\vi_n)\to\js_n(\vi)$. By construction, its product with $f$ is the inverse of \eqref{morph}.
\end{proof}

\section{Relationship to the work of Toda}\label{related}

This paper, its predecessor \cite{FT} and its sequel \cite{F22} use methods pioneered by Yukinobu Toda. In \cite{TodaBG} he also studied 2-dimensional sheaves on threefolds $X$ satisfying the Bogomolov--Gieseker inequality, under the additional assumption that $X$ is Calabi--Yau with Pic$\,X=\Z$. Like us, he starts in the large volume region and then moves down a vertical line in the space of weak stability conditions to find walls of instability by applying the Bogomolov--Gieseker inequality to weakly semistable objects. In this way, he gave a mathematical formulation and proof of Denef--Moore's version \cite{DM} of the famous OSV conjecture \cite{OSV}.

In our work we move down the same vertical line $\{b=b_0\}$ but diverge from Toda’s method in two main ways: 
\begin{itemize}
    \item Toda uses $\Pic(X)=\Z$ and the Bogomolov--Gieseker inequality at the point $(b_0,w_0)$ of $\ell\cap U$ to constrain the Chern characters of the destabilising objects $F_1,\,F_2$. Instead, we employed a wall-crossing argument to analyse $F_1,\,F_2$ along $\ell\cap U$, using the fact that they stay in $\cA(b)$ to constrain $\ch(F_i)$ (Proposition~\ref{prop.the first wall}). Further, we then moved down $\{b=b'\}$, showing $F_1$ remains semistable to apply the Bogomolov--Gieseker inequality to it at $(b^*,w^*)$ as in \eqref{b*w*}. This gave a stronger bound for $\ch_3(F_1)$. A similar argument (replacing $b'=-\frac1{H^3}$ by $-n+\frac1{H^3}$ as in \cite[Section 8]{FT}) did the same for $\ch_3(F_2(n))$ (Propositions~\ref{prop. upper bound for ch_3(F_1)} and~\ref{prop. upper bound for ch_3(F_2)}). Together, these completely specified $\ch_2(F_1)\cdot H = \beta\cdot H$ (Proposition~\ref{prop. exact value of ch2}).
    \item In turn this allows us to show that \emph{all} semistable sheaves of class $v_n$ are destabilised by Joyce--Song pairs on \emph{the first wall}. Since Toda does not take $n\gg0$ as large as we do, he also has to analyse many subsequent walls.
\end{itemize}
As a result, our wall-crossing formula \eqref{Theorem 2 equality} of Theorem~\ref{Theorem 2} is much simpler than Toda’s. If we specialise his result to our situation by fixing his parameters $\xi=2,\,\mu = 12\frac{\beta\cdot H}{H^3} + \frac{2}{H^3}$ and taking $n \gg 0$ (while noting that his Conjecture 1.4 has now been proved, see~\cite{BBBJ}), his wall-crossing formula becomes the following.

\begin{Thm}[\textit{cf.}~\protect{\cite[Theorem 3.18]{TodaBG}}]\label{Theorem.toda.1}
Let $X$ be a smooth projective Calabi--Yau threefold such that $\Pic(X) = \mathbb{Z}\cdot H$ and Conjecture~\ref{conjecture} holds. Fix $n\gg0$, and let
\begin{equation}\label{coneC}
C\ :=\ \left\{(\beta_i, m_i)\,\in\,H_2(X) \oplus H_0(X)\ \colon\, \beta_i\cdot H \,\leq\,6 \beta\cdot H, \ \,\abs{m_i}\,<\,(6 \beta\cdot H +1)n  \right\}.
\end{equation}
Then $\Omega_{\vi_n}(X)$ is given by
\begin{equation}\label{toda's seri.2}
\sum_{\substack{(\beta_i, m_i) \,\in\, C,\ \beta_2-\beta_1 = \beta,\\m_1-m_2 -n \beta_1\cdot H = m}}
\hspace{-5mm} (-1)^{\chi(\vi(n)) - n \beta_1\cdot H -1}\left(\chi(\vi(n)) - n \beta_1\cdot H\right)I_{m_2, \beta_2}(X)\,P_{-m_1, \beta_1}(X). 
	\end{equation}
\end{Thm}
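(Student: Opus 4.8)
The plan is to obtain the stated formula as a direct specialisation of Toda's general wall-crossing result \cite[Theorem 3.18]{TodaBG}, rather than to reprove it. Toda works on a Calabi-Yau threefold with $\Pic(X)=\Z.H$ and, starting from the large-volume chamber, moves down a vertical line in the space of weak stability conditions, recording at each wall the jump in the Behrend-weighted count of weakly semistable objects. His statement is phrased through two auxiliary parameters $\xi$ and $\mu$ fixing the relevant Chern character. First I would nail down the dictionary between Toda's $(\xi,\mu)$ and our class $\vi_n$: the choice $\xi=2$ and $\mu=12\frac{\beta.H}{H^3}+\frac{2}{H^3}$ matches the numerical data so that Toda's object of interest is a semistable sheaf of class $\vi_n$ and his wall-crossing invariant is $\Omega_{\vi_n}(X)$. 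Since Toda's Conjecture~1.4 — needed to make all the counts well defined and to run the Joyce--Song machinery — is now a theorem \cite{BBBJ}, the only conjectural hypothesis that survives is Conjecture \ref{conjecture}, which is assumed.

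Next I would read off the shape of the right-hand side. Across each wall a semistable object of class $\vi_n$ is destabilised by an exact triangle whose two factors are an ideal-sheaf-type object, contributing a DT invariant $I_{m_2,\beta_2}(X)$, and a stable-pair-type object, contributing a Pandharipande--Thomas invariant $P_{-m_1,\beta_1}(X)$. Conservation of the Chern character along the triangle produces exactly the two linear constraints $\beta_2-\beta_1=\beta$ and $m_1-m_2-n\beta_1.H=m$ carried under the summation. The combinatorial factor $(-1)^{\chi(\vi(n))-n\beta_1.H-1}\big(\chi(\vi(n))-n\beta_1.H\big)$ I would identify as the signed (Behrend-weighted) Euler characteristic of a projective space $\PP^{\,\chi(\vi(n))-n\beta_1.H-1}$ of extensions realising the destabilising triangle: this is the analogue of the $\PP^{\,\chi(v(n))-1}$ fibre of Theorem \ref{theorem.1}, shifted by the $-n\beta_1.H$ coming from Riemann--Roch applied to the $\beta_1$-piece, and indeed $(-1)^d(d+1)$ for $d=\chi(\vi(n))-n\beta_1.H-1$ reproduces the displayed sign and multiplicity.

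It then remains to justify the finite index set. I would argue that for $n\gg0$ only the pairs $(\beta_i,m_i)$ lying in the cone $C$ of \eqref{coneC} can support a nontrivial wall. The bound $\beta_i.H\le 6\beta.H$ comes from the positivity constraints forcing the destabilising factors to have bounded divisor class, where the specific numerical input $\xi=2$ and the value of $\mu$ enter; the bound $\abs{m_i}<(6\beta.H+1)n$ follows from bounding $\ch_3$ of each factor along the relevant range of walls, an estimate of exactly the type carried out in Propositions \ref{prop. upper bound for ch_3(F_1)} and \ref{prop. upper bound for ch_3(F_2)}. Outside $C$ the corresponding moduli are empty, so the sum is finite.

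I expect the main obstacle to be bookkeeping rather than any conceptual difficulty: faithfully translating Toda's sign and orientation conventions, his normalisation of the wall-crossing coefficients, and his parametrisation of the stability line into the conventions used here, and then verifying that $C$ captures precisely the contributing walls for $n\gg0$, neither dropping a boundary term nor retaining an empty one. In particular one must confirm that the two families of factors genuinely are the DT and PT moduli — so that the invariants appearing really are $I$ and $P$ — which ultimately rests on the rank-one-on-support phenomenon established in Theorem \ref{theorem.1}.
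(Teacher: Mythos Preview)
Your approach is essentially what the paper does: the theorem is not proved here but is simply \emph{stated} as the specialisation of \cite[Theorem 3.18]{TodaBG} obtained by setting $\xi=2$, $\mu=12\frac{\beta.H}{H^3}+\frac{2}{H^3}$ and taking $n\gg0$, with the remark that Toda's Conjecture~1.4 is now a theorem by \cite{BBBJ}. So your plan --- nail down the dictionary of parameters and read off the formula --- is exactly right, and your unpacking of the destabilising triangle, the linear constraints, and the signed projective-space factor is a reasonable sketch of what happens inside Toda's argument.

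There is one genuine misconception, however. Your last sentence claims that identifying the two factors as DT- and PT-type objects ``ultimately rests on the rank-one-on-support phenomenon established in Theorem~\ref{theorem.1}.'' It does not. Toda's theorem is logically prior to, and independent of, the results of this paper; his identification of the wall-crossing factors is carried out entirely within \cite{TodaBG} using his own analysis (under the stronger hypothesis $\Pic X=\Z$). Similarly, the bounds defining the cone $C$ come from Toda's estimates, not from Propositions~\ref{prop. upper bound for ch_3(F_1)} and~\ref{prop. upper bound for ch_3(F_2)} of this paper. The point of Section~\ref{related} is precisely the \emph{contrast}: the present paper's Theorem~\ref{theorem.1} gives a much simpler description of $M_{X,H}(v_n)$ for $n\gg0$, so that the single-term formula of Theorem~\ref{Theorem 2} replaces Toda's many-term sum \eqref{toda's seri.2}; one can then check, a posteriori, that the extra terms in Toda's sum vanish for $n\gg0$ via Castelnuovo-type emptiness bounds. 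So drop the circular reference to Theorem~\ref{theorem.1} and attribute the finiteness and the identification of factors to Toda's own work.
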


Here $P_{m,\beta}(X)$ is the stable pairs invariant, see \cite{PT}, the degree of the virtual cycle on the moduli space $P_m(X,\beta)$ of stable pairs  $(F,s)$ with $\chi(F)=m$ and $[F]=\beta$.

Toda pointed out to us how \eqref{toda's seri.2}
 can be made compatible with our simplification \eqref{Theorem 2 equality}. By another application of the Bogomolov--Gieseker-type inequality one can prove Castelnuovo-type bounds to show that $P_k(X,\beta_1)$ and $I_k(X,\beta_2)$ are empty for $k$ sufficiently small. Since the bounds $\beta_i\cdot H\le 6\beta\cdot H$ in the definition \eqref{coneC} of the cone $C$  are independent of $n$, we can therefore choose a uniform $n\gg0$ so that each term in the sum \eqref{toda's seri.2} has at least one of $P_{-m_1}(X,\beta_1)$ or $I_{m_2}(X,\beta_2)$ empty for $m_1-m_2=n\beta_1\cdot H+m$ (unless $\beta_1=0=m_1$). This would give another (ultimately lengthier) proof of Theorem~\ref{Theorem 2} when $X$ is a Calabi--Yau threefold with $\Pic=\Z$.
 
 In \cite{F22} the first author extends our methods and Toda’s to prove an OSV-like result for general Calabi--Yau threefolds, without the $\Pic(X) = \Z$ condition.

\section{Modularity}\label{modular}

On Calabi--Yau threefolds, the invariants $\Omega_{v_n}(X)$ are expected to have modular properties. There are two points of view on this:  one physical (``S-duality'') and one mathematical (Noether--Lefschetz theory). We describe these now on a Calabi--Yau threefold $X$ with $H^1(\cO_X)=0$ and $H^2(X,\Z)_{\tors}=0$ for simplicity.

\subsection{S-duality}\label{S-duality}
Physicists have long conjectured that counts of D4-D2-D0 branes should have modular properties; see \cite{MSW, GSY, al, DM}. In \cite{GSY} the proposal was to use Gieseker stable sheaves, \textit{i.e.}\ the invariants $\Omega(v_n):=\Omega_{v_n}(X)$. Some suggestive examples on the quintic threefold were calculated and shown to be compatible with the conjecture in \cite{GY}. Over time the conjecture has evolved somewhat; see \cite{AMP} for the state of the art (and extension to refined counting invariants). It is now expected that one should replace Gieseker stability by stability at the ``\emph{large volume attractor point}'' for the charge $v_n$. Here the central charge of $E$ can be found by pairing with minus the exponential of minus the complexified K\"ahler form of \cite[Equation 2.6]{AMP}, giving 
$$
\frac12\lambda^2n^2H^2.\ch_1(E)+i\lambda\left(\!\ch_2(E)\cdot nH-\ch_1(E).\left(\beta+\frac12n^2H^2\right)\right) +o(\lambda)
$$
to leading order in their parameter $\lambda\to\infty$. After scaling and adding a constant, this corresponds to the slope function
$$
\frac{\ch_2(E)\cdot H}{\ch_1(E)\cdot H^2}\,-\,\frac1n\cdot\frac{\ch_1(E).\beta}{\ch_1(E)\cdot H^2}\,.
$$
As $n\to\infty$ with $E$ fixed, this tends to $\nu\_H(E)$, defined in \eqref{nuslope}, and by Theorem~\ref{theorem.1} $M_{X,H}(v_n)$ is precisely the moduli space of $\nu\_H$-stable sheaves. Furthermore, there are no strictly $\nu\_H$-semistable sheaves, so we can perturb $\nu\_H$ a little without changing this result. However, the sheaves whose stability we test also depend on $n$, so this argument is suggestive but not a proof that sheaves in $M_{X,H}(v_n)$ might be ``attractor stable'' (and describe \emph{all} attractor semistable sheaves of class $v_n$) for large $n$. So we might expect the invariants $\Omega(v_n)$ to be the ``MSW invariants'' of \cite{MSW, AMP}. (We return to this point in Remark~\ref{rmk}.)

Although Gieseker stability is not always preserved by tensoring by a line bundle, slope stability is. Therefore, by Theorem~\ref{theorem.1}, for $n\gg0$ we have
\begin{equation}\label{invariance}
\Omega(v_n)\ =\ \Omega\left(e^\ell v_n\right)\,\text{ for all }\,\ell\in H^2(X,\Z),
\end{equation}
where $e^\ell v_n$ is the cup product of $e^\ell\in H^*(X,\Q)$ with $v_n$. Note that $e^\ell v_n$ has the same $H^2$ class $nH$ as $v_n$, but $H^4$ class 
$$
-\beta-\frac12n^2H^2+nH\cdot\ell.
$$
Therefore, the invariance \eqref{invariance} shows the data of all invariants $\Omega(v_n)$, over all $\beta$ and $m$ (for fixed $n\gg0$),\footnote{Since we choose $n\gg0$ only after fixing $m$, we may need to truncate our generating series, considering bounded $m\le M(n)$ for a given $n$. We return to this issue in Remark~\ref{rmk}.} is in fact captured in the smaller set of invariants $\Omega(0,nH,\ch_2,\ch_3)$ for
\begin{equation}\label{H4H2}
\ch_2+\frac12n^2H^2\ \in\ \frac{H^4(X,\Z)}{nH\cup H^2(X,\Z)}\ =:\ \Gamma\,.
\end{equation}
The group $\Gamma$ is finite by the hard Lefschetz isomorphism $\cup\,nH\colon H^2(X,\Q)\xrightarrow\isolow H^4(X,\Q)$. We let $\beta/nH$ denote the inverse image of $\beta$ under this map. Therefore, all the enumerative information can be encoded in the vector of generating series
\begin{equation}\label{modu}
\bigoplus_{\beta\in\Gamma}h_{nH,\beta}(q), \quad h_{nH,\beta}(q)\ :=\ \sum_{\widehat m}\Omega\left(0,nH,-\beta-\frac12n^2H^2,-m+\frac16n^3H^3\right)q^{\widehat m},
\end{equation}
where $\widehat m$ is the following normalisation of $\ch_3$: 
\begin{equation}\label{mhat}
\widehat m\ :=\ m+\frac12nH\cdot\beta-\frac1{24}nH\cdot c_2(X)-\frac1{24}n^3H^3+\frac12\int_X\frac{\beta}{nH}\cup\beta,
\end{equation}
which is easily checked to be invariant under $v_n\mapsto e^\ell v_n$. The series \eqref{modu} are the product of Laurent series in $q$ with a prefactor $q^c,\,c\in\Q$. Setting $q=e^{2\pi i\tau}$, we think of them as meromorphic functions of $\tau$ in the upper half plane. In \cite{GSY}, \eqref{modu} was conjectured to be a vector-valued modular form of weight $-b_2(X)-\frac12$.
This is now expected to be true only for irreducible $\ch_1$, which is far from our case of $\ch_1=nH$.

For more general $\ch_1$, the current expectation is that \eqref{modu} should be a vector-valued \emph{mock modular form} of depth $k-1$, where $k$ is the maximum over all non-trivial decompositions
\begin{equation}\label{DDk}
D\=D_1+\dots+D_k\quad\text{ for all divisors }\,D\in|\cO(n)|.
\end{equation}
That is, it should admit a non-holomorphic modular completion $\bigoplus_{\beta\in\Gamma}\widehat h_{nH,\beta}(q)$ made from $k-1$ iterated Eichler integrals involving the functions 
$h_{[D_1],\beta_1}(q),\dots,h_{[D_k],\beta_k}(q)$. Explicit formulae for the $\widehat h$ in terms of $h$ are given in \cite[Equation 2.11]{AMP} and inverted to express $h$ in terms of $\widehat h$ in \cite[Equation 2.15]{AMP}. Under the modular group, the $\widehat h$ should transform as in \cite[Equation 2.10]{AMP} with weight $-\frac12b_2(X)-1$. That is, $\widehat h_{nH,\beta}(-1/\tau)$ should be
$$
-\frac{(-i\tau)^{-\frac12b_2(X)-1}}{\sqrt{|\Gamma|}}\exp\left(-2\pi i\left(\frac14 n^3H^3-\frac18c_2(X)\cdot nH\right)\right)\sum_{\gamma\in\Gamma}\exp\left(-2\pi i\int_\gamma\frac{\beta}{nH}\right)\,\widehat h_{nH,\gamma},
$$
and
$$
\widehat h_{nH,\beta}(\tau+1)\ =\ \exp\left(2\pi i\left(\frac1{24}c_2(X)\cdot nH+\frac12\int_{\beta}\,\frac{\beta}{nH}+\dfrac12\beta\cdot nH+\dfrac18n^3H^3\right)\right)\,\widehat h_{nH,\beta}(\tau).
$$

It is further predicted that, apart from their poles of order $\frac1{24}((nH)^3+c_2(X)\cdot nH)$ at $q=0$, the functions $h$ and $\widehat h$ are bounded. Since they are vectors of length $\dim\Gamma=nH^3$ and have modular weight $-b_2(X)/2-1$, the dimension of the relevant space of (mock) modular forms can be analysed; see \cite{Man}. In our case its dimension works out as $O(n^4)$, so the first $O(n^4)$ Fourier coefficients $\Omega(v_n)$ should determine the rest.

The hope would be to use this as a method for determining the MNOP invariants $I_{m,\beta}(X)$ for $m>O(n^4)$ in terms of those with smaller $m$. This does not currently work in general because once $m$ becomes large, we have to increase $n$ in Theorem~\ref{Theorem 2} to get the relationship between $\Omega(v_n)$ and $I_{m,\beta}(X)$. We would need the bound $n\gg0$ required in Theorem~\ref{Theorem 2} to be improved to $n>O(m^{1/4})$ or better. But very recent work \cite{AFKPS} manages to improve the bounds for small $m,n$ in specific examples and thus use modularity to calculate new Gromov--Witten invariants.

\subsection{Noether--Lefschetz theory}\label{N-L theory}
Here we flesh out a suggestion of Davesh Maulik to explain, or perhaps one day prove, the modularity properties of the generating series of invariants $\Omega(v_n)$ directly. We thank Luis Garcia for his insight and generous expert assistance with this section.

Again let $X$ be a Calabi--Yau threefold with $H^1(\cO_X)=0$, and again we work with bounded $m$ and then large $n\gg0$. We return to this point in Remark~\ref{rmk}.  By Theorem~\ref{theorem.1} all sheaves in $M_{X,H}(v_n)$ are of rank 1 on their scheme-theoretic support, and that support is a divisor $D\in|\cO(n)|$. The generic $\iota\colon D\into X$ is smooth and supports precisely the stable sheaves
\begin{equation}\label{sheef}
\iota_*(L\otimes\cI_Z),
\end{equation}
where $L$ is a line bundle on $D$ and $Z\subset D$ is a 0-dimensional subscheme.
The existence of $L$ means $D$ lies in one of the Noether--Lefschetz loci $NL_{d,\beta}\subset|\cO(n)|$ of divisors containing an integral $(1,1)$ class $\ell:=c_1(L)$ such that $\iota_*\hspace{.6pt}\ell=\beta$ and the discriminant of the sublattice $\,\langle\ell,h\rangle\subset H^2(D,\Z)$ is $d$. Here $h:=H|_D$ and
\begin{equation}\label{discri}
d\=\disc\langle\ell,h\rangle\ :=\ h^2\ell^2-(h\cdot \ell)^2,
\end{equation}
where the intersections are taken on $D$. (Of course, $h^2$ and $h\cdot\ell$ can be expressed on $X$ as $nH^3$ and $H\cdot\beta$, respectively, but $\ell^2$ cannot be determined by its image $\beta=\iota_*\hspace{.6pt}\ell$ in $X$).	

We briefly review some Noether--Lefschetz theory. We suppose $H^2(X,\Z)=\Z\cdot H$ for simplicity. Set $\Lambda:=\langle h\rangle^\perp\subset H^2(D,\Z)$ to be the primitive cohomology. The Lefschetz theorems give $H^4(X,\Z)\cong H^2(D,\Z)/\Lambda$, which surjects onto $\Lambda^*/\Lambda$ by the unimodular intersection pairing on $D$. The kernel is $\langle h\rangle$, so we can describe the finite group $\Gamma$ from \eqref{H4H2} as
\begin{equation}\label{L*L}
\Gamma\ =\ \frac{H^4(X,\Z)}{nH\cup H^2(X,\Z)}\ \cong\ \frac{\Lambda^*\!}{\Lambda}\ \cong\ \Z/N\Z\,,
\end{equation}
where $N=nH^3$. Then, up to the action of $\Aut\big(H^2(D,\Z),h\big)$, the data of the 2-dimensional sublattice $\langle \ell,h\rangle\subset H^2(D,\Z)$ is equivalent to the data of its discriminant disc and its \emph{coset} --- the image of $\ell$ in the quotient of the group $\Lambda^*/\Lambda$ from \eqref{L*L} by $\pm1$.

We have a map $\Phi_n\colon D\mapsto X_
\Lambda$ from the open set $|\cO(n)|^\circ$ of smooth divisors $D$ to the moduli space\footnote{Here $X_\Lambda$ is the quotient of the period domain by $\Aut(H^2(D,\Z),h)=\ker(\Aut\Lambda\to\Aut(\Lambda^*/\Lambda)).$} of weight 2 polarised Hodge structure of signature $(h^2(\cO_D),h^{1,1}(D)-1)$.  This moduli space $X_
\Lambda$ contains universal Noether--Lefschetz loci\footnote{These are called Hodge loci in the paper \cite{Ga}, which extends results of Borcherds and Kudla--Millson from hermitian symmetric spaces to the period domains of interest to us.} $\mathsf{NL}_{d,\gamma}$ consisting of Hodge structures on $\Lambda$ admitting a $(1,1)$ vector in $\Lambda^*$ of square $d/h^2$ and coset $\gamma$. (The link to 2-dimensional sublattices $\langle\ell,h\rangle\subset H^2(D,\Z)$ takes $\ell\in H^2(D,\Z)$ to its projection $\ell-\frac{\ell\cdot h}{h^2}h\in\Lambda^*$ orthogonal to $h$. This has square $\ell^2-\frac{(h\cdot\ell)^2}{h^2}=\frac d{h^2}$, where $d$ is the discriminant \eqref{discri}.)

Since the dimension of $|\cO(n)|^\circ$ matches the codimension of the Hodge loci $\mathsf{NL}_{d,\gamma}$,
$$
\dim|\cO(n)|^\circ\=h^0(\cO_X(n))-1\=h^2(\cO_D)\=\codim\mathsf{NL}_{d,\gamma},
$$
we could imagine defining their intersection by pulling back the Thom forms of the Hodge loci constructed in \cite{Ga} and integrating over $|\cO(n)|^\circ$. Below we will come back to the obvious problems of non-compactness of the Hodge loci in showing such integrals converge; for now we ignore them and just work with smooth $D$ in the interior of the period domain.

The constraints of Griffiths transversality mean we can probably never expect the intersection of $\Phi_n|\cO(n)|^\circ$ and $\mathsf{NL}_{d,\gamma}$
to be of the correct dimension 0. However, in order to formulate a conjecture, one can imagine perturbing the complex structure on $X$ to a non-integrable almost complex structure (compatible with the symplectic structure dual to $H$) to ensure the intersection is 0-dimensional. Since the virtual-dimension-0 deformation theory of $\iota_*L$, or of the pair $(D,L)$, can be matched with the deformation theory of the intersection (see \cite[Section 2.1]{KT1}, for instance), we would, as usual, expect to be able to avoid such non-algebraic deformations by working \textit{in situ} with the virtual cycle, yielding the same intersection numbers via excess intersection.


So we imagine $\Phi_n^*[\mathsf{NL}_{d,\gamma}]$ reduced isolated intersection points $(D,L)$ with an extra Hodge class $\ell=c_1(L)$ of discriminant disc$\,\langle\ell,h\rangle=d$ and coset $\gamma$. Each such point would generate a component of a moduli space $M_{X,H}(v_n)$ given by
\begin{equation}\label{hilbk}
\Hilb^kD\ \cong\ \left\{L\otimes\cI_Z\ \colon\,|Z|=k\right\}
\end{equation}
parameterising the sheaves \eqref{sheef}. Here we take the charge $m$ in $v_n$, see \eqref{vn}, to be
$$
m\ =\ |Z|+\frac12\left(\beta\cdot nH-\ell^2\right)\ =\ k+\frac12\beta\cdot nH-\frac{(H\cdot\beta)^2}{2nH^3}-\frac d{2h^2},
$$
by calculating $\ch_3=\frac16n^3H^3-m$ of \eqref{sheef}. Taking the Euler characteristic of \eqref{hilbk}, weighting by $q^{\widehat m}$, with $\widehat m$ as in \eqref{mhat}, and summing gives, by the G\"ottsche formula, the generating series
\begin{equation}\label{genser}
q^{c-\frac d{2(h)^2}}\left(q^{-\frac1{24}}\prod_{\,i=1}^\infty\frac1{(1-q^k)}\right)^{\!e(D)}=\ q^{c-\frac d{2h^2}\,}\eta(q)^{-e(D)},
\end{equation}
where $e(D)$ is the topological Euler characteristic of any smooth member $D\in|\cO(n)|$,
$$
e(D)\ =\ c_2(X)\cdot nH+n^3H^3,
$$
and $c=nH\cdot\beta-\frac{(H\cdot\beta)^2}{2nH^3}+\frac12\int_\beta\frac\beta{nH}\in\Q$. (As in the last section, we should really truncate this sum over $m\le M(n)$ if we want to use Theorem~\ref{theorem.1} to identify $M_{X,H}(v_n)$ with unions of Hilbert schemes \eqref{hilbk}, but see Remark~\ref{rmk}.)

Summing \eqref{genser} over the divisors $D$, by summing over all discriminants and cosets in \eqref{L*L}, the vector of generating series \eqref{modu} becomes
\begin{equation}\label{final2}
q^c\eta(q)^{-e(D)}\bigoplus_{\gamma\in\Gamma}\,\sum_d\Phi_n^*\left[\mathsf{NL}_{d,\gamma}\right]\,q^{-\frac d{2h^2}}\,.
\end{equation}
Now $\eta(q)^{-e(D)}$ is modular of weight $-\frac12e(D)$, and  $\bigoplus_{\gamma\in\Gamma}\,\sum_d\big[\mathsf{NL}_{d,\gamma}\big]\,q^{-\frac d{2h^2}}$ is a vector-valued modular form (with values in the cohomology of the moduli space $X_{\Lambda}$ of Hodge structures) of weight $\frac12\dim H^2_{\prim}(D)=\frac12(e(D)-3)$, by \cite[Theorems 1.2 and 5.2]{Ga}.\footnote{Note that our $\frac d{2h^2}=\frac12\big(\ell-\frac{\ell\cdot h}{h^2}h\big)^2$, see \eqref{discri}, corresponds to Garcia's $\frac12\langle v,v\rangle$, see \cite[Theorem 1.2]{Ga}, on setting $v=\ell-\frac{\ell\cdot h}{h^2}h\in\Lambda^*$ to be the projection of $\ell$ to $\langle h\rangle^\perp\otimes\Q$.}

So we conclude that \eqref{final2} is modular of total weight $-\frac{3}{2}$ if we can make finite sense of $\Phi_n^*\big[\mathsf{NL}_{d,\gamma}\big]$. This will involve further work studying degenerations of Hodge structure at the boundary of the space of Hodge structures. It is natural to expect non-holomorphic corrections, turning modular forms into mock modular forms. One point of view is that the Thom forms of the Hodge loci are not precisely holomorphic --- taking $\dbar$ gives exact forms $da$ on the moduli space of Hodge structures, see \cite[Equation 4.39]{Ga}, which are therefore exact on pull back to $|\cO(n)|^\circ$ but may not be on the boundary of $|\cO(n)|$  (where $d(\Phi_n^*a)$ may have poles with non-zero residues). In a special case ($\Sym^2$ of the Hodge structures of elliptic curves), the boundary and convergence analysis was carried out successfully in \cite{Fu} and indeed found to give mock modular forms.

In particular, taking account of reducible and non-reduced $D$ at the boundary of $|\cO(n)|$ will necessarily add cross-terms involving all non-trivial decompositions
$$
D\=D_1+\dots+D_k\quad\text{ for all divisors }\,D\in|\cO(n)|.
$$
This is the same data  used in \eqref{DDk} to generate the non-holomorphic mock modular completions $\widehat h$ of the generating series $h$ from \eqref{modu}.
So it seems reasonable to expect the Noether--Lefschetz story to be compatible with, or one day even prove, S-duality. Gholampour and Sheshmani have been exploring example calculations along related lines in recent years; see for instance  \cite{GST, GS}.

\begin{Rem}\label{rmk}
In our modularity discussions of the last two sections, two issues have arisen which we consider to be related. In Section~\ref{S-duality} it was not clear we had the right stability condition for our invariants $\Omega(v_n)$ to be the MSW invariants. In both Sections~\ref{S-duality} and~\ref{N-L theory} there was the issue that our description of $M_{X,H}(v_n)$ in Theorem~\ref{theorem.1} was only valid for bounded $m\le M(n)$.

Given the discussion in this section, it seems natural to suggest the solution to both problems should be the following. We should take moduli spaces of sheaves $\iota_*(L\otimes I_Z)$ \eqref{sheef} \emph{for any $m$ and $n$}, and their invariants should have mock modular generating series. In other words, we expect that the physicists' attractor stable objects should be precisely the sheaves \eqref{sheef} with rank 1 on their support, independently of $m,n$. (When $D$ is non-reduced or irreducible, we should also use a stability condition on the line bundle $L$, probably $\nu\_H$-slope stability.) Their virtual counts would then be the MSW invariants. 

For small $m\le M(n)$ Theorem~\ref{theorem.1} gives precisely the sheaves $\iota_*(L\otimes I_Z)$ \eqref{sheef} with $L^*$ effective.\footnote{The notation is different; here $L^*$ is the line bundle corresponding to the divisorial part of $C$ in \eqref{form} and Theorem~\ref{theorem.1}. Since we are assuming $H^2(X,\Z)=\Z$, the line bundles $L,T$ in Theorem~\ref{theorem.1} are trivial.} For $m\in(M_1(n),M_2(n)]$, we find $M_{X,H}(v_n)$ parameterises sheaves of the same form $\iota_*(L\otimes I_Z)$, see \eqref{sheef},  but with $L=\cO_D(C_1-C_2)$ possibly non-effective; this is proved in \cite{TodaBG} when $\Pic X=\Z$ and \cite{F22} in general. For $m>M_2(n)$ we expect to have to change the stability condition to get a moduli space consisting of only the sheaves \eqref{sheef} and to get the MSW invariants.
\end{Rem}

\newcommand{\etalchar}[1]{$^{#1}$}

\end{document}